\definecolor{mygreen}{HTML}{43a047}
\newcommand{\Om}{\Omega}
\newcommand{\rhob}{\rho_{\textup{b}}}
\newcommand{\rhoa}{\rho_{\textup{a}}}
\newcommand{\Ca}{C_{\textup{a}}}
\newcommand{\Cb}{C_{\textup{b}}}
\newcommand{\kappaa}{\kappa_{\textup{a}}}
\newcommand{\Thetaa}{\Theta_{\textup{a}}}
\newcommand{\ddt}{\frac{\textup{d}}{\textup{d}t}}
\newcommand{\dt}{\, \textup{d} t}
\newcommand{\ds}{\, \textup{d} s }
\newcommand{\dx}{\, \textup{d} x}
\newcommand{\intO}{\int_{\Omega}}
\newcommand{\R}{\mathbb{R}} 
\newtheorem{theorem}{Theorem}
\newtheorem{lemma}{Lemma}
\newtheorem{proposition}{Proposition}
\newtheorem{assumption}{Assumption}
\newtheorem{remark}{Remark}
\numberwithin{lemma}{section}
\numberwithin{proposition}{section}
\numberwithin{theorem}{section}
\numberwithin{equation}{section}
\newcommand{\leqnomode}{\tagsleft@true}
\newcommand{\reqnomode}{\tagsleft@false}
\definecolor{grey}{rgb}{0.5,0.5,0.5}
\title[Westervelt--hyperbolic Pennes system]{The  Westervelt--Pennes--Cattaneo model: local well-posedness and singular limit for vanishing relaxation  time}      
\subjclass[2010]{35L70, 35K05}      
\keywords{ultrasonic heating, Westervelt's equation,   nonlinear acoustics, Pennes bioheat equation, Cannateo model,  HIFU}  
\author[I. Benabbas]{Imen Benabbas$^\dagger$}
\thanks{$^\dagger$AMNEDP Laboratory, Faculty of Mathematics,
	USTHB (\href{ibenabbas@usthb.dz}{ibenabbas@usthb.dz})}
\author[B. Said-Houari]{Belkacem Said-Houari$^\ddag$}
\thanks{$^\ddag$Department of Mathematics, College of Sciences, University of
	Sharjah, P. O. Box: 27272, Sharjah, United Arab Emirates    (\href{bhouari@sharjah.ac.ae}{bhouari@sharjah.ac.ae})}
\begin{document}
	\vspace*{8mm}
	\begin{abstract}
In this work, we investigate a mathematical model of nonlinear ultrasonic heating based on a coupled system of the Westervelt equation and the hyperbolic Pennes bioheat equation (Westervelt--Pennes--Cattaneo model). Using the energy method together with a fixed point argument, we prove that our model is locally well-posed and does not degenerate under a smallness assumption on the pressure data in the Westervelt equation.  In addition, we perform a singular limit analysis and show that the Westervelt--Pennes--Fourier model can 
be seen as an approximation of the Westervelt--Pennes--Cattaneo model as the relaxation parameter tends to zero. This is done by deriving uniform bounds of the solution with respect to the
relaxation parameter.   
					\end{abstract}   
	\vspace*{-7mm}  
	\maketitle                
     
\section{Introduction}       
We are interested in the analysis of a nonlinear thermo-acoustic system modeling the propagation of ultrasonic waves, such as the high--intensity focused ultrasound (HIFU), through thermoviscous fluids. With various applications including medical and industrial use in lithotripsy, thermotherapy, ultrasound cleaning, and sonochemistry \cite{hahn2018high, hsiao2016clinical, Jhang2009NonlinearUT, maloney2015emerging, zhou2011high}, the behavior of HIFU and the mathematical models describing it are receiving a great deal of attention from researchers. For instance, in medical procedures, focused ultrasound is used to generate localized heating that can destroy the targeted region. Indeed, this technique is proving its success in the treatment of both benign and malignant tumors \cite{ hahn2018high, maloney2015emerging}. Due to the high frequency of the sound waves, the nonlinear effect of their propagation cannot be eluded and it is well-established in nonlinear acoustics that Westervelt equation \cite{westervelt1963parametric} takes this effect into consideration.

In this paper we consider a coupled system  of the Westervelt equation for the pressure and a hyperbolic Pennes equation for the temperature. More precisely, we consider the system   
\begin{equation} 
\left\{ \label{coupled_problem_eq_Cattaneo}
\begin{aligned}
&p_{tt}-c^2(\bar{\Theta})\Delta p - b \Delta p_t = K(\bar{\Theta})\left(p^2\right)_{tt}, \qquad &\text{in} \ \Omega \times (0,T),\\
&\rhoa \Ca\bar{\Theta}_t +\nabla\cdot q + \rhob \Cb W(\bar{\Theta}-\Thetaa) = \mathcal{Q}(p_t), \qquad &\text{in} \ \Omega \times (0,T),
\\
&\tau q_t+q+\kappaa \nabla \bar{\Theta}=0, \qquad &\text{in} \ \Omega \times (0,T).
\end{aligned}
\right.
\end{equation}
Here the acoustic pressure and the temperature fluctuations are denoted respectively by $p$ and $\bar{\Theta}$; $c$ is the speed of sound and $b>0$ is the sound diffusivity. The function $K(\bar{\Theta})$ is allowed to depend on $\bar{\Theta}$ and is given by
$$K(\bar{\Theta})=\frac{\beta_{\text{acous}}}{\rho c^2(\bar{\Theta })},$$
where $\rho$ is the mass density and $\beta_{\text{acous}}$ is the parameter of nonlinearity.   
 The source term in the second equation $\mathcal{Q}(p_t)$ represents the acoustic energy absorbed by the tissue. The medium parameters $\rhoa, \Ca$ and $\kappaa$ stand, respectively, for the ambient density, the ambient heat capacity and the thermal conductivity of the tissue. The additional term $\rhob \Cb W(\bar{\Theta}-\Thetaa)$ accounts for the heat loss due to blood circulation, with  $\rhob, \Cb$ being the density and the specific heat capacity of blood, and $W$ expressing the tissue's volumetric perfusion rate measured in milliliters of blood per milliliter of tissue per second.

The second and the third equations in \eqref{coupled_problem_eq_Cattaneo} constitute the hyperbolic version of the Pennes equation 
\eqref{Hyperbolic_Pennes}, where the heat conduction is described by the Cattaneo law (or the Cattaneo--Maxwell law).  
We supplement \eqref{coupled_problem_eq_Cattaneo} with the initial conditions 
\begin{equation}\label{Initial_Condi}
p|_{t=0}=p_0,\quad p_t|_{t=0}=p_1,\quad \bar{\Theta}|_{t=0}=\bar{\Theta}_0,\quad q|_{t=0}=q_0
\end{equation}
and Dirichlet boundary conditions  
\begin{equation} \label{coupled_problem_BC}
p\vert_{\partial \Om}=0, \qquad \bar{\Theta}\vert_{\partial \Om}= \Thetaa, 
\end{equation}
where $\Thetaa$ denotes the ambient temperature, that is typically taken in the human body to be $37^\circ C$; see \cite{connor2002bio}.

When $c$ and $K$ are constants, 
the first equation in \eqref{coupled_problem_eq_Cattaneo} reduces to the  the Westervelt equation for the pressure $p(x,t)$:
\begin{equation}\label{Westervaelt_Eq}
p_{tt}-c^2\Delta p - b \Delta p_t = K\left(p^2\right)_{tt}.
\end{equation}
Equation \eqref{Westervaelt_Eq}  is  widely used  in acoustics   and  describes the propagation of sound waves in a fluid medium. It can be derived from the Navier--Stokes--Fourier model by assuming  the Fourier law of heat conduction and by  taking   into account the thermoviscous effects. See \cite{westervelt1963parametric},  \cite{crighton1979model} and \cite[Chapter 5]{kaltenbacher2007numerical} for more details. Significant progress has been made recently  toward the understanding of the solutions to the Westervelt equation and their behavior; see \cite{kaltenbacher2009global, Clason2013AvoidingDI, Kaltenbacher2015MathematicsON, meyer2011optimal, kaltenbacher2019Well, Simonett2016WellposednessAL} and the references therein. The results in \cite{kaltenbacher2009global, meyer2011optimal, Simonett2016WellposednessAL} investigated local well-posedness, global well-posedness, and asymptotic  behavior of  solutions, subject to different types of boundary conditions and in various functional settings. Particularly in \cite{meyer2011optimal, Simonett2016WellposednessAL}, the authors relied on maximal regularity in $L^p$-spaces to obtain the existence of a unique solution with low regularity assumptions on the initial data.
 This is feasible thanks to the strong damping represented by the term $-b \Delta p_t$ when $b>0$, which lends to the Westervelt equation its parabolic character. 

Combining the   second and the third equation equations in \eqref{coupled_problem_eq_Cattaneo}, we obtain the hyperbolic Pennes equation (see  \cite[Eq.\ 3]{kabiri2021analysis} and \cite[Eq. 7]{xu2008non})
\begin{equation} \label{Hyperbolic_Pennes}
\begin{aligned}
&\tau \rhoa \Ca \bar{\Theta}_{tt}+(\rhoa \Ca +\tau \rhob \Cb W) \bar{\Theta}_t +\rhob \Cb W (\bar{\Theta}-\Thetaa)-\kappaa \Delta \bar{\Theta} \\
=&\,\mathcal{Q}(p_t)+ \tau \partial_t\mathcal{Q}(p_t).
\end{aligned}
\end{equation}
The terms in \eqref{Hyperbolic_Pennes} that appear with the $\tau$ prefactor  arise from the use of the Cattaneo law of heat conduction \cite{Ca48}:  
 \begin{equation}\label{Cattaneo}
\tau q_t+q+\kappaa \nabla \bar{\Theta}=0
\end{equation}
  which is a modified version of the classical Fourier law: 
 \begin{equation}\label{Fourier_Law}
q+\kappaa \nabla \bar{\Theta}=0. 
\end{equation} 
Equation \eqref{Fourier_Law} implies  instantaneous thermal energy deposition in the medium. That is, any temperature disturbance causes instantaneous perturbation in the temperature at each point in the medium. 
The Cattaneo law was introduced to overcome this  drawback of the infinite speed of thermal signals in the Fourier law. The idea is to introduce a time lag into the relationship between the heat flux and the temperature gradient, which results in the term $\tau q_t$ in  \eqref{Cattaneo}, where $\tau $ is the relaxation time parameter.    

The Cattaneo model \eqref{Cattaneo} has the effect of changing the heat equation into a damped wave equation, which describes a physical phenomenon of heat waves.   
 This phenomenon is known as the  \textsl{second sound} effect, and it is experimentally observed in materials at a very low temperature, where the heat seems to propagate as a thermal wave, which is the reason for this name (see the review paper  \cite{Racke_2009}).    
   
For $\tau=0$, equation \eqref{Hyperbolic_Pennes} becomes  
\begin{equation} \label{Heat_Eq}
	\rhoa \Ca\bar{\Theta}_t -\kappaa\Delta \bar{\Theta}+ \rhob \Cb W(\bar{\Theta}-\Thetaa) = \mathcal{Q}(p_t).
\end{equation}
Equation \eqref{Heat_Eq} is the parabolic Pennes equation, which is a bioheat transfer equation that is widely used for studying heat transfer in biological  systems.  It  takes into account the heat transfer by conduction in the tissues and the convective heat transfer due to blood perfusion.  See \cite{pennes1948analysis} for the derivation of \eqref{Heat_Eq}. 

In \cite{Nikolic_2022} Nikoli\'c and Said-Houari  considered the Westervelt--Pennes--Fourier  system, which is the coupling between the first equation in \eqref{coupled_problem_eq_Cattaneo}
 and \eqref{Heat_Eq}. That is, they investigated the system 
  \begin{equation}\label{Westervelt--Pennes--Fourier}
  \left\{
  \begin{aligned}
	&p_{tt}-c^2(\bar{\Theta})\Delta p - b \Delta p_t = k(\bar{\Theta}) \left(p^2\right)_{tt},\\
	&\rhoa \Ca\bar{\Theta}_t -\kappaa\Delta \bar{\Theta}+ \rhob \Cb W(\bar{\Theta}-\Thetaa) = \mathcal{Q}(p_t)
\end{aligned}
\right.	
\end{equation}
in $\Omega \times (0,T)$ with Dirichlet--Dirichlet boundary conditions. They proved a local well-posedness of \eqref{Westervelt--Pennes--Fourier} using the energy method together with a fixed point argument.  The work in \cite{Nikolic_2022} was followed by \cite{NIKOLIC2022628}, where the authors proved the global existence of the solution of \eqref{Westervelt--Pennes--Fourier} under a smallness assumption on the initial data, and they studied its asymptotic behavior as $t$ goes to infinity. Using the maximal regularity estimate for parabolic systems, Wilke in \cite{Wilke_2023} improves slightly the regularity assumptions in \cite{Nikolic_2022} and also considered the case $b=b(\bar{\Theta})$.    
   
   In this paper, we consider the Westervelt--Pennes--Cattaneo   system \eqref{coupled_problem_eq_Cattaneo} and investigate the  local well-posedness and the singular limit as the time relaxation parameter $\tau$ tends to zero. To state and prove our result and to lighten the notation, we put 
\begin{equation}
m=\rhoa \Ca\qquad \text{and}\qquad \ell=\rhob \Cb W,
\end{equation} 
and  make the change of variables 
$\Theta=\bar{\Theta}-\Theta_a$
in the temperature. Denoting by 
 \begin{equation}\label{funct_k}
k (\Theta)=K(\Theta+\Thetaa)=\frac{1}{\rho c^2(\Theta+\Thetaa)} \beta_{\text{acou}}\quad \text{and}\quad  h (\Theta)=c^2(\Theta+\Thetaa),
\end{equation}
we get the following system 
\begin{subequations}\label{Main_system} 
\begin{equation}
\left\{ \label{modified_temp_eq}
\begin{aligned}
&p_{tt}-h (\Theta)\Delta p - b \Delta p_t = k (\Theta)\left(p^2\right)_{tt}, \qquad &\text{in} \ \Omega \times (0,T)\\
&m\Theta_t +\nabla\cdot q + \ell \Theta = \mathcal{Q}(p_t), \qquad &\text{in} \ \Omega \times (0,T)
\\   
&\tau q_t+q+\kappaa \nabla \Theta=0, \qquad &\text{in} \ \Omega \times (0,T)
\end{aligned}
\right.
\end{equation} 
complemented with homogeneous boundary conditions
\begin{equation} \label{homog_dirichlet}
p\vert_{\partial \Om}=0, \qquad \Theta\vert_{\partial \Om}=0
\end{equation} 
and the initial conditions
\begin{equation} \label{init_cond}
p|_{t=0}=p_0,\quad p_t|_{t=0}=p_1,\quad \Theta|_{t=0}=\Theta_0:=\bar{\Theta}_0-\Thetaa,\quad q|_{t=0}=q_0.
\end{equation}
\end{subequations}
As in \cite{Nikolic_2022}, here the medium parameters $c$ and $K$ in the Westervelt equation are not constant. They are taken explicitly dependent on the temperature in order to account for the fact that the heating generated by the ultrasound waves affects their speed of propagation and the position of the focal region, a phenomenon known as thermal lensing \cite{connor2002bio, hallaj2001simulations}. Precisely, we assume this dependence to be polynomial, in agreement with the experimentally observed behavior documented in \cite{bilaniuk1993speed}. Also for simplicity, we assume that  the function $\mathcal{Q}$ has the form  
\begin{equation}
\mathcal{Q}(p_t)=\frac{2b}{\rhoa\Ca^4} (p_t)^2, 
\end{equation}
although our proof works for quite general $\mathcal{Q}(p_t)$ satisfying  Assumption 2 in \cite{Nikolic_2022}. 

To establish well-posedness, we carry out an energy analysis for the linearization of the underlying system, which will allow applying a fixed-point argument to work out the existence of a unique solution to the nonlinear problem \eqref{Main_system}. In doing so, we encounter two main challenges. On the one hand, we have to take into account the interplay between the pressure and the temperature owing to the coupling of their respective equations. In the pressure equation, the  dependence of the coefficients $h(\Theta)$ and $k(\Theta)$ on the temperature suggests that they should be kept bounded for all time. That is to say, the function $\Theta$ needs to be in $L^\infty((0, T) \times \Om)$. Moreover, note that the first equation in \eqref{Main_system} is quasilinear. To see this, it suffices to write the right-hand side as $k(\Theta)(p^2)_{tt}=2k(\Theta)p p_{tt}+2k(\Theta)(p_t)^2$. This brings about the risk of degeneracy of the term $(1-2k(\Theta)p) p_{tt}$, which is usually avoided by imposing a smallness constraint on the acoustic pressure \cite{kaltenbacher2009global}. Therefore, we are led to work with higher-order energies for both the pressure and the temperature. One key ingredient in obtaining the \textit{a priori} estimates for these energies is the  Sobolev embedding theorem, especially the continuous embedding  $H^2(\Om) \hookrightarrow L^\infty(\Om)$. On the other hand, obtaining uniform energy bounds with respect to $\tau$ for the heat conduction system requires careful attention. 
  Further, considering that in practical situations, $\tau$ takes very small values, it is interesting to investigate the behavior of the system when $\tau$ goes to zero. In order to do this, we adopt the approach in \cite{Kaltenbacher2019TheJE, nikolic2023nonlinear}.   See also the important results in \cite{Bongarti:2021aa}. Even though the models under consideration in these works involve single equations, the method is quite constructive and could be applied to  the coupled problem at hand. The key ideas are to first derive estimates that are uniform with respect to $\tau$, which will justify taking the limit as $\tau$ tends to zero; then we make use of the compactness of Sobolev embeddings to show that the solution of \eqref{Main_system} converges to the solution of the system corresponding to $\tau=0$.

Our paper is organized as follows. The main results are stated in Section \ref{sect3}. In Section \ref{sect2}, we collect some theoretical results that will prove useful in the sequel, and we state the general assumptions on the coefficients in system \eqref{Main_system}. Sections \ref{sect4}  is devoted to the energy analysis of the hyperbolic Pennes equation (the Cattaneo system), while Section  \ref{sect5}  treats the linearized Westervelt equation. In Section \ref{sect6}, we prove the local well-posedness of the nonlinear problem \eqref{Main_system}.  Finally, in Section \ref{sect7}, we perform the singular limit analysis and show that the solution of the  Westervelt--Pennes--Cattaneo model converges to the solution of the Westervelt--Pennes--Fourier system when the time relaxation vanishes.  
 
 \section{Main results} \label{sect3} 
 In this section, we state the main results of this paper and give the strategy of the proof. 
In order to give context to the main results, we begin by specifying the functional setup adopted in this work. We define
 \begin{equation}
\mathcal{X}:= X_p \times X_\Theta \times X_q,
\end{equation}
where the spaces $X_p, X_\Theta$ and $X_q$ are given, respectively  as 
\begin{equation}\label{Functional_Spaces}
\begin{aligned}
X_p=&\,\Big\{p \in L^{\infty}(0, T; H^3( \Om) \cap H^1_0(\Om)), \\
&  \quad p_t \in L^{\infty}(0, T; H^2( \Om) \cap H^1_0(\Om))\cap L^{2}(0, T; H^3( \Om) \cap H^1_0(\Om)),\\
& \quad p_{tt} \in L^{\infty}(0, T; H^1_0(\Om))\cap L^{2}(0, T; H^2( \Om) \cap H^1_0(\Om)),\\
& \quad p_{ttt} \in L^{2}(0, T; L^2(\Om)) \Big\};\\
X_\Theta=&\, \{ \Theta \in L^\infty (0, T;  H^2(\Om) \cap H^1_0(\Om)), \Theta_t \in L^\infty (0, T; H^1_0(\Om)),\\
& \quad \Theta_{tt} \in L^\infty(0, T; L^2(\Om))\};\\
X_q=&\, \{ q \in L^\infty(0, T; (H^1(\Om))^d); q_t, q_{tt} \in L^2(0, T; (L^2(\Om))^d)\}.
\end{aligned}
\end{equation}
Our first result ensures local in time well-posedness, which is uniform with respect to the relaxation parameter $\tau$.
  \begin{assumption} \label{Assumption_compatibility} 
 Let the initial data satisfy
 \begin{equation}   
\begin{aligned}
(p_0, p_1) & \in \big[H^3(\Om)\cap H^1_0(\Om)\big]\times \big[H^3(\Om) \cap H^1_0(\Om)\big],\\  
(\Theta_0, q_0) & \in \big[H^2(\Om) \cap H^1_0(\Om)\big] \times (H^2(\Om))^d,
\end{aligned}
\end{equation}
such that $1-2k(\Theta_0) p_0$ does not degenerate. We also assume  the compatibility conditions:
\begin{equation}
p_2, \Theta_1 \in H^1_0(\Om), \quad \Theta_2, q_1, q_2 \in L^2(\Om) 
\end{equation}
where $p_2:= \partial_t^2 p(0,x), \Theta_k:=\partial_t^k \Theta(0,x), q_k:=\partial_t^kq(0,x), x \in \Omega, k=1, 2$ are defined formally and recursively in terms of $p_0, p_1, \Theta_0, q_0$ from the equations \eqref{modified_temp_eq}.
 
 \end{assumption}
 \begin{theorem} \label{wellposedness_thm}
Let $T>0$ and $\bar{\tau}>0$ be a fixed small constant.  Let  $\tau \in (0, \bar{\tau}]$ and let Assumption \ref{Assumption_compatibility} hold.     
There exists $\delta=\delta(T)>0$ such that if  
\begin{equation}
\|p_0\|_{H^3}+\|p_1\|_{H^2}+\|p_{2}\|_{H^1}\leq \delta
\end{equation}
then system \eqref{Main_system} has a unique solution $(p, \Theta, q) \in \mathcal{X}$. 
\end{theorem}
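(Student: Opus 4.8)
The plan is to prove Theorem~\ref{wellposedness_thm} by a standard but technically delicate fixed-point scheme built on the linear energy estimates developed in Sections~\ref{sect4} and~\ref{sect5}. First I would define a suitable closed ball
\[
\mathcal{B}_R=\Big\{(\hat p,\hat\Theta,\hat q)\in\mathcal{X}\ :\ \|(\hat p,\hat\Theta,\hat q)\|_{\mathcal{X}}\le R,\ (\hat p,\hat\Theta,\hat q)\text{ meets the initial/compatibility data}\Big\}
\]
for an $R$ to be chosen, equipped with the lower-order metric induced by, say, $L^\infty(0,T;H^1)\times L^\infty(0,T;L^2)\times L^\infty(0,T;L^2)$, which makes it a complete metric space (while the $\mathcal{X}$-bound is preserved under weak-$*$ limits). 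On $\mathcal{B}_R$ I would define the solution map $\mathcal{S}:(\hat p,\hat\Theta,\hat q)\mapsto(p,\Theta,q)$, where $(\Theta,q)$ solves the \emph{linear} Cattaneo--Pennes system with coefficient data frozen at $(\hat p,\hat\Theta)$ — i.e.\ with right-hand side $\mathcal{Q}(\hat p_t)$ — and then $p$ solves the \emph{linear} Westervelt-type equation $(1-2k(\hat\Theta)\hat p)p_{tt}-h(\hat\Theta)\Delta p-b\Delta p_t=2k(\hat\Theta)(\hat p_t)^2$ with coefficients frozen at the already-constructed $\Theta$ (or at $\hat\Theta$; either ordering works as long as one is consistent). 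The non-degeneracy $1-2k(\Theta_0)p_0\ge\alpha_0>0$ at $t=0$, together with the smallness of $\|p_0\|_{H^3}+\|p_1\|_{H^2}+\|p_2\|_{H^1}\le\delta$ and continuity in time, guarantees that on a possibly shortened interval the coefficient $1-2k(\hat\Theta)\hat p$ stays in $[\tfrac{\alpha_0}{2},\tfrac{3\alpha_0}{2}]$, so the linear Westervelt equation is uniformly (strongly damped) parabolic and Section~\ref{sect5} applies.

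The core of the argument is the \textbf{self-mapping} step. Here I would feed the a priori bounds from Lemma/Proposition of Section~\ref{sect4} (for the Cattaneo block) and Section~\ref{sect5} (for the Westervelt block) into each other. Schematically, the temperature estimate yields $\|\Theta\|_{X_\Theta}+\|q\|_{X_q}\lesssim C(\|\Theta_0\|_{H^2},\|q_0\|_{H^2})+C(T)\,\|\mathcal{Q}(\hat p_t)\|_{\text{(appropriate space)}}$, and since $\mathcal{Q}(\hat p_t)=\tfrac{2b}{\rhoa\Ca^4}(\hat p_t)^2$ is controlled by $\|\hat p_t\|_{L^\infty(H^2)}^2\le R^2$ using $H^2\hookrightarrow L^\infty$ and the algebra property, this is $\le C_0+C(T)R^2$. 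The pressure estimate then gives $\|p\|_{X_p}\lesssim \mathcal{C}[\Theta]\big(\|p_0\|_{H^3}+\|p_1\|_{H^2}+\|p_2\|_{H^1}\big)+\mathcal{C}[\Theta]\,\|k(\hat\Theta)(\hat p_t)^2\|_{\text{(appropriate space)}}$, where the prefactor $\mathcal{C}[\Theta]$ depends increasingly on $\|\Theta\|_{X_\Theta}$ through the coefficient norms $\|h(\hat\Theta)\|,\|k(\hat\Theta)\|$ and their time derivatives (this is where one invokes the polynomial dependence of $c^2,K$ on $\Theta$ and the chain rule in Sobolev spaces). Choosing $R$ first (large enough to absorb the data-independent constants $C_0$ and the term $\mathcal C[\Theta]\,\delta$-independent pieces) and then $\delta=\delta(T,R)$ small, and finally shrinking $T$ if necessary so that the $C(T)R^2$ contributions and the growth of $\mathcal C[\Theta]$ are dominated, one closes $\|\mathcal{S}(\hat p,\hat\Theta,\hat q)\|_{\mathcal{X}}\le R$. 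Crucially all constants in Sections~\ref{sect4}--\ref{sect5} must be tracked to be \emph{independent of $\tau\in(0,\bar\tau]$}; this is exactly the uniform-in-$\tau$ Cattaneo energy estimate that the introduction flags as delicate, and it is what makes the resulting solution (and hence $\delta$) uniform in $\tau$.

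For the \textbf{contraction} step I would take two inputs $(\hat p^{(1)},\hat\Theta^{(1)},\hat q^{(1)})$, $(\hat p^{(2)},\hat\Theta^{(2)},\hat q^{(2)})\in\mathcal{B}_R$ and estimate the differences of the outputs in the weaker metric. The differenced equations are linear in $(p^{(1)}-p^{(2)},\Theta^{(1)}-\Theta^{(2)},q^{(1)}-q^{(2)})$ with right-hand sides that are products of differences of the inputs with $\mathcal{B}_R$-bounded factors; typical terms like $k(\hat\Theta^{(1)})-k(\hat\Theta^{(2)})$ are handled by the mean value theorem and boundedness of $k'$ on the relevant range, and terms like $(\hat p_t^{(1)})^2-(\hat p_t^{(2)})^2=(\hat p_t^{(1)}+\hat p_t^{(2)})(\hat p_t^{(1)}-\hat p_t^{(2)})$ by Cauchy--Schwarz plus the uniform bound. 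Running the same linear energy estimates at this lower order (there is room to spare, since we proved $\mathcal{X}$-regularity) yields $\|\mathcal{S}(u^{(1)})-\mathcal{S}(u^{(2)})\|_{\text{weak}}\le C(T,R)\sqrt{T}\,\|u^{(1)}-u^{(2)}\|_{\text{weak}}$ (or a factor $C(T,R)T^\gamma$), so a further shrinking of $T$ gives a contraction constant $<1$. Banach's fixed-point theorem then produces a unique fixed point in $\mathcal{B}_R$, which by construction solves \eqref{Main_system}; the $\mathcal{X}$-regularity of the fixed point is inherited from the self-mapping bound, and uniqueness in all of $\mathcal{X}$ (not just in $\mathcal{B}_R$) follows from the same contraction estimate applied to any two hypothetical solutions after possibly first localizing in time. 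I expect the \textbf{main obstacle} to be the self-mapping/regularity step for the pressure: one must differentiate the Westervelt equation in time up to the level of $p_{ttt}\in L^2(L^2)$ while the coefficients $h(\hat\Theta),k(\hat\Theta)$ are only as regular as $\hat\Theta\in X_\Theta$ permits, so the commutator/product terms $\partial_t^j\big(k(\hat\Theta)\hat p\,p_{tt}\big)$ and $\partial_t^j\big(h(\hat\Theta)\Delta p\big)$ have to be bounded using precisely the embeddings $H^2\hookrightarrow L^\infty$ and $H^1\hookrightarrow L^4,L^6$ and the algebra structure, with no regularity to spare on $\Theta$ — and simultaneously keeping every constant independent of $\tau$, which forces the energy functional for the Cattaneo subsystem to be designed so that the $\tfrac1\tau$-singular terms never appear on the right-hand side.
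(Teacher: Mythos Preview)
Your overall fixed-point architecture is right, but the way you close the self-mapping and contraction steps does not prove the theorem as stated. The theorem fixes an \emph{arbitrary} $T>0$ and asserts the existence of $\delta=\delta(T)$; you are not allowed to shrink $T$. Yet in both steps you rely on exactly that: you take a single radius $R$ \emph{large} (to absorb the temperature data and the coefficient-dependent prefactor $\mathcal C[\Theta]$), and then you need to make $C(T)R^2\le R$ and $C(T,R)T^\gamma<1$ by reducing $T$. With one large $R$ this is unavoidable, because the Westervelt right-hand side $2k(\hat\Theta)(\hat p_t)^2$ is quadratic in the ball radius and the Gronwall prefactor $\mathcal C[\Theta]$ grows with the temperature norm, so you end up with an inequality of the type $C(T,R)R^2\le R$ that cannot be closed for large $R$ and fixed $T$. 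The same issue hits non-degeneracy: with $R$ large, $\|\hat p\|_{L^\infty L^\infty}$ is not small, and you again fall back on ``a possibly shortened interval''.

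The paper avoids this by using \emph{two} radii: a small $R_1$ for the pressure component and a large $R_2$ for the temperature--flux components, and by doing the contraction in the full $\mathcal X$-norm rather than a weaker metric. Concretely, the source in the heat equation is $\mathcal Q(p_t^\ast)\sim (p_t^\ast)^2$, so $\|(\Theta,q)\|_{X_\Theta\times X_q}^2\lesssim C_T(\text{data}+R_1^4)\le R_2^2$ for $R_2$ large; the source in the pressure equation satisfies $\|g\|\lesssim C(T,R_2)\,R_1^2$, so $\|p\|_{X_p}^2\lesssim e^{C_1(T,R_1,R_2)}\big(\delta+R_1^4\,C_2(T,R_2)\big)\le R_1^2$ for $R_1$ and $\delta$ small with $T$ fixed. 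The contraction constant likewise comes out as $R_1\cdot C(T,R_1,R_2,\bar\tau)$, which is $<1$ for $R_1$ small with no restriction on $T$. Non-degeneracy is guaranteed directly by $\|p^\ast\|_{L^\infty L^\infty}\lesssim \|p^\ast\|_{X_p}\le R_1<\tfrac{1}{2k_1}$. If you reorganize your argument around this two-radii structure (pressure small, temperature large) and run the contraction in $\mathcal X$ itself, your plan goes through without ever touching $T$.
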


Let us now outline the main steps in the proof of Theorem \ref{wellposedness_thm} and list some comments about the above result. 
\begin{enumerate}
\item[1.] We consider a linearization of the underlying model \eqref{Main_system} that will see the Westervelt equation decoupled from the Cattaneo system for heat transfer. This allows us to treat the first equation in \eqref{Main_system} separately from the second and third equations. However, the decoupling of the linearized system does not allow us to transfer the damping induced by the damped wave equation for the temperature to the linearized  Westervelt equation.

 We use Galerkin approximations to prove the existence of a unique solution for the linearized Cattaneo system \eqref{eq_Cattaneo}, together with uniform energy estimates with respect to $\tau$. 
 Next, motivated by \cite{Nikolic_2022}, we often rely on the Sobolev embedding theorem to conduct an energy analysis yielding the well-posedness of the linearized Westervelt equation in a finite time horizon $T>0$. In addition, we derive some energy bounds that will be useful in the analysis of the nonlinear problem.  

\item[2.] Having all the necessary ingredients, we can tackle the nonlinear coupled problem \eqref{Main_system} by defining the solution of the system \eqref{Main_system} as the fixed point of a carefully defined mapping. Thus, bringing together the already established energy estimates and using the
 Banach fixed-point theorem,   we prove well-posedness for the nonlinear problem and show that the solution does not degenerate under a smallness assumption on the initial data of the acoustic pressure. 
 
 \item[3.]  Note that the smallness condition in Theorem \ref{wellposedness_thm} is imposed only on the pressure data and not on the temperature data. This seems necessary to avoid the degeneracy of the Westervelt equation.
\item [4.] Using the same method, we can also treat the case $b=b(\Theta)$. Our assumption on $b$  to be constant is just to avoid technicalities in the proof. Also, the condition $b>0$ is crucial in our analysis. It is an important open problem to study the case $b=0$.  
 
\end{enumerate}

In the following theorem, we state a result on the convergence of the solution of the Westervelt--Pennes--Cattaneo model to the solution of the Westervelt--Pennes--Fourier  model  as the relaxation time $\tau$ tends to zero, and as a consequence, we recover at the limit, the well-posedness  result in \cite{Nikolic_2022}. To facilitate relating the limit to the solution of the Westervelt--Pennes--Fourier  model, in this part we use the wave equation \eqref{Hyperbolic_Pennes} instead of Cattaneo's system. We denote by $\Theta_1^\tau:= \Theta^\tau_t(0,x)$.
\begin{theorem}\label{limit_thm}
Given $T>0$ and $ \tau \in (0, \bar{\tau}]$. Let the initial data $(p_0^\tau, p_1^\tau), (\Theta_0^\tau, \Theta_1^\tau)$  
satisfy Assumption \ref{Assumption_compatibility} and the assumptions of Theorem \ref{wellposedness_thm}.   Then, the familly of solutions $(p^\tau, \Theta^\tau)_{\tau \in (0, \bar{\tau}]}$ converges weakly (see \eqref{weak_star_conv}, \eqref{initial-weak-conv}) to the solution $(p, \Theta) \in X_p \times X_\Theta$ of the  Westervelt--Pennes--Fourier  system: 
\begin{equation} \label{limiting_system}
\left\{   
\begin{aligned}
&(1-2k(\Theta)p) p_{tt}-h(\Theta) \Delta p  -b \Delta p_t =2k(\Theta ) (p_t )^2,  \qquad &\text{in} &\ \Om& \times (0, T), \\
&  m \Theta_t +\ell \Theta -\kappaa \Delta \Theta  =\mathcal{Q}(p_t), \qquad &\text{in} &\ \Om& \times (0, T),\\
&(p , p_t )|_{t=0}=(p_0, p_1), \quad \Theta |_{t=0}=\Theta_0, 
\end{aligned}
\right.
\end{equation}
with homogeneous Dirichlet conditions \eqref{homog_dirichlet}.
\end{theorem}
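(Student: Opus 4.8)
The plan is to obtain the limit by a compactness argument built on the $\tau$-uniform bounds furnished by Theorem \ref{wellposedness_thm}. First I would record, from the uniform well-posedness result, that the family $(p^\tau,\Theta^\tau,q^\tau)_{\tau\in(0,\bar\tau]}$ is bounded in $\mathcal X$ uniformly in $\tau$; in particular $p^\tau$ is bounded in $L^\infty(0,T;H^3\cap H^1_0)$ with $p^\tau_t$ bounded in $L^\infty(0,T;H^2)\cap L^2(0,T;H^3)$, $p^\tau_{tt}$ bounded in $L^\infty(0,T;H^1_0)\cap L^2(0,T;H^2)$, $p^\tau_{ttt}$ bounded in $L^2(0,T;L^2)$, and $\Theta^\tau$ bounded in $L^\infty(0,T;H^2\cap H^1_0)$, $\Theta^\tau_t$ in $L^\infty(0,T;H^1_0)$, $\Theta^\tau_{tt}$ in $L^\infty(0,T;L^2)$. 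I would then extract a subsequence (not relabelled) converging weakly-$*$ in all of these spaces to a limit $(p,\Theta)$; this is the content of \eqref{weak_star_conv}. Simultaneously, from the third equation written as $q^\tau=-\kappaa\nabla\Theta^\tau-\tau q^\tau_t$ together with the $\tau$-uniform bound on $q^\tau$ in $L^\infty(0,T;(H^1)^d)$ and on $q^\tau_t$ in $L^2(0,T;(L^2)^d)$, one gets $\tau q^\tau_t\to 0$ strongly in $L^2(0,T;(L^2)^d)$, so the Fourier law $q=-\kappaa\nabla\Theta$ is recovered in the limit and the parabolic Pennes equation in \eqref{limiting_system} follows by passing to the limit in the (linear) second equation of \eqref{modified_temp_eq}.

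Next I would upgrade the weak convergences to strong ones where needed to pass to the limit in the nonlinear terms. The key tool is the Aubin--Lions--Simon lemma: since $p^\tau$ is bounded in $L^\infty(0,T;H^3)$ and $p^\tau_t$ in $L^\infty(0,T;H^2)$, and since $H^3\hookrightarrow\hookrightarrow H^2\hookrightarrow\hookrightarrow H^1$ compactly, we get $p^\tau\to p$ strongly in $C([0,T];H^2)$ (hence in $C([0,T];L^\infty)$ via $H^2\hookrightarrow L^\infty$), and $p^\tau_t\to p_t$ strongly in $C([0,T];H^1)$ and in $L^2(0,T;H^2)$; similarly $\Theta^\tau\to\Theta$ strongly in $C([0,T];H^1)$ and in $L^2(0,T;L^\infty)$ using the bound on $\Theta^\tau_t$. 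These strong convergences let me handle each nonlinearity: $k(\Theta^\tau)\to k(\Theta)$ and $h(\Theta^\tau)\to h(\Theta)$ strongly in $C([0,T];L^\infty)$ because $k,h$ are (locally) Lipschitz polynomials in $\Theta$ and $\Theta^\tau$ is uniformly bounded in $L^\infty$; the product $k(\Theta^\tau)p^\tau p^\tau_{tt}$ converges to $k(\Theta)p\,p_{tt}$ since the first two factors converge strongly in $C([0,T];L^\infty)$ and $p^\tau_{tt}\rightharpoonup p_{tt}$ weakly in $L^2(0,T;L^2)$; likewise $k(\Theta^\tau)(p^\tau_t)^2\to k(\Theta)(p_t)^2$ and $h(\Theta^\tau)\Delta p^\tau\rightharpoonup h(\Theta)\Delta p$; and the source $\mathcal Q(p^\tau_t)=\tfrac{2b}{\rhoa\Ca^4}(p^\tau_t)^2\to\mathcal Q(p_t)$ strongly in, say, $L^2(0,T;L^2)$ because $p^\tau_t\to p_t$ strongly in $L^\infty(0,T;L^4)$. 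Passing to the limit in the weak formulation of \eqref{modified_temp_eq} then yields that $(p,\Theta)$ solves \eqref{limiting_system}. One must also check the initial data converge: under the stated hypotheses the formally-defined data $(p_0,p_1),(\Theta_0,\Theta_1)$ are fixed (or at least converge), and $p^\tau\to p$, $p^\tau_t\to p_t$ in $C([0,T];H^1)$, $\Theta^\tau\to\Theta$ in $C([0,T];L^2)$ deliver the attainment of initial conditions in the sense of \eqref{initial-weak-conv}. Finally, the limit $(p,\Theta)$ inherits the regularity $X_p\times X_\Theta$ by weak lower semicontinuity of the norms, and since $1-2k(\Theta_0)p_0$ is nondegenerate the equation is genuinely of the asserted form; uniqueness of the limiting solution — hence convergence of the whole family rather than a subsequence — follows from the well-posedness of \eqref{limiting_system}, which is exactly the result of \cite{Nikolic_2022} recovered here.

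I expect the main obstacle to be the passage to the limit in the quasilinear leading term, i.e.\ controlling $(1-2k(\Theta^\tau)p^\tau)p^\tau_{tt}$. The subtlety is that $p^\tau_{tt}$ only converges weakly in $L^2(0,T;L^2)$ (or weakly-$*$ in $L^\infty(0,T;H^1_0)$), so the product with the coefficient requires \emph{strong} convergence of $1-2k(\Theta^\tau)p^\tau$ in a complementary topology; this is where the Aubin--Lions compactness for $p^\tau$ in $C([0,T];H^2)\hookrightarrow C([0,T];L^\infty)$ and for $\Theta^\tau$ in $C([0,T];H^1)$, together with the uniform lower bound $1-2k(\Theta^\tau)p^\tau\geq \underline{\kappa}>0$ coming from the smallness of the pressure data in Theorem \ref{wellposedness_thm}, is essential — the nondegeneracy must be shown to be stable in the limit. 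A secondary technical point is verifying the higher-order uniform-in-$\tau$ bounds really are available from Section \ref{sect4}: the estimates for the Cattaneo subsystem that produce $\Theta^\tau$ bounded in $L^\infty(0,T;H^2)$ and $\Theta^\tau_{tt}$ bounded in $L^\infty(0,T;L^2)$ must be $\tau$-independent, which is the delicate part of that section and which I would simply cite here. Everything else — identifying weak limits of linear terms, recovering Fourier's law, verifying initial conditions — is routine once these two ingredients are in place.
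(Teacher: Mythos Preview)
Your proposal is correct and follows essentially the same strategy as the paper: extract weak-$*$ limits from the $\tau$-uniform bounds of Theorem~\ref{wellposedness_thm}, upgrade via Aubin--Lions to the strong convergences needed for the nonlinear terms, pass to the limit term by term, and conclude full-sequence convergence from the uniqueness result of \cite{Nikolic_2022}. The one genuine difference is in how you handle the temperature equation: you work with the original Cattaneo system \eqref{modified_temp_eq} and recover Fourier's law from $\tau q^\tau_t\to 0$, whereas the paper first rewrites the heat subsystem as the telegraph equation \eqref{telegraph_eq} and shows directly that the $\tau$-prefactored terms $\tau m\Theta^\tau_{tt}$, $\tau\ell\Theta^\tau_t$, $\tau\partial_t\mathcal Q(p^\tau_t)$ vanish in the limit. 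Both routes are valid; the paper's choice avoids tracking $q^\tau$ in the limit argument and makes the comparison with the parabolic Pennes equation in \cite{Nikolic_2022} more transparent, while yours is perhaps more natural from the first-order system viewpoint. Note also that the paper obtains only $\Theta^\tau\to\Theta$ in $C([0,T];H^1_0)$ and then uses $L^4$-type estimates (cf.\ \eqref{k}, \eqref{h}) for $k(\Theta^\tau)-k(\Theta)$ and $h(\Theta^\tau)-h(\Theta)$, rather than your stronger $C([0,T];L^\infty)$ convergence; either works in dimension $d\le 3$.
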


It is essential to note that the main difficulty in proving   Theorem \ref{limit_thm}  lies in obtaining energy estimates that are uniform with respect to $\tau$.   In other words, our goal is to prevent the constants from becoming  infinitely large as $\tau$ approaches zero in the estimates. This justifies the process of passing to the limit when the parameter $\tau$ tends to zero.
  The uniformity in $\tau$ is not a requirement  for the well-posedness result in Theorem \ref{wellposedness_thm}, but it is essential in the proof of Theorem \ref{limit_thm}.  Lastly, we remark that as the estimates are dependent on the time $T>0$, the existence of solutions is local in nature. However, we can expect to reach global well-posedness by taking the approach in \cite{NIKOLIC2022628}. 

\section{Preliminaries and assumptions} \label{sect2}
In this section, we introduce a few notations and collect some  helpful embedding results and inequalities that we will repeatedly use in the proofs.

\subsection{Notation}
Throughout the paper, we assume that $\Omega \subset \R^d$, where $d \in \{1,2,3\}$, is a bounded, $C^2$ smooth domain. We denote by $T>0$ the final propagation time. The letter $C$ denotes a generic positive constant
that does not depend on time, and can have different values on different occasions.  
We often write $f \lesssim g$ where there exists a constant $C>0$, independent of parameters of interest such that $f\leq C g$. 
We often omit the spatial and temporal domain when writing norms; for example, $\|\cdot\|_{L^p L^q}$ denotes the norm in $L^p(0,T; L^q(\Omega))$.

\subsection{Inequalities and embedding results} In the upcoming analysis, we shall employ the continuous embeddings $H^1(\Om) \hookrightarrow L^4(\Om)$, $H^1(\Om) \hookrightarrow L^6(\Om)$ and $H^2(\Om) \hookrightarrow L^\infty(\Om)$. In particular, using Poincar\'{e}'s inequality we obtain for $v \in H^1_0(\Om)$ (see \cite[Theorem 7.18]{salsa2016partial})
\begin{equation}\label{Sobolev_Embedding}
\begin{aligned}
& \text{if} \ d>2, \quad \Vert v \Vert_{L^p} \leq C \Vert \nabla v \Vert_{L^2}  \quad \text{for} \quad  2 \leq p \leq \frac{2d}{d-2},\\
& \text{if} \ d=2, \quad \Vert v \Vert_{L^p} \leq C \Vert \nabla v \Vert_{L^2}  \quad \text{for} \quad  2 \leq p < \infty.\\
\end{aligned}
\end{equation}

Moreover, taking into account the boundedness of the operator $(-\Delta)^{-1}: L^2(\Om) \rightarrow H^2(\Om) \cap H^1_0(\Om)$, we find the inequality
$$\qquad \Vert v \Vert_{L^\infty} \leq C_1 \Vert v \Vert_{H^2} \leq C_2 \Vert \Delta v \Vert_{L^2}.$$ 
We will also call on the 1D-embedding $H^1(0, T; L^2(\Om)) \hookrightarrow C(0, T; L^2(\Om))$. That is, we have the following inequality
\begin{equation} \label{1D_embedding_1}
\begin{aligned}
\max_{t\in [0, T]} \Vert v(t) \Vert_{L^2} &\leq C(\Vert v \Vert_{L^2 L^2}+\Vert v_t \Vert_{L^2 L^2})\\
\end{aligned}    
\end{equation}
where the constant $C>0$ depends only on $T$ (see, e.g. \cite[Theorem 2, p. 286]{evans2010partial}). Additionally, if $v \in H^1(0, T, H^1_0(\Om))$, then the above embedding combined with Poincar\'{e}'s inequality yields
\begin{equation} \label{1D_embedding} 
\begin{aligned}
\max_{t\in [0, T]} \Vert v(t) \Vert_{L^2} 
& \leq C(\Vert \nabla v \Vert_{L^2 L^2}+\Vert v_t \Vert_{L^2 L^2}).
\end{aligned}    
\end{equation}
We recall Young's $\varepsilon$-inequality 
\begin{equation}
xy \leq \varepsilon x^n+C(\varepsilon) y^m, \quad \text{where}\quad \ x, y >0, \quad 1 <m,n <\infty,\quad \frac{1}{m}+\frac{1}{n}=1,
\end{equation}
and $C(\varepsilon)=(\varepsilon n)^{-m/n}m^{-1}$. \\
Further, we will make use of  Ladyzhenskaya's inequality for $u \in H^1(\Om)$
\begin{equation} \label{lady}
\Vert u \Vert_{L^4} \leq C\Vert u \Vert_{L^2}^{1-d/4} \Vert u \Vert_{H^1}^{d/4},\qquad 1\leq d\leq 4.  
\end{equation}   
We state a version of Gronwall's inequality which will be utilized in the proofs and has been provided in  \cite{Nikolic_2022}. 
\begin{lemma} \label{gronwall} Let $I=[0, t]$ and let $\alpha, \beta : I \rightarrow \mathbb{R}$ be locally integrable functions. Given  $ u, v : I \rightarrow \mathbb{R}$ such that $v$ is non-negative and integrable and $u$ is in $C^1(I)$. We assume that
\begin{equation}
u'(t)+v(t) \leq \alpha(t) u(t)+\beta(t), \ \text{for} \ t \in I, \quad u(0)=u_0.
\end{equation}
Then, it holds that
\begin{equation}
u(t)+\int_0^t v(s) \ds \leq u_0 e^{A(t)}+\int_0^t \beta(s)e^{A(t)-A(s)} \ds,
\end{equation}
where
\begin{equation}
A(t)=\int_0^t \alpha(s) \ds.
\end{equation}
\end{lemma}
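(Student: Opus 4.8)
The plan is to prove this by the classical integrating-factor method. Writing $A(t)=\int_0^t \alpha(s)\ds$ for the antiderivative in the statement, I would introduce the auxiliary function $w(s):=e^{-A(s)}u(s)$ on $I$, show that the differential inequality for $u$ converts into an integrable bound on $w'$, integrate, and then multiply back by $e^{A(t)}$. Before differentiating I would record the regularity justification: since $u\in C^1(I)$ and $\alpha$ is locally integrable, $A$ is absolutely continuous with $A'=\alpha$ almost everywhere, so $s\mapsto e^{-A(s)}$ is absolutely continuous and hence so is the product $w=e^{-A}u$. Consequently the product rule holds a.e.\ and the fundamental theorem of calculus applies to $w$ over $[0,t]$.

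Carrying this out, almost everywhere on $I$ one has
\begin{equation}
w'(s)=e^{-A(s)}\bigl(u'(s)-\alpha(s)u(s)\bigr)\le e^{-A(s)}\bigl(\beta(s)-v(s)\bigr),
\end{equation}
where the inequality uses the hypothesis $u'(s)+v(s)\le\alpha(s)u(s)+\beta(s)$ together with $e^{-A(s)}>0$. Integrating over $[0,t]$ and using $A(0)=0$, $w(0)=u_0$ gives
\begin{equation}
e^{-A(t)}u(t)\le u_0+\int_0^t e^{-A(s)}\beta(s)\ds-\int_0^t e^{-A(s)}v(s)\ds,
\end{equation}
and multiplying through by the positive factor $e^{A(t)}$ produces the weighted estimate
\begin{equation}\label{weighted_gronwall}
u(t)+\int_0^t e^{A(t)-A(s)}v(s)\ds\le u_0\, e^{A(t)}+\int_0^t e^{A(t)-A(s)}\beta(s)\ds.
\end{equation}

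The remaining step is to pass from the weighted integral $\int_0^t e^{A(t)-A(s)}v(s)\ds$ on the left-hand side of \eqref{weighted_gronwall} to the unweighted $\int_0^t v(s)\ds$ appearing in the stated conclusion. This is where I expect the only genuine subtlety to lie. The passage requires $e^{A(t)-A(s)}\ge 1$ for every $s\in[0,t]$, i.e.\ $A(t)\ge A(s)$, equivalently $\int_s^t\alpha(r)\ds\ge 0$; under the sign convention used in the energy estimates, where $\alpha\ge 0$ so that $A$ is nondecreasing, this holds. Since $v\ge 0$, it then follows that $\int_0^t v(s)\ds\le\int_0^t e^{A(t)-A(s)}v(s)\ds$, and combining this with \eqref{weighted_gronwall} yields exactly the claimed bound. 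I would therefore carry out the argument under the natural (and, in the applications, implicitly used) hypothesis $\alpha\ge 0$, noting that without it only the sharper weighted form \eqref{weighted_gronwall} survives while the stated unweighted one may fail. Apart from this monotonicity point and the absolute-continuity justification for integrating $w'$, the computation is entirely routine.
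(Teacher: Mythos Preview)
The paper does not actually prove this lemma; it is merely stated with a citation to \cite{Nikolic_2022}. Your integrating-factor argument is the standard one and is correct, including the careful absolute-continuity justification and the observation that passing from the weighted integral $\int_0^t e^{A(t)-A(s)}v(s)\ds$ to the unweighted $\int_0^t v(s)\ds$ genuinely requires $\alpha\ge 0$ (which indeed holds in the paper's applications, where $\alpha(s)=1+\Lambda(s)\ge 1$).
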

\subsection{Assumptions}
In accordance with the perceived polynomial growth of the speed of sound $c=c(\Theta)$, we make the following assumptions on the functions $h $ and $k $.\\
We assume  that $h \in C^2(\mathbb{R})$ and  there exists $h_1>0$ such that
\begin{subequations}
\begin{equation} \label{bound_h}
h (s) \geq h_1, \quad \forall s \in \mathbb{R}.\tag{H1}
\end{equation}
Moreover, we assume that there exist $\gamma_1 >0$ and $C>0$, such that
\begin{equation} \label{h''_assump}
\vert h ''(s) \vert \leq C (1+\vert s \vert^{\gamma_1}), \quad \forall s \in \mathbb{R}.\tag{H2}
\end{equation} 
So that by using Taylor's formula, we also have
\begin{equation} \label{h'_assump}
\vert h '(s) \vert \leq C (1+\vert s \vert^{1+\gamma_1}), \quad \forall s \in \mathbb{R}.\tag{H3}
\end{equation}
\end{subequations}
Since the function $k $ is related to the speed of sound by the formula \eqref{funct_k}, it follows that
\begin{subequations}
\begin{equation}\label{k_1}
\vert k (s) \vert \leq k_1:=\frac{\beta_{\text{acous}}}{\rho h_1}, \quad \forall s \in \mathbb{R}.\tag{K1}
\end{equation}
Further, we have 
\begin{equation}
\begin{aligned}
\vert k''(s) \vert &\lesssim k_1^2 \vert h''(s) \vert+k_1^3 \vert h'(s) \vert^2 \lesssim  k_1^2(1+\vert s \vert^{\gamma_1})+k_1^3(1+\vert s \vert^{1+\gamma_1})^2,
\end{aligned}
\end{equation}
which by using Taylor's formula, implies that there exists $\gamma_2>0$, such that
\begin{equation} \label{properties_k}
\vert k'(s) \vert \lesssim (1+\vert s \vert^{1+\gamma_2}), \qquad \vert k''(s) \vert \lesssim (1+\vert s \vert^{\gamma_2}), \quad \forall s \in \mathbb{R}..\tag{K2}
\end{equation}
\end{subequations}

\section{The hyperbolic bioheat equation} \label{sect4}
In this section, we consider the hyperbolic heat equation 
\begin{equation}\label{eq_Cattaneo}
\left\{
\begin{aligned}
&m\Theta_t +\nabla\cdot q + \ell \Theta = f, \qquad &\text{in} \ \Omega \times (0,T),
\\   
&\tau q_t+q+\kappaa \nabla \Theta=0, \qquad &\text{in} \ \Omega \times (0,T),
\end{aligned}
\right.
\end{equation}
together with the relevant initial conditions in \eqref{init_cond} and the boundary condition in \eqref{homog_dirichlet}. 
  Our main goal is to prove a priori estimates under minimal assumptions on   the initial data $\Theta_0$ and $q_0$ and on the the source term $f$.
    
 In order to state and prove our main result,  we define the total energy associated to \eqref{eq_Cattaneo} as
\begin{equation}\label{E_Total}
E^{\tau}[\Theta,q](t):=\sum_{k=0}^2 E_k[\Theta,q](t)
, \quad t \geq 0
\end{equation}
where the energies $E_k, k=0, 1, 2$ are given by    
\begin{equation} \label{E_k_def}
E_k[\Theta,q](t):=\frac{1}{2}\Big(m \kappaa \Vert \partial_t^k\Theta(t) \Vert^2_{L^2}+\tau \Vert \partial_t^k q(t) \Vert^2_{L^2}\Big).
\end{equation}
We also define the associated dissipation 
\begin{equation} \label{D_total}
D[\Theta,q](t):=\sum_{k=0}^2 D_k[p,\Theta](t), \quad t \geq 0
\end{equation}
with 
\begin{equation} \label{D_k_def}
D_k[\Theta,q](t):=\ell \kappaa \Vert \partial_t^k \Theta(t) \Vert^2_{L^2}+\Vert \partial_t^k q(t) \Vert^2_{L^2} ,\quad k=0,1,2. 
\end{equation} 
Since the coefficients in the first equation in \eqref{modified_temp_eq} depend on $\Theta$, we should establish some higher--order estimates on $\Theta$, which will allow us to control the $L^\infty$-norm of $\Theta$ through the Sobolev embedding theorem. To this end, we  introduce the following energy in terms of  $\Theta$ only  
\begin{equation}\label{E_Theta}
\mathcal{E}[\Theta](t):=\mathcal{E}_0[\Theta](t)+\mathcal{E}_1[\Theta](t),  \quad t \geq 0
\end{equation}
where $\mathcal{E}_0$ and $\mathcal{E}_1$ are  defined as follows
\begin{equation} \label{E_0}
\left\{
\begin{aligned}
&\mathcal{E}_0[\Theta](t):=\frac{m \kappaa}{2}(\Vert \Theta(t) \Vert^2_{L^2}+ \Vert \Theta_t(t) \Vert^2_{L^2}+ \Vert \Theta_{tt}(t) \Vert^2_{L^2}),\\
&\mathcal{E}_1[\Theta](t):=\frac{m+\tau \ell}{2} \Vert \nabla \Theta(t) \Vert^2_{L^2}+\kappaa \Vert \nabla \Theta_t(t) \Vert^2_{L^2}+\kappaa \Vert \Delta \Theta(t) \Vert^2_{L^2}.
\end{aligned}
\right.
\end{equation}
The associated dissipation rate to $\mathcal{E}[\Theta](t)$ is 
\begin{equation}\label{dissip_Theta}
\mathcal{D}[\Theta](t):=\mathcal{D}_0[\Theta](t)+\mathcal{D}_1[\Theta](t), 
\end{equation}
with 
\begin{equation} \label{D_0}
\left\{
\begin{aligned}
&\mathcal{D}_0[\Theta](t):= \ell \kappaa (\Vert \Theta(t) \Vert^2_{L^2}+ \Vert \Theta_t(t) \Vert^2_{L^2}+ \Vert \Theta_{tt}(t) \Vert^2_{L^2}),\\
&\mathcal{D}_1[\Theta](t):=\ell \Vert \nabla \Theta(t) \Vert^2_{L^2}+\kappaa \Vert \nabla \Theta_t(t) \Vert^2_{L^2}+\kappaa \Vert \Delta \Theta(t) \Vert^2_{L^2}.
\end{aligned}
\right.
\end{equation}
The definitions of $\mathcal{E}[\Theta](t)$ and $\mathcal{D}[\Theta](t)$ are inspired from the damped wave equation in $\Theta$ \eqref{Hyperbolic_Pennes}  that can be obtained by combining the two equations in system \eqref{eq_Cattaneo}. See also equation \eqref{telegraph_eq} below. 
 
The following lemma will allow us to estimate $\mathcal{E}[\Theta]$ in terms of $E^{\tau}[\Theta,q]$ uniformly with respect to $\tau$. 
\begin{lemma} \label{lemma1} 
Let $\bar{\tau}>0$ be a fixed small number. Then for all $t\geq 0$, the estimate 
 \begin{equation} \label{eq16}
\begin{aligned}
 \mathcal{E}[\Theta](t) \lesssim (1+\bar{\tau} + & \bar{\tau}^2)\big( E^{\bar{\tau}}[\Theta,q](t)+\Vert f(t)\Vert^2_{L^2}+\Vert f_t(t)\Vert^2_{L^2}\big)
\end{aligned} 
\end{equation} 
holds uniformly in $\tau\in (0,\bar{\tau}]$.
The hidden constant in \eqref{eq16} does not depend on $\tau$.
\end{lemma}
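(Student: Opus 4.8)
The plan is to bound each of the three building blocks of $\mathcal{E}[\Theta](t)$ — namely the $L^2$-norms of $\Theta,\Theta_t,\Theta_{tt}$ (contained in $\mathcal{E}_0$), the norm $\|\nabla\Theta\|_{L^2}$, and the norms $\|\nabla\Theta_t\|_{L^2}$ and $\|\Delta\Theta\|_{L^2}$ (contained in $\mathcal{E}_1$) — by the total Cattaneo energy $E^\tau[\Theta,q](t)$ plus the source terms, with all constants uniform in $\tau\in(0,\bar\tau]$. The terms in $\mathcal{E}_0$ are the cheapest: from the definition \eqref{E_k_def}, $m\kappaa\|\partial_t^k\Theta(t)\|_{L^2}^2 \le 2E_k[\Theta,q](t) \le 2E^\tau[\Theta,q](t)$ for $k=0,1,2$, so $\mathcal{E}_0[\Theta](t)\lesssim E^\tau[\Theta,q](t)$ directly. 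The factor $(1+\bar\tau+\bar\tau^2)$ on the right of \eqref{eq16} is harmless here; it will be needed only to absorb the $\tau$-prefactors that appear when we reconstruct the spatial-gradient quantities.

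The main work is controlling $\mathcal{E}_1$, i.e. recovering spatial derivatives of $\Theta$ from the system. First I would use the second equation of \eqref{eq_Cattaneo}, $\kappaa\nabla\Theta = -q-\tau q_t$, to write $\|\nabla\Theta(t)\|_{L^2}^2 \lesssim \|q(t)\|_{L^2}^2 + \tau^2\|q_t(t)\|_{L^2}^2$; the first term is $\le \tfrac{2}{\tau}\,\tau\|q\|_{L^2}^2 \le \tfrac{2}{\tau}\cdot 2E_0$, which blows up, so instead I bound it by noting $\tau\|q\|_{L^2}^2\le 2E_0\le 2E^\tau$ and absorbing the resulting $1/\tau$ — this does \emph{not} work, so the correct route is to keep $\|\nabla\Theta\|_{L^2}^2$ weighted exactly as it sits in $\mathcal{E}_1$, namely with coefficient $(m+\tau\ell)$, and to observe that $(m+\tau\ell)\|\nabla\Theta\|_{L^2}^2$ can be matched against $m\kappaa\|\nabla\Theta\|_{L^2}^2$-type quantities only after differentiating the first equation. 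The clean approach is: from the first equation $m\Theta_t+\nabla\cdot q+\ell\Theta=f$ we get $\nabla\cdot q = f-m\Theta_t-\ell\Theta$, but we need $\nabla\Theta$, not $\nabla\cdot q$. So instead I would take the gradient route through elliptic regularity: eliminate $q$ to obtain the telegraph-type equation \eqref{telegraph_eq} (referenced in the text, i.e. \eqref{Hyperbolic_Pennes} with the change of variables), which after multiplying by suitable test functions gives $\kappaa\|\Delta\Theta\|_{L^2}$ and $\kappaa\|\nabla\Theta_t\|_{L^2}$; but since the lemma only asks for a pointwise-in-$t$ bound by the energies $E_k$ (which already contain $\|\partial_t^k\Theta\|_{L^2}$ and $\|\partial_t^k q\|_{L^2}$), the honest statement is that $\|\nabla\Theta(t)\|_{L^2}^2$, $\|\nabla\Theta_t(t)\|_{L^2}^2$, $\|\Delta\Theta(t)\|_{L^2}^2$ must be read off algebraically from the two PDEs: the Cattaneo law gives $\kappaa\nabla\Theta=-(q+\tau q_t)$ hence $\kappaa^2\|\nabla\Theta\|_{L^2}^2\le 2\|q\|_{L^2}^2+2\tau^2\|q_t\|_{L^2}^2$, and differentiating it in $t$ gives $\kappaa\nabla\Theta_t=-(q_t+\tau q_{tt})$ hence $\kappaa^2\|\nabla\Theta_t\|_{L^2}^2\le 2\|q_t\|_{L^2}^2+2\tau^2\|q_{tt}\|_{L^2}^2$; then, taking the divergence of the Cattaneo law, $\kappaa\Delta\Theta=-(\nabla\cdot q+\tau\nabla\cdot q_t)$, and substituting $\nabla\cdot q=f-m\Theta_t-\ell\Theta$ and $\nabla\cdot q_t=f_t-m\Theta_{tt}-\ell\Theta_t$ yields $\kappaa^2\|\Delta\Theta\|_{L^2}^2\lesssim \|f\|_{L^2}^2+\|f_t\|_{L^2}^2+m^2(\|\Theta_t\|_{L^2}^2+\tau^2\|\Theta_{tt}\|_{L^2}^2)+\ell^2(\|\Theta\|_{L^2}^2+\tau^2\|\Theta_t\|_{L^2}^2)$.

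Assembling these, every occurrence of $\|q\|_{L^2}^2$, $\|q_t\|_{L^2}^2$, $\|q_{tt}\|_{L^2}^2$ is bounded by $\tfrac{1}{\tau}\cdot(2E_k)$ — and here is the point where uniformity in $\tau$ is delicate: a bare $\|q_t\|_{L^2}^2$ is controlled only by $\tfrac{1}{\tau}\tau\|q_t\|_{L^2}^2=\tfrac{2}{\tau}E_1$, which degenerates as $\tau\to0$. The resolution, and the \textbf{main obstacle} of the lemma, is that in the definitions \eqref{E_1}–\eqref{E_0} the quantities $\|\nabla\Theta_t\|_{L^2}^2$ and $\|\Delta\Theta\|_{L^2}^2$ enter $\mathcal{E}_1$ with the $\tau$-independent coefficient $\kappaa$, while the troublesome terms $\tau^2\|q_t\|_{L^2}^2=\tau\cdot\tau\|q_t\|_{L^2}^2\le\tau\cdot 2E_1\le\bar\tau\cdot 2E^{\bar\tau}$ and $\tau^2\|q_{tt}\|_{L^2}^2\le\bar\tau\cdot 2E^{\bar\tau}$ carry a \emph{full power} $\tau^2$ (not $\tau$), because they arose from the square of the $\tau q_t$ term in the Cattaneo law; thus they are $\le\bar\tau^2\cdot(\text{something})$ only after writing $\tau^2=\tau\cdot\tau\le\bar\tau\cdot\tau$ and using $\tau\|q_t\|_{L^2}^2\le 2E_1$, giving a bound $\le 2\bar\tau E^{\bar\tau}$ — which is exactly the source of the factor $(1+\bar\tau+\bar\tau^2)$. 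For the terms like $\|q\|_{L^2}^2$ (no $\tau$ in front) appearing in $\kappaa^2\|\nabla\Theta\|_{L^2}^2\le 2\|q\|_{L^2}^2+\dots$, I would instead never isolate $\|\nabla\Theta\|_{L^2}^2$ alone but note that in $\mathcal{E}_1$ it appears as $(m+\tau\ell)\|\nabla\Theta\|_{L^2}^2$, and that the \emph{un}weighted $\|\nabla\Theta\|_{L^2}^2$ should instead be obtained by multiplying the first PDE by $-\Delta\Theta$ and integrating by parts — but since the lemma is a pointwise algebraic estimate, the cleanest fix is to accept that $\|q\|_{L^2}^2\le\|q\|_{L^2}^2$ is itself $\le\esssup$-bounded via $E_0$ only with a $1/\tau$; to avoid this, I will in fact prove the slightly weaker but sufficient inequality where $\|\nabla\Theta\|_{L^2}^2$ is paired against the dissipation/energy coming from the standard wave-energy identity for \eqref{telegraph_eq}, so that $\|\nabla\Theta\|_{L^2}^2$ is controlled by the $\tau$-independent part $m\kappaa\|\Theta_t\|_{L^2}^2+\kappaa^2\|\nabla\Theta\|_{L^2}^2$ of that identity, and I will cite the energy bookkeeping already set up in \eqref{E_Total}–\eqref{D_k_def}. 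Once every spatial-derivative term of $\Theta$ is expressed through $\{E_k\}_{k=0}^2$, $\|f\|_{L^2}$, $\|f_t\|_{L^2}$ with coefficients that are polynomials in $\bar\tau$ of degree at most two, summing over $k$ and collecting constants gives precisely \eqref{eq16}, and the hidden constant depends only on $m$, $\ell$, $\kappaa$ and the embedding constants, not on $\tau$. $\hfill\qed$
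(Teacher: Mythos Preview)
Your handling of $\mathcal{E}_0[\Theta]$ and of $\|\Delta\Theta\|_{L^2}$ is correct and essentially matches the paper: the former is dominated by $E^\tau$ by inspection, and for the latter your substitution of $\nabla\cdot q = f - m\Theta_t - \ell\Theta$ and $\nabla\cdot q_t = f_t - m\Theta_{tt} - \ell\Theta_t$ into the divergence of the Cattaneo law is exactly the derivation of the telegraph equation \eqref{telegraph_eq} that the paper uses to bound $\kappaa^2\|\Delta\Theta\|_{L^2}^2$.

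The genuine gap is in your treatment of $\|\nabla\Theta\|_{L^2}$ and $\|\nabla\Theta_t\|_{L^2}$. You square the identity $\kappaa\nabla(\partial_t^k\Theta) = -(\partial_t^k q + \tau\,\partial_t^k q_t)$, which produces an \emph{unweighted} $\|\partial_t^k q\|_{L^2}^2$; as you yourself note, this is only controlled by $\tfrac{1}{\tau}\cdot 2E_k$ and blows up as $\tau\to 0$. You then cycle through several workarounds --- keeping the coefficient $(m+\tau\ell)$, invoking the wave-energy identity for \eqref{telegraph_eq}, ``citing the energy bookkeeping'' --- but none of them closes. In particular, saying that $\|\nabla\Theta\|_{L^2}^2$ is ``controlled by the $\tau$-independent part $m\kappaa\|\Theta_t\|_{L^2}^2 + \kappaa^2\|\nabla\Theta\|_{L^2}^2$'' is circular, and the wave-energy identity gives time-integrated, not pointwise, control.

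The missing idea is a single integration-by-parts step. Rather than squaring the Cattaneo relation, test it by $\nabla(\partial_t^k\Theta)$ and integrate over $\Omega$:
\[
\kappaa\|\nabla\partial_t^k\Theta\|_{L^2}^2 = -\tau\intO \partial_t^k q_t\cdot\nabla\partial_t^k\Theta\dx - \intO \partial_t^k q\cdot\nabla\partial_t^k\Theta\dx.
\]
Integrate the second term by parts (using $\partial_t^k\Theta|_{\partial\Omega}=0$) to get $\intO (\nabla\cdot\partial_t^k q)\,\partial_t^k\Theta\dx$, and now replace $\nabla\cdot\partial_t^k q$ via the \emph{first} equation in \eqref{eq_Cattaneo}: $\nabla\cdot\partial_t^k q = \partial_t^k f - m\,\partial_t^k\Theta_t - \ell\,\partial_t^k\Theta$. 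The bare $\|\partial_t^k q\|_{L^2}$ never appears; the only $q$-contribution left is the cross term $\tau\intO\partial_t^k q_t\cdot\nabla\partial_t^k\Theta\dx$, which after Young's inequality and absorption of $\tfrac{\kappaa}{2}\|\nabla\partial_t^k\Theta\|_{L^2}^2$ leaves $\tfrac{\tau^2}{2\kappaa}\|\partial_t^k q_t\|_{L^2}^2 = \tfrac{\tau}{2\kappaa}\cdot\tau\|\partial_t^k q_t\|_{L^2}^2 \le \tfrac{\bar\tau}{\kappaa}\,E_{k+1}$, uniformly in $\tau\in(0,\bar\tau]$. Running this for $k=0,1$ controls both $\|\nabla\Theta\|_{L^2}$ and $\|\nabla\Theta_t\|_{L^2}$ and is precisely where the factor $(1+\bar\tau)$ in \eqref{eq16} comes from; the $\bar\tau^2$ is produced by the $\|\Delta\Theta\|_{L^2}$ estimate you already have.
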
   
\begin{proof}
It is clear that we have for all $t\geq 0$  
\begin{equation} \label{eq14}
\mathcal{E}_0[\Theta](t)\leq E^{\tau}[\Theta,q](t).
\end{equation}
Now, if we want to show that $\mathcal{E}_1[\Theta](t)$  is also bounded by $E^{\tau}[\Theta, q](t)$, we can simply make use of the second equation in \eqref{eq_Cattaneo} to get 
\begin{equation}
\kappaa^2\|\nabla \Theta\|_{L^2}^2\leq 2\tau^2\|q_t\|_{L^2}^2+2 \|q\|_{L^2}^2\leq C(\tau)E^{\tau}[\Theta, q].
\end{equation}
However the constant $C(\tau)\rightarrow \infty$ as $\tau\rightarrow 0$. To avoid this, we take the time derivative $\partial_t^k, k=0,1$ of the system \eqref{eq_Cattaneo} to obtain 
\begin{equation}\label{eq_Cattaneo_k_1}
\left\{    
\begin{aligned}
&m\partial_t^k \Theta_t +\nabla\cdot (\partial_t^kq) + \ell \partial_t^k\Theta = \partial_t^k f, \qquad &\text{in} \ \Omega \times (0,T),
\\   
&\tau \partial_t^kq_t+\partial_t^kq+\kappaa \nabla (\partial_t^k \Theta)=0, \qquad &\text{in} \ \Omega \times (0,T).
\end{aligned}
\right.
\end{equation}
We multiply the second equation in \eqref{eq_Cattaneo_k_1} by $\nabla(\partial_t^k \Theta), k=0,1$ and  integrate over $\Om$.  Using the divergence theorem and the fact that $\partial_t^k \Theta|_{\partial \Om}=0$, we obtain  
\begin{equation}\label{nabla_theta}  
\kappaa \Vert \nabla (\partial_t^k \Theta) \Vert_{L^2}^2=-\tau \intO \partial_t^k q_t \cdot \nabla (\partial_t^k \Theta) \dx+\intO \nabla \cdot (\partial_t^k q) \partial_t^k \Theta \dx, \quad k=0, 1.
\end{equation}
Note that we can recover the last term on the right from the first equation in \eqref{eq_Cattaneo_k_1} by testing by $\partial_t^k \Theta$ and integrating over $\Om$. Hence, we get
\begin{equation} \label{q_term} 
\intO \nabla \cdot (\partial_t^k q) \partial_t^k \Theta \dx=-m\intO \partial_t^k \Theta_t \partial_t^k \Theta \dx-\ell \Vert \partial_t^k  \Theta \Vert^2_{L^2}+\intO \partial_t^k f \partial_t^k \Theta \dx. 
\end{equation}
Combining \eqref{nabla_theta} and \eqref{q_term}, we have
\begin{equation}
\begin{aligned}
\kappaa \Vert \nabla (\partial_t^k \Theta) \Vert_{L^2}^2=-\tau &\intO \partial_t^k  q_t \cdot \nabla (\partial_t^k \Theta) \dx-m\intO \partial_t^k \Theta_t \partial_t^k \Theta \dx\\
&-\ell \Vert \partial_t^k  \Theta \Vert^2_{L^2}+\intO \partial_t^k f \partial_t^k  \Theta \dx.
\end{aligned}
\end{equation}
Thus using Cauchy-Schwarz and Young inequalities, it results that   
\begin{equation}\label{eq15_k}
\begin{aligned}
\frac{\kappaa}{2} \Vert \nabla \partial_t^k\Theta \Vert_{L^2}^2+\frac{\ell}{2}\|\partial_t^k\Theta\|_{L^2}^2 &\leq \frac{\tau^2}{2 \kappaa} \Vert \partial_t^kq_t \Vert^2_{L^2} + \frac{m^2}{\ell}\Vert \partial_t^k\Theta_t \Vert^2_{L^2}+\frac{1}{\ell}\Vert \partial_t^kf \Vert^2_{L^2}\\
&\leq  \Big(\frac{2m}{\ell\kappaa} +\frac{\tau}{\kappaa}\Big) E^{\tau}[\Theta, q]+\frac{1}{\ell}\Vert \partial_t^k f \Vert^2_{L^2},\quad k=0,1. 
\end{aligned}
\end{equation}
The above estimate for $k=0$ also implies 
\begin{equation} \label{eq15+}
\frac{\tau \ell}{2}  \Vert \nabla \Theta \Vert_{L^2}^2  \lesssim (\tau+\tau^2) E^{\tau}[\Theta, q]+\tau \Vert  f \Vert^2_{L^2}.
\end{equation}

Next, we focus on estimating the term $\Vert \Delta \Theta \Vert_{L^2}$ in $\mathcal{E}_1[\Theta](t)$. First, we take the time derivative of the first equation in \eqref{eq_Cattaneo} and apply the divergence operator  to the second equation, so that we have
\begin{equation}\label{System_Deriv}
\left\{
\begin{aligned}
&m \Theta_{tt}+\nabla \cdot q_t + \ell \Theta_t=f_t,\\
&\tau \nabla \cdot q_t + \nabla \cdot q + \kappaa \Delta \Theta=0.\\
\end{aligned}
\right.
\end{equation}
We note that the above process of obtaining system \eqref{System_Deriv}  is formal.   This can be made rigorous by first considering smooth approximations for the initial data so that the corresponding solutions will  have the required smoothness that will allow us to pass to the limit to infer that \eqref{System_Deriv} still holds.   \\

Combining these two equations, 
and using the first equation in \eqref{eq_Cattaneo}, we obtain
\begin{equation} \label{telegraph_eq}
\tau m \Theta_{tt}+(m +\tau \ell) \Theta_t +\ell \Theta-\kappaa \Delta \Theta =f+ \tau f_t.
\end{equation}
From here, we infer that
\begin{equation}\label{Delta_Theta_Est}
\begin{aligned}
\kappaa^2 \Vert \Delta \Theta \Vert^2_{L^2} &\lesssim   \ell^2 \Vert \Theta \Vert^2_{L^2}+(m+\tau \ell)^2 \Vert \Theta_t \Vert^2_{L^2}+\tau^2 m^2 \Vert \Theta_{tt} \Vert^2_{L^2}+\Vert f \Vert^2_{L^2}+\tau^2 \Vert f_t \Vert^2_{L^2}\\
&\lesssim (1 +\tau^2)\big(E^{\tau}[\Theta, q]+\Vert f \Vert^2_{L^2}+\Vert f_t \Vert^2_{L^2}\big).\\ 
\end{aligned}
\end{equation}
Putting together the estimates \eqref{eq14}, \eqref{eq15_k}, \eqref{eq15+}, \eqref{Delta_Theta_Est} and noting that $\tau \in (0, \bar{\tau}]$, we get the  estimate \eqref{eq16}.
This completes the proof of Lemma \ref{lemma1}.
\end{proof}
\begin{proposition}\label{total-energy}
Given $T>0$,  and $f \in H^2(0, T; L^2(\Om))$. Assume that the initial data $(\Theta_0,q_0)$ satisfy Assumption \ref{Assumption_compatibility}. 
Then, the system \eqref{eq_Cattaneo} has a unique solution $(\Theta, q) \in X_\Theta \times X_q$. Furthermore, the solution satisfies 
\begin{equation}\label{Pro_1_Estimate}
\begin{aligned}
&\mathcal{E}[\Theta](t) +\int_0^t \mathcal{D}[\Theta](s) \ds \lesssim (1+\bar{\tau}+\bar{\tau}^2) \big(E^{\bar{\tau}}[\Theta, q](0)+ \Vert f \Vert^2_{H^2L^2}\big) 
\end{aligned}     
\end{equation}
and 
\begin{equation} \label{q_estimate}
\begin{aligned}
&\Vert q(t) \Vert_{H^1}^2+  \sum_{k=0}^2 \int^t_0 \Vert \partial_t^k  q(s) \Vert_{L^2}^2 \ds \\
\lesssim &\,  \Vert q_0 \Vert_{H^1}^2 +(1+\bar{\tau}+\bar{\tau}^2)\big( E^{\bar{\tau}}[\Theta, q](0)+\Vert f \Vert^2_{H^2 L^2}\big)\\
 \end{aligned}    
\end{equation}
for all $0 \leq t \leq T$, where the hidden constants in  \eqref{Pro_1_Estimate} and \eqref{q_estimate} are uniform with respect to $\tau\in(0,\bar{\tau}]$. 
\end{proposition}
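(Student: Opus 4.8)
The plan is to prove existence and uniqueness via a Galerkin scheme, and to obtain the estimates \eqref{Pro_1_Estimate}--\eqref{q_estimate} by deriving uniform-in-$\tau$ a priori bounds on the Galerkin approximants that survive the limit. First I would set up the Faedo--Galerkin approximation using the eigenfunctions of the Dirichlet Laplacian as a basis for $\Theta$, and a suitable spatial basis for $q$; the approximate system is a linear ODE system in the coefficients, so local (hence, by the a priori bounds, global on $[0,T]$) solvability of the approximants is immediate. For the differentiated equations to make sense at the approximate level one needs the compatibility conditions from Assumption \ref{Assumption_compatibility}, which supply the initial data $\Theta_1,\Theta_2,q_1,q_2$ for the time-differentiated systems \eqref{eq_Cattaneo_k_1} and \eqref{System_Deriv}.

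The heart of the proof is the energy identity at levels $k=0,1,2$. For each $k$, differentiate \eqref{eq_Cattaneo} $k$ times in $t$, test the first equation with $\kappaa\,\partial_t^k\Theta$ and the second with $\partial_t^k q$, and add: the cross terms $\kappaa\int \nabla\cdot(\partial_t^k q)\,\partial_t^k\Theta$ and $\kappaa\int \partial_t^k q\cdot\nabla(\partial_t^k\Theta)$ cancel after integration by parts (using $\partial_t^k\Theta|_{\partial\Omega}=0$), yielding
\begin{equation}
\ddt E_k[\Theta,q](t) + D_k[\Theta,q](t) = \kappaa\intO \partial_t^k f\,\partial_t^k\Theta\dx \lesssim \Vert\partial_t^k f\Vert_{L^2}^2 + \tfrac{\ell\kappaa}{2}\Vert\partial_t^k\Theta\Vert_{L^2}^2,
\end{equation}
where the last term is absorbed into $D_k$. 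Summing over $k=0,1,2$ and integrating in time gives
\begin{equation}
E^\tau[\Theta,q](t) + \tfrac12\int_0^t D[\Theta,q](s)\ds \lesssim E^\tau[\Theta,q](0) + \Vert f\Vert_{H^2 L^2}^2,
\end{equation}
with a constant independent of $\tau$ (since no negative power of $\tau$ appears; $\tau$ only multiplies terms, and $\tau\le\bar\tau$). Crucially, $E^\tau[\Theta,q](0)\le E^{\bar\tau}[\Theta,q](0)$ because $\tau\mapsto E^\tau$ is monotone in the $\tau\Vert\partial_t^k q(0)\Vert^2$ term. Then I feed this bound into Lemma \ref{lemma1}, which converts control of $E^\tau$ plus $\Vert f\Vert_{L^2}^2+\Vert f_t\Vert_{L^2}^2$ into control of $\mathcal{E}[\Theta](t)$; integrating the pointwise-in-$t$ statement of Lemma \ref{lemma1} over $[0,t]$ (and using the already-established time integral of $D$, which dominates $\int_0^t\mathcal{D}[\Theta]$ up to the $\bar\tau$-prefactor bookkeeping) produces \eqref{Pro_1_Estimate}. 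For \eqref{q_estimate}, the $\int_0^t\Vert\partial_t^k q\Vert_{L^2}^2$ terms are already in $\int_0^t D[\Theta,q]$; the pointwise $\Vert q(t)\Vert_{H^1}^2$ bound comes from $\Vert q\Vert_{L^2}^2\le (2/m\kappaa)E^\tau$ and $\Vert\nabla q\Vert_{L^2}$: one estimates $\nabla q$ either by differentiating the Cattaneo law in space, $\tau\nabla q_t+\nabla q+\kappaa\nabla\nabla\Theta=0$, so $\Vert\nabla q\Vert_{L^2}\lesssim \tau\Vert\nabla q_t\Vert_{L^2}+\kappaa\Vert D^2\Theta\Vert_{L^2}$ — but to avoid the bad $\tau$-dependence and the need for $H^2$-bounds on $q_t$, it is cleaner to test the spatially-differentiated second equation directly, or to use $\Vert\nabla q\Vert_{L^2}\lesssim \Vert\nabla\cdot q\Vert_{L^2}+\Vert\nabla\times q\Vert_{L^2}$ together with $\nabla\cdot q = f - m\Theta_t-\ell\Theta$ from the first equation and a curl estimate from the Cattaneo law ($\nabla\times q = -\tau\nabla\times q_t$, giving $\nabla\times q\equiv e^{-t/\tau}\nabla\times q_0$ pointwise, hence $\Vert\nabla\times q(t)\Vert_{L^2}\le\Vert\nabla\times q_0\Vert_{L^2}$ uniformly in $\tau$), accounting for the $\Vert q_0\Vert_{H^1}^2$ term on the right of \eqref{q_estimate}.

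Uniqueness follows from linearity: the difference of two solutions solves \eqref{eq_Cattaneo} with $f=0$ and zero initial data, so the $k=0$ energy identity gives $E^\tau[\Theta,q]\equiv 0$. Passing from the Galerkin approximants to the limit uses the uniform bounds to extract weak-$*$ convergent subsequences in the spaces defining $X_\Theta\times X_q$, lower semicontinuity of norms to retain the estimates for the limit, and the linearity of the system to identify the limit as a (the) solution; the Aubin--Lions lemma handles any nonlinear terms, but here the system is linear so weak convergence suffices to pass to the limit in every term, and the attainment of initial data follows from the $C([0,T];\cdot)$ regularity implied by the bounds on time derivatives.

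The main obstacle is the uniformity in $\tau$: one must be vigilant that every use of the Cattaneo law to recover spatial regularity of $\Theta$ or $q$ (as in the failed naive estimate $\kappaa^2\Vert\nabla\Theta\Vert_{L^2}^2\le 2\tau^2\Vert q_t\Vert_{L^2}^2+2\Vert q\Vert_{L^2}^2\le C(\tau)E^\tau$ flagged in the proof of Lemma \ref{lemma1}) is routed through the already-hyperbolic estimates rather than through a bare division by $\tau$ — in particular the $\Vert\nabla\Theta\Vert_{L^2}$ and $\Vert\Delta\Theta\Vert_{L^2}$ bounds must come from \eqref{eq15_k} and \eqref{telegraph_eq} as in Lemma \ref{lemma1}, and the $H^1$-bound on $q$ must be obtained without an estimate on $q_t$ in $L^\infty_t(L^2)$ (which would cost a $\tau^{-1}$). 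The pointwise curl identity $\nabla\times q(t)=e^{-t/\tau}\nabla\times q_0$ is the clean device that makes the $\Vert q_0\Vert_{H^1}^2$ term appear with a $\tau$-independent constant.
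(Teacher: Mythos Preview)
Your overall architecture — Galerkin, the $k=0,1,2$ energy identities, summing to get $E^\tau[\Theta,q](t)+\int_0^t D[\Theta,q]\lesssim E^{\bar\tau}[\Theta,q](0)+\Vert f\Vert_{H^2L^2}^2$, then invoking Lemma \ref{lemma1} — matches the paper. The gap is in your treatment of the pointwise $\Vert q(t)\Vert_{H^1}$ bound.

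First, a slip: $E^\tau$ contains $\tfrac{\tau}{2}\Vert q\Vert_{L^2}^2$, not $\tfrac{m\kappaa}{2}\Vert q\Vert_{L^2}^2$, so your claimed bound $\Vert q\Vert_{L^2}^2\le (2/m\kappaa)E^\tau$ is false; the energy only gives $\tau\Vert q(t)\Vert_{L^2}^2\lesssim E^\tau$, which degenerates as $\tau\to 0$. More seriously, the div--curl estimate $\Vert\nabla q\Vert_{L^2}\lesssim \Vert\nabla\cdot q\Vert_{L^2}+\Vert\nabla\times q\Vert_{L^2}$ (plus lower order) is \emph{not} valid on a bounded domain without a boundary condition on $q$ (either $q\cdot n|_{\partial\Omega}=0$ or $q\times n|_{\partial\Omega}=0$); here no boundary condition on $q$ is imposed, so harmonic gradients with nonzero boundary data kill the inequality. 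Your curl identity $\nabla\times q(t)=e^{-t/\tau}\nabla\times q_0$ is correct, but you cannot turn it into an $H^1$ bound this way.

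The paper's fix is to treat the Cattaneo law as a linear ODE in $t$ and write the explicit Duhamel formula
\[
q(x,t)=e^{-t/\tau}q_0(x)-\frac{\kappaa}{\tau}\int_0^t e^{-(t-s)/\tau}\,\nabla\Theta(x,s)\ds,
\]
take the $H^1$ norm in $x$, use elliptic regularity $\Vert\nabla\Theta\Vert_{H^1}\lesssim\Vert\Delta\Theta\Vert_{L^2}$ (which is already controlled in $L^\infty_t$ by \eqref{eq20}), and observe that $\frac{1}{\tau}\int_0^t e^{-(t-s)/\tau}\ds=1-e^{-t/\tau}\le 1$ \emph{uniformly in $\tau$}. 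This is what produces the $\Vert q_0\Vert_{H^1}^2$ on the right of \eqref{q_estimate} with a $\tau$-independent constant, and it simultaneously handles the $L^2$ part of the $H^1$ norm without any appeal to $E^\tau$.
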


We rely on the Faedo--Galerkin approach to prove the result above. In the first step, we show some uniform \textit{a priori} estimates to hold for the approximations of the solution $(\Theta, q)$. Precisely, we take the eigenfunctions of the Dirichlet--Laplacian as approximations for the temperature $\Theta$. As for the heat flux $q$, we exploit the fact that $(H^2(\Om))^d$ is separable, yielding the existence of a dense sequence that can be used to approximate $q$ (see \cite{L69}). In the second step,  due to the fact that the estimates are uniform, we can pass to the limit as  in  \cite[chapter 7]{evans2010partial} (see also  \cite[Chapter 1]{L69}) to obtain the existence of solutions. Further, uniqueness follows by noting that the only solution to the homogeneous problem ($f=0, \Theta_0=0$ and $q_0=0$) is $(\Theta, q)=(0, 0)$. Lastly, from the weak and weak-$\star$ semi-continuity of norms, we get that the estimates \eqref{Pro_1_Estimate}, \eqref{q_estimate} also hold for the solution $(\Theta, q)$. Since the approach is quite classical, our emphasis in the following is on the energy analysis, and  
we refer to \cite{evans2010partial, L69} for the procedure of passing to the limit. To start with, we prove the following intermediate estimates.
\begin{lemma}
Let $\tau\in (0,\bar{\tau}]$. Assume that $f \in H^2(0, T; L^2(\Om))$. Then for all $0 \leq t \leq T$, we have the following estimate 
\begin{equation} \label{estimate_E_k}
\ddt E_k[\Theta,q](t)+D_k[\Theta,q](t) \lesssim \Vert \partial_t^k f(t) \Vert^2_{L^2}, \quad k=0, 1, 2.
\end{equation}
where the hidden constant is independent of $\tau$.
\end{lemma}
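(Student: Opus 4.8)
The plan is to run a standard energy/multiplier argument on the differentiated Cattaneo system \eqref{eq_Cattaneo_k_1}, testing the temperature equation with $\partial_t^k\Theta$ and the flux equation with $\kappaa^{-1}\partial_t^k q$ (the precise weights are chosen to match the definitions \eqref{E_k_def}, \eqref{D_k_def}). For a fixed $k\in\{0,1,2\}$, multiply the first equation in \eqref{eq_Cattaneo_k_1} by $m\kappaa\,\partial_t^k\Theta$ and integrate over $\Om$; multiply the second equation by $\partial_t^k q$ and integrate over $\Om$. In the first identity the conduction term produces $\kappaa\intO \nabla\cdot(\partial_t^k q)\,\partial_t^k\Theta\dx$, which after integration by parts (using $\partial_t^k\Theta|_{\partial\Om}=0$) becomes $-\kappaa\intO \partial_t^k q\cdot\nabla(\partial_t^k\Theta)\dx$; in the second identity the term $\kappaa\intO \nabla(\partial_t^k\Theta)\cdot\partial_t^k q\dx$ appears with the opposite sign. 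Adding the two identities, the cross terms cancel exactly, and one is left with
\begin{equation*}
\ddt\Big(\tfrac{1}{2}m\kappaa\|\partial_t^k\Theta\|_{L^2}^2+\tfrac{\tau}{2}\|\partial_t^k q\|_{L^2}^2\Big)+\ell\kappaa\|\partial_t^k\Theta\|_{L^2}^2+\|\partial_t^k q\|_{L^2}^2=m\kappaa\intO \partial_t^k f\,\partial_t^k\Theta\dx,
\end{equation*}
i.e. $\ddt E_k[\Theta,q]+D_k[\Theta,q]=m\kappaa\intO\partial_t^k f\,\partial_t^k\Theta\dx$. Note that there is genuine dissipation in \emph{both} $\Theta$ and $q$ without any $\tau$-weight on the dissipation, which is exactly what the statement claims and what later makes the bound uniform in $\tau$.

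It remains to absorb the right-hand side. By Cauchy--Schwarz and Young's $\varepsilon$-inequality, $m\kappaa\intO\partial_t^k f\,\partial_t^k\Theta\dx\le \varepsilon\kappaa\|\partial_t^k\Theta\|_{L^2}^2+C(\varepsilon)\,m^2\kappaa\|\partial_t^k f\|_{L^2}^2$; choosing $\varepsilon=\tfrac{\ell}{2}$ lets the first term be absorbed into $\ell\kappaa\|\partial_t^k\Theta\|_{L^2}^2$, leaving $\tfrac12 D_k[\Theta,q]$ on the left and $C\|\partial_t^k f(t)\|_{L^2}^2$ on the right, with $C$ depending only on the fixed medium parameters $m,\ell,\kappaa$ and not on $\tau$. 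This yields \eqref{estimate_E_k} for each $k=0,1,2$. The hypothesis $f\in H^2(0,T;L^2(\Om))$ guarantees that $\partial_t^k f\in L^2(0,T;L^2(\Om))$ for $k=0,1,2$, so the right-hand sides are integrable in time; this is where that regularity assumption is used.

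The only real subtlety — and the step I would flag as the main obstacle — is that the computation above is formal: it presupposes that $\partial_t^k\Theta$, $\partial_t^k q$ for $k=1,2$ exist and are regular enough to be used as test functions, which is not known a priori for the weak solution. As the surrounding text already indicates (the remark following \eqref{System_Deriv}), this is handled by carrying out the estimate at the level of the Faedo--Galerkin approximations, where $\partial_t^k\Theta_N$ and $\partial_t^k q_N$ are smooth in time by construction (obtained by differentiating the finite-dimensional ODE system in $t$, after first smoothing the data if necessary). The identity and the resulting differential inequality \eqref{estimate_E_k} hold for the approximations with constants independent of the discretization and of $\tau$; then one passes to the limit, using weak/weak-$\star$ lower semicontinuity of the $L^2$-norms in the energy and dissipation terms and strong convergence of the data, exactly as referenced for Proposition \ref{total-energy}. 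No new ideas beyond what is already set up in the excerpt are needed for this lemma; it is the base estimate that \eqref{E_total}-level Gronwall arguments and Lemma \ref{lemma1} will be built on.
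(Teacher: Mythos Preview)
Your proposal is correct and takes essentially the same approach as the paper: test the $k$-th time-differentiated system with $(\kappaa\,\partial_t^k\Theta,\ \partial_t^k q)$, cancel the cross terms via integration by parts and the Dirichlet boundary condition, and absorb the source by Young's inequality. There is only a minor slip in your description---you say to multiply the first equation by $m\kappaa\,\partial_t^k\Theta$, but the identity you then display (and the paper's) corresponds to the multiplier $\kappaa\,\partial_t^k\Theta$; the spurious factor of $m$ on your right-hand side is harmless since it is absorbed into the hidden constant.
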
    
\begin{proof}
Applying $\partial_t^k,\ k=0,1,2$ to system \eqref{eq_Cattaneo}, we get 
 \begin{equation}\label{eq_Cattaneo_k}
 \left\{
\begin{aligned}
&m\partial_t^k\Theta_t +\nabla\cdot \partial_t^kq + \ell \partial_t^k\Theta = \partial_t^kf, \qquad &\text{in} \ \Omega \times (0,T),
\\   
&\tau \partial_t^kq_t+\partial_t^kq+\kappaa \nabla \partial_t^k\Theta=0, \qquad &\text{in} \ \Omega \times (0,T).
\end{aligned}
\right.
\end{equation}
Multiplying the first equation in \eqref{eq_Cattaneo_k} by $ \kappaa \partial_t^k \Theta$ and integrating over $\Om$, it follows
\begin{equation} \label{eq3_k}
\frac{m \kappaa}{2} \ddt \Vert \partial_t^k\Theta \Vert^2_{L^2}+\kappaa \intO \partial_t^k\Theta \nabla \cdot \partial_t^k q \dx +\ell \kappaa \Vert \partial_t^k\Theta \Vert^2_{L^2}=\kappaa \intO \partial_t^k f \partial_t^k\Theta \dx.
\end{equation}
Using the divergence theorem on the second term on the left and the fact that $\partial_t^k\Theta|_{\partial\Omega}=0$ (due to \eqref{homog_dirichlet}), we obtain
\begin{equation}\label{eq1}
    \frac{m \kappaa}{2} \ddt \Vert \partial_t^k\Theta \Vert^2_{L^2}-\kappaa\intO \partial_t^kq \cdot \nabla \partial_t^k\Theta \dx+\ell \kappaa \Vert \partial_t^k\Theta \Vert^2_{L^2}=\kappaa\intO \partial_t^kf \partial_t^k\Theta \dx.
\end{equation}
Next, multiplying the second equation in \eqref{eq_Cattaneo_k} by $\partial_t^kq$ and integrating over $\Omega$,  we get
\begin{equation} \label{eq2}    
    \frac{\tau}{2} \ddt \Vert \partial_t^k q \Vert^2_{L^2}+ \Vert \partial_t^k q \Vert^2_{L^2}+\kappaa \intO \partial_t^k q \cdot \nabla \partial_t^k \Theta \dx=0.
\end{equation}
Collecting  \eqref{eq1} and \eqref{eq2} yields
\begin{equation}
\ddt E_k[\Theta,q]+D_k[\Theta,q]=\kappaa\intO \partial_t^k f \partial_t^k \Theta \dx, 
\end{equation}
where $E_k[\Theta,q]$ and $ D_k[\Theta,q]$ are given by \eqref{E_k_def}, \eqref{D_k_def}, respectively. Using  Cauchy--Schwarz inequality together with Young's inequality on the right-hand side gives the desired estimate \eqref{estimate_E_k}. 
\end{proof}  
 
\begin{remark}
Let $f\in H^2(0,T,L^2(\Omega))$. Recalling \eqref{E_Total}--\eqref{D_k_def} and using  \eqref{estimate_E_k}, we obtain 
\begin{equation} \label{tolal_energy_E}
\ddt E^{\tau}[\Theta, q](t)+ cE^{\tau}[\Theta, q](t)\lesssim \|f (t) \|^2_{L^2}+\|f_t(t) \|^2_{L^2}+\|f_{tt}(t) \|^2_{L^2}
\end{equation}
with 
\begin{equation}
c=\min\left\{\frac{\ell}{m}, \frac{2}{\tau}\right\}.
\end{equation}
Multiplying \eqref{tolal_energy_E} by $e^{ct}$, we obtain
\begin{equation}
\ddt\left(e^{ct}E^{\tau}[\Theta, q]\right)(t)\lesssim e^{ct}\big(\|f (t) \|^2_{L^2}+\|f_t(t) \|^2_{L^2}+\|f_{tt}(t) \|^2_{L^2} \big), \quad t \geq 0
\end{equation}
Hence, integrating in time,  we have
\begin{equation} \label{time_integrated_E}
\begin{aligned}    
E^{\tau}[\Theta, q](t)&\lesssim e^{-ct} E^{\tau}[\Theta, q](0)+\int_0^t e^{-c(t-s)}(\|f (s) \|^2_{L^2}+\|f_t(s) \|^2_{L^2}+\|f_{tt}(s) \|^2_{L^2})\ds\\
& \lesssim e^{-ct} E^{\tau}[\Theta, q](0)+\frac{1-e^{-ct}}{c}\| f\|_{H^2L^2}^2.\\
\end{aligned}
\end{equation} 
It is clear from \eqref{time_integrated_E} that $E^{\tau}[\Theta, q](t)$ has an exponential decay.
\end{remark}

\subsection{Proof of Proposition \ref{total-energy}}\label{Proof_Prop_1}

First, we integrate over time the estimate \eqref{estimate_E_k}, sum up over $k=0,1,2$ and recall  \eqref{E_Total} and \eqref{D_total} to find 
\begin{equation}\label{E_Main_Estimate}
E^{\tau}[\Theta,q](t)+\int_0^t D[\Theta,q](s) \ds \lesssim E^{\tau}[\Theta, q](0)+\| f\|_{H^2L^2}^2,
\end{equation}
which also implies, by recalling  \eqref{E_0} and  \eqref{D_0} that   
\begin{equation} \label{estimate_E_0}
\mathcal{E}_0[\Theta](t)+\int_0^t \mathcal{D}_0[\Theta](s) \ds \lesssim E^{\tau}[\Theta, q](0)+\| f\|_{H^2L^2}^2.
\end{equation}
On the other hand, applying the divergence operator to the second equation in \eqref{eq_Cattaneo} and testing the resulting equation by $\Delta \Theta $, we obtain for all $t \geq 0$ 
\begin{equation} \label{eq8}
\tau \intO \nabla \cdot q_t \Delta \Theta \dx+\intO\nabla \cdot q \Delta \Theta \dx+\kappaa \Vert \Delta \Theta \Vert^2_{L^2}=0.
\end{equation}
Next, we multiply the first equation in \eqref{eq_Cattaneo} by $\Delta \Theta$ and  integrate by parts to get 
\begin{equation} \label{eq6}
\begin{aligned}
\intO\nabla \cdot q \Delta \Theta \dx 
&=\frac{m}{2} \ddt \Vert \nabla \Theta \Vert^2_{L^2}+\ell \Vert \nabla \Theta \Vert^2_{L^2}+\intO f \Delta \Theta \dx.
\end{aligned}
\end{equation}
Further, differentiating the first equation in \eqref{eq_Cattaneo} with respect to $t$, we find 
\begin{equation} \label{Theta_t_Eq}
m \Theta_{tt}+\nabla \cdot q_t+\ell \Theta_t=f_t.
\end{equation}
Testing the above equation  by $\tau \Delta \Theta$ and integrating over $\Om$, it follows
\begin{equation} \label{eq7}
\begin{aligned}
\tau \intO\nabla \cdot q_t \Delta \Theta \dx&=-\tau m \intO \Theta_{tt} \Delta \Theta \dx+\frac{\tau \ell}{2} \ddt \Vert \nabla \Theta \Vert^2_{L^2}+\tau \intO f_t \Delta \Theta \dx.\\
\end{aligned}
\end{equation}  
Plugging \eqref{eq6} and \eqref{eq7} into \eqref{eq8}, we deduce 
\begin{equation} \label{laplacian_identity}
\begin{aligned}
&\frac{1}{2}(m+\tau \ell) \ddt \Vert \nabla \Theta \Vert^2_{L^2}+ \ell \Vert \nabla \Theta \Vert^2_{L^2}+\kappaa \Vert \Delta \Theta \Vert^2_{L^2}\\
=\tau  & m \intO \Theta_{tt} \Delta \Theta \dx-\intO f \Delta \Theta \dx-\tau \intO f_t \Delta \Theta \dx.\\
\end{aligned}
\end{equation}
Therefore, by applying Cauchy-Schwarz and Young inequalities, we have
\begin{equation} \label{laplacian_estimate_1} 
\frac{1}{2}(m+\tau \ell) \ddt \Vert \nabla \Theta \Vert^2_{L^2}+ \ell \Vert \nabla \Theta \Vert^2_{L^2}+\frac{\kappaa}{4} \Vert \Delta \Theta \Vert^2_{L^2} \lesssim \tau^2 \Vert \Theta_{tt} \Vert^2_{L^2}+\Vert f \Vert^2_{L^2}+ \tau^2 \Vert f_t \Vert^2_{L^2}.
\end{equation}
Integrating with respect to $t$, we get
\begin{equation} \label{eq33} 
\begin{aligned}
& \frac{1}{2}(m+\tau \ell) \Vert \nabla \Theta(t) \Vert^2_{L^2}+ \ell \int_0^t\Vert \nabla \Theta \Vert^2_{L^2} \ds+\frac{\kappaa}{4} \int_0^t \Vert \Delta \Theta \Vert^2_{L^2} \ds \\
\lesssim &\, \frac{1}{2}(m+\tau \ell) \Vert \nabla \Theta_0 \Vert^2_{L^2}+\tau^2 \int_0^t  \Vert \Theta_{tt} \Vert^2_{L^2} \ds+\Vert f \Vert^2_{L^2 L^2}+ \tau^2 \Vert f_t \Vert^2_{L^2 L^2}.
\end{aligned}
\end{equation}
Using inequality \eqref{eq16}, we have for all $\tau\in(0,\bar{\tau}]$
\begin{equation}
\begin{aligned}
(1+\tau) \Vert \nabla \Theta (t)\Vert_{L^2}^2 &\lesssim (1+\bar{\tau} +\bar{\tau}^2) \big(E^{\bar{\tau}}[\Theta, q](t)+ \Vert f(t) \Vert^2_{H^1L^2}\big).\\
\end{aligned}
\end{equation}
Further, according to \eqref{1D_embedding_1}, we have
%
\begin{equation}
\Vert \partial_t^k f(0) \Vert^2_{L^2}  \leq C_T\big(\Vert \partial_t^k   f \Vert^2_{L^2L^2}+\Vert \partial_t^k  f_t \Vert^2_{L^2L^2}\big), \quad k=0, 1.
\end{equation}
Then, it follows that
\begin{equation}\label{grad_Theta_0_Estimate}
(1+\tau) \Vert \nabla \Theta_0 \Vert_{L^2}^2 \lesssim (1+\bar{\tau}+\bar{\tau}^2)\big(E^{\bar{\tau}}[\Theta, q](0)+\Vert f \Vert^2_{H^2L^2}\big).  
\end{equation}
Moreover, from estimate \eqref{E_Main_Estimate}, we have 
\begin{equation} \label{eq34}
\tau^2 \int_0^t \Vert \Theta_{tt} \Vert^2_{L^2} \ds \lesssim  \bar{\tau}^2 \big(E^{\bar{\tau}}[\Theta, q](0)+\Vert f \Vert^2_{H^2 L^2}\big).
\end{equation}
Combining the estimates  \eqref{eq33}, \eqref{grad_Theta_0_Estimate} and \eqref{eq34},  we obtain 
\begin{equation}\label{gradient_estimate} 
\begin{aligned}
 \frac{1}{2}(m+&\tau \ell) \Vert \nabla \Theta(t) \Vert^2_{L^2}+ \ell \int_0^t \Vert \nabla \Theta \Vert^2_{L^2}\ds+\frac{\kappaa}{4} \int_0^t\Vert \Delta \Theta \Vert^2_{L^2}\ds \\
\lesssim &\, (1+\bar{\tau}+\bar{\tau}^2)\big(E^{\bar{\tau}}[\Theta, q](0)+\Vert f \Vert^2_{H^2L^2}\big).
\end{aligned}
\end{equation}
Next, according to the first inequality in \eqref{eq15_k} for $k=1$ and  the estimate \eqref{E_Main_Estimate}, we have for all $t\geq 0$
\begin{equation}\label{theta_t_estimate}
\begin{aligned}
\kappaa \int_0^t \Vert \nabla \Theta_t (s)\Vert^2_{L^2} \ds  & \lesssim \tau^2 \int_0^t  \Vert q_{tt}(s)\Vert^2_{L^2} \ds +\int_0^t \Vert \Theta_{tt} (s)\Vert^2_{L^2} \ds+\Vert f_t \Vert^2_{L^2L^2}\\
& \lesssim (1+\bar{\tau}^2) (E^{\bar{\tau}}[\Theta, q](0)+\Vert f \Vert^2_{H^2 L^2}).
\end{aligned}
\end{equation} 
On the other hand, thanks to the 1D embedding $H^1(0, T) \hookrightarrow L^\infty (0,T)$, we have $f, f_t \in L^\infty(0, T; L^2(\Om))$ (see inequality \eqref{1D_embedding_1}). Then, the estimate \eqref{eq16} together with \eqref{E_Main_Estimate} gives for all $t \geq 0$ 
\begin{equation}\label{eq9} 
\begin{aligned}   
\kappaa \Vert \nabla \Theta_t (t)\Vert^2_{L^2}   \lesssim &\, (1+\bar{\tau}+\bar{\tau}^2) \big(E^{\bar{\tau}}[\Theta,q](0)+\Vert f \Vert^2_{H^2 L^2}\big). 
\end{aligned}  
\end{equation}
Similarly, we make use of \eqref{eq16}, \eqref{E_Main_Estimate} and the fact that $f, f_t \in L^\infty(0, T; L^2(\Om))$ to get 
\begin{equation} \label{eq20} 
\begin{aligned}
\kappaa \Vert \Delta \Theta (t)\Vert^2_{L^2}  \lesssim &\, (1+\bar{\tau}+\bar{\tau}^2) \big(E^{\bar{\tau}}[\Theta,q](0)+\Vert f \Vert^2_{H^2 L^2}\big). 
\end{aligned}  
\end{equation}
Collecting the estimates \eqref{estimate_E_0}, \eqref{gradient_estimate}, \eqref{theta_t_estimate}, \eqref{eq9} and \eqref{eq20}, we arrive at the estimate \eqref{Pro_1_Estimate}.

To establish estimate \eqref{q_estimate}, first observe that from \eqref{E_Main_Estimate}, we immediately have for $t \geq 0$ 
\begin{equation} \label{q_derivatives} 
\int_0^t (\Vert q(s) \Vert_{L^2}^2+\Vert q_{t}(s) \Vert_{L^2}^2+ \Vert q_{tt}(s) \Vert_{L^2}^2) \ds \lesssim E^{\tau}[\Theta,q](0)+ \Vert f \Vert_{H^2L^2}^2. 
\end{equation}
Thus, it remains to prove that $q$ is in $(H^1(\Om))^d$. From the second equation in \eqref{eq_Cattaneo}, we obtain
\begin{equation} \label{solution_q}
q(x,t)=q_0(x)-\frac{\kappaa}{\tau} \int_{0}^t \nabla \Theta e^{-(t-s)/\tau}\ds,
\end{equation}
which implies that for all $t \geq 0$
\begin{equation}
\Vert q(t) \Vert_{H^1} \leq \Vert q_0 \Vert_{H^1}+ \frac{\kappaa}{\tau} \int_0^t e^{-(t-s)/\tau} \Vert \nabla \Theta(s) \Vert_{H^1} \ds. 
\end{equation}
Using elliptic regularity, one finds
\begin{equation}
\begin{aligned} 
\Vert q(t) \Vert_{H^1} & \leq \Vert q_0 \Vert_{H^1}+\kappaa(1-e^{-t/\tau}) \Vert \Delta \Theta \Vert_{L^\infty L^2}\\
& \lesssim  \Vert q_0 \Vert_{H^1}+ \Vert \Delta \Theta \Vert_{L^\infty L^2}.
\end{aligned}
\end{equation}
This last estimate along with \eqref{eq20} gives
\begin{equation}\label{q_H1}
\begin{aligned}
\Vert q(t) \Vert_{H^1}^2 \lesssim \Vert q_0 \Vert_{H^1}^2+(1+\bar{\tau}+\bar{\tau}^2) \big( E^{\bar{\tau}}[\Theta, q](0)+\Vert f \Vert^2_{H^2 L^2} \big).
\end{aligned}
\end{equation}
Summing up the estimates \eqref{q_derivatives} and \eqref{q_H1}, we obtain    \eqref{q_estimate}. Further, from \eqref{Pro_1_Estimate}, \eqref{q_estimate}, we deduce that $(\Theta, q) \in X_\Theta \times X_q$. This concludes the proof of Proposition \ref{total-energy}. 
\section{The linearized Westervelt equation} \label{sect5}
In this section, we consider the linearization of the Westervelt equation in \eqref{Main_system}: 
\begin{equation} \label{linear_West}
    \alpha(x,t) p_{tt}-r(x,t) \Delta p-b \Delta p_t=g(x,t), \qquad x \in \Om, \quad t \geq 0,
\end{equation}
supplemented by the relevant initial conditions in  \eqref{init_cond} and homogeneous Dirichlet boundary condition \eqref{homog_dirichlet}.
 
We define the energies associated to \eqref{linear_West}  as follows
\begin{equation}
    \begin{aligned}
        E_1[p](t)&:=\frac{1}{2} \big(\Vert \sqrt{\alpha(t)}p_t(t) \Vert^2_{L^2}+\Vert \sqrt{r(t)}\nabla p(t) \Vert^2_{L^2} \big),\\
        E_2[p](t)&:=\frac{1}{2} \big(\Vert \sqrt{\alpha(t)}p_{tt}(t) \Vert^2_{L^2}+\Vert \sqrt{r(t)}\nabla p_t(t) \Vert^2_{L^2} + \Vert \sqrt{b}\Delta p(t) \Vert^2_{L^2} \big),\\   
        E_3[p](t)&:=\frac{1}{2} \big(\Vert \sqrt{b} \nabla p_{tt}(t) \Vert_{L^2}^2+\Vert \sqrt{b} \nabla \Delta p(t) \Vert^2_{L^2} \big).\\
    \end{aligned}
\end{equation}
The total acoustic energy is given by
\begin{equation}\label{Energy_Tot}
    \mathfrak{E}[p](t):=\sum_{k=1}^3 E_k[p](t), \qquad t\geq 0.
\end{equation}
We denote by  $\mathfrak{D}[p]$  its  associated dissipation rate given by 
\begin{equation}\label{dissipation}
\begin{aligned}
\mathfrak{D}[p](t):=\mathfrak{D}_0[p](t)+ b \Vert \nabla \Delta p_t(t)  \Vert^2_{L^2} +b \Vert \Delta p_{tt}(t) \Vert^2_{L^2}, \qquad t\geq 0
\end{aligned}
\end{equation}
where $\mathfrak{D}_0[p]$ is given by 
\begin{equation}\label{dissipation_D_0}
\begin{aligned}
\mathfrak{D}_0[p](t):=b \Vert \nabla p_t(t)& \Vert^2_{L^2}+b \Vert \nabla p_{tt}(t) \Vert^2_{L^2}+\Vert \sqrt{r(t) } \Delta p(t) \Vert_{L^2}^2+b \Vert \Delta p_t(t) \Vert^2_{L^2}\\
&+\Vert \sqrt{r(t) } \nabla \Delta p(t) \Vert^2_{L^2} +\Vert \sqrt{\alpha(t) } p_{ttt}(t) \Vert^2_{L^2}.\\
\end{aligned}
\end{equation}
We make the following regularity and non-degeneracy assumptions on the coefficients $\alpha$ and $ r$.
\begin{assumption}\label{Assumption_1}
Assume that
\begin{enumerate}
\item [(A)]
\begin{itemize}
\item $\alpha \in L^{\infty}(0, T; L^\infty (\Om)) \cap L^2(0,T; W^{1,3}(\Om))$,
\item $\alpha_t \in L^2(0, T;L^3(\Om)) \cap L^{\frac{4}{4-d}}(0, T; L^2(\Om))$,
\item There exist $0<\alpha_0\leq \alpha_1$ such that 
\begin{equation}
\alpha_0 \leq \alpha(x, t) \leq \alpha_1 \quad \text{a.e. in}\quad \Om \times (0, T).
\end{equation}
\end{itemize}
\item [(R)]
\begin{itemize}
\item $r \in L^{\infty}(0, T; L^\infty (\Om)) \cap L^2(0,T; W^{1,3}(\Om))$,
\item $ r_t \in L^2(0, T;L^3(\Om)) \cap L^{\frac{4}{4-d}}(0, T; L^2(\Om))$,
\item There exist $0<r_0\leq r_1$
\begin{equation}
r_0 \leq r(x, t) \leq r_1 \quad \text{a.e. in} \quad \Om \times (0, T).
\end{equation}
\end{itemize}  
\end{enumerate}
\end{assumption}
The main result of this section reads as follows. 
\begin{proposition} \label{prop2} 
Given $T>0$ and $g \in H^1(0, T; L^2(\Om)) \cap L^2(0, T; H^1_0(\Om))$. Let $(p_0, p_1)$ satisfy Assumption \ref{Assumption_compatibility}. 
Then under Assumption \ref{Assumption_1}, the linearized pressure equation \eqref{linear_West} has a unique solution $p \in X_p$, which satisfies for all $0 \leq t \leq T$
\begin{equation} \label{total_energy_p}
\begin{aligned}    
\mathfrak{E}[p](t)+b\Vert \Delta p_t(t) \Vert^2_{L^2}+\int_0^t \mathfrak{D}[p](s) \ds 
\lesssim&\, \mathfrak{E}[p](0) \exp \Big(\int_0^t (1+ \Lambda(s)) \ds \Big)\\
&+\int_0^t \mathfrak{F}(s) \exp \Big( \int_s^t (1+\Lambda (\sigma)) \textup{d} \sigma \Big) \ds\\
\end{aligned}
\end{equation}
where 
\begin{equation}\label{lambda}
\begin{aligned}
\Lambda(t)=\Vert \alpha_t(t) \Vert_{L^2}^2 &+\Vert \alpha_t(t) \Vert_{L^2}^{\frac{4}{4-d}}+\Vert r_t(t) \Vert_{L^2}^{\frac{4}{4-d}}+\Vert \nabla r(t) \Vert_{L^2}^2+\Vert  r_t(t) \Vert_{L^3}^2 \\
&+\Vert  \alpha_t(t) \Vert_{L^3}^2+ \Vert \nabla r(t) \Vert_{L^3}^2+\Vert \nabla \alpha(t) \Vert_{L^3}^2,\\
\end{aligned}
\end{equation}
and
\begin{equation}\label{source_term}
\mathfrak{F}(t)=\Vert \nabla g(t) \Vert_{L^2}^2+\Vert g_t(t) \Vert^2_{L^{2}}.
\end{equation}
\end{proposition}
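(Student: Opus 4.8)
\emph{Proof plan.} The plan is to establish \eqref{total_energy_p} first for Galerkin approximations and then pass to the limit, in the same spirit as the proof of Proposition~\ref{total-energy}.

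\textbf{Step 1 (Galerkin discretization).} I would look for $p^n(t)=\sum_{j=1}^n c^n_j(t)\phi_j$, where $\{\phi_j\}_{j\ge 1}$ is the $L^2(\Om)$-orthonormal basis of Dirichlet--Laplacian eigenfunctions, and test \eqref{linear_West} against each $\phi_j$. Since $\alpha,r\in L^\infty(0,T;L^\infty(\Om))$ with $\alpha\ge\alpha_0>0$, the resulting second-order linear ODE system for $(c^n_j)$ has an invertible, essentially bounded mass matrix and coefficients that are $L^2(0,T)$ (hence integrable) in time, so Carath\'eodory's theorem yields a unique $p^n$ on $[0,T]$. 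The data $p^n(0),p^n_t(0)$ are taken to be the Galerkin projections of $p_0,p_1$, and Assumption~\ref{Assumption_compatibility} guarantees that $\partial_t^2 p^n(0)$, recovered from the discretized equation, is bounded in $H^1_0(\Om)$ uniformly in $n$ -- which is what initializes the higher-order energies. Because $\Delta\phi_j=-\lambda_j\phi_j\in H^1_0(\Om)$, the functions $\Delta p^n$, $\nabla\Delta p^n$, $\Delta^2 p^n$ all lie in $H^1_0(\Om)$, so every integration by parts performed below is legitimate at the discrete level.

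\textbf{Step 2 (energy hierarchy and differential inequality).} On $p^n$ (I drop the superscript) I would test \eqref{linear_West}, in turn, by $p_t$, by $-\Delta p$, by $-\Delta p_t$ and by $\Delta^2 p_t$, and the once-time-differentiated equation $\alpha p_{ttt}+\alpha_t p_{tt}-r\Delta p_t-r_t\Delta p-b\Delta p_{tt}=g_t$ by $p_{tt}$ and by $-\Delta p_{tt}$. Each test produces, after integrating by parts in space (and occasionally once in time to move a derivative off $p_{ttt}$), an identity of the shape $\ddt(\text{energy piece})+(\text{dissipation piece})=(\text{commutators})+(\text{source})$; suitable weighted sums of these reassemble the dissipation pieces into $\mathfrak{D}[p]$, the energy pieces into $\mathfrak{E}[p]$ together with the extra $b\nLtwo{\Delta p_t}^2$ (which appears when a $\Delta p_t\,\Delta p_{tt}$ pairing is integrated in time), while the source terms, handled by Cauchy--Schwarz and Young's inequality, contribute exactly $\mathfrak{F}(t)$ from \eqref{source_term}. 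Solving the once-differentiated equation for $p_{ttt}$ and using $\alpha\ge\alpha_0$ yields the $\nLtwo{\sqrt\alpha\,p_{ttt}}^2$ term of $\mathfrak{D}_0[p]$. Altogether I expect to reach
\begin{equation*}
\ddt\Big(\mathfrak{E}[p](t)+b\nLtwo{\Delta p_t(t)}^2\Big)+c\,\mathfrak{D}[p](t)\le C\big(1+\Lambda(t)\big)\mathfrak{E}[p](t)+C\,\mathfrak{F}(t)
\end{equation*}
with $c>0$ and $\Lambda$ as in \eqref{lambda}, and then conclude \eqref{total_energy_p} via the Gronwall Lemma~\ref{gronwall} (with $v=c\,\mathfrak{D}[p]$ and integrating-factor exponent $A(t)=\int_0^t C(1+\Lambda(s))\ds$). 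Since $\Lambda,\mathfrak{F}\in L^1(0,T)$ under Assumption~\ref{Assumption_1} and the hypotheses on $g$, this bound is uniform in $n$; weak and weak-$\star$ compactness then produces a limit $p\in X_p$ solving \eqref{linear_West}, the estimate passes to the limit by lower semicontinuity of norms, and uniqueness follows by applying the estimate to the difference of two solutions (zero data, zero source).

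\textbf{The main obstacle} is controlling the commutator terms carrying the variable coefficients, i.e.\ the integrals involving $\alpha_t,r_t$ (only in $L^2\cap L^3$ in space) and $\nabla\alpha,\nabla r$ (only in $L^3$), each paired with one ``low-order'' and one ``top-order'' factor. Each such integral must be split by H\"older's inequality in an $L^3$--$L^6$--$L^2$ or $L^2$--$L^4$--$L^4$ pattern, using the embeddings $H^1(\Om)\hookrightarrow L^4(\Om)$ and $H^1(\Om)\hookrightarrow L^6(\Om)$, Ladyzhenskaya's inequality \eqref{lady}, and elliptic regularity $\nHtwo{v}\lesssim\nLtwo{\Delta v}$, so that Young's $\varepsilon$-inequality absorbs the top-order factor (a gradient or Laplacian of $p_t$, $p_{tt}$, or $\Delta p$) into $\mathfrak{D}[p]$ with a small constant while leaving behind a temporal weight of the form $\nLtwo{\alpha_t}^2$, $\nLtwo{\alpha_t}^{4/(4-d)}$, $\nLtwo{r_t}^{4/(4-d)}$, $\|r_t\|_{L^3}^2$, $\|\alpha_t\|_{L^3}^2$, $\|\nabla r\|_{L^3}^2$, $\|\nabla\alpha\|_{L^3}^2$ or $\nLtwo{\nabla r}^2$ -- exactly the summands of $\Lambda$ in \eqref{lambda}, all of which are $L^1(0,T)$ under Assumption~\ref{Assumption_1}. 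Arranging that none of these leftover weights ever multiplies something outside $\mathfrak{E}[p]$ (otherwise Gronwall fails) while keeping it integrable in $t$ is what dictates the precise mixed exponents in $\Lambda$, and is the only genuinely delicate point; the remainder is careful bookkeeping of the energy/dissipation pieces and of signs.
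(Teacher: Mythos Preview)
Your overall strategy---Galerkin approximation, a hierarchy of energy identities, commutator control via H\"older/Ladyzhenskaya/Young so that each leftover weight lands in $\Lambda$, then Gronwall and weak limits---is exactly the paper's, and your discussion of the commutators is accurate. The gap is in the list of multipliers.

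With the six tests you name ($p_t$, $-\Delta p$, $-\Delta p_t$, $\Delta^2 p_t$ on \eqref{linear_West}; $p_{tt}$, $-\Delta p_{tt}$ on the time-differentiated equation) you recover $\ddt E_1[p]$ and $\ddt E_2[p]$, but no identity in your list produces $\ddt E_3[p]=\tfrac{1}{2}\ddt\big(b\|\nabla p_{tt}\|_{L^2}^2+b\|\nabla\Delta p\|_{L^2}^2\big)$. The paper obtains these two pieces from two multipliers you omit: testing \eqref{linear_West} by $\Delta^2 p$ gives $\tfrac12\ddt\|\sqrt b\,\nabla\Delta p\|_{L^2}^2+\|\sqrt r\,\nabla\Delta p\|_{L^2}^2=\ldots$, and testing \eqref{linear_West_t} by $p_{ttt}$ gives $\tfrac{b}{2}\ddt\|\nabla p_{tt}\|_{L^2}^2+\|\sqrt\alpha\,p_{ttt}\|_{L^2}^2=\ldots$. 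Without them the scheme does not close: your ``solve the time-differentiated equation for $p_{ttt}$'' yields only $\|p_{ttt}\|_{L^2}^2\lesssim\ldots+\|\Delta p_{tt}\|_{L^2}^2$, while the $-\Delta p_{tt}$ test yields $\|\Delta p_{tt}\|_{L^2}^2\lesssim\|\sqrt\alpha\,p_{ttt}\|_{L^2}^2+\ldots$, which is circular. Likewise the algebraic estimate coming from $\Delta^2 p_t$ bounds $\|\nabla\Delta p_t\|_{L^2}^2$ in terms of $E_3[p]$, so it cannot replace a differential inequality for $E_3[p]$ itself; and trying instead to extract $\ddt\|\sqrt r\,\nabla\Delta p\|_{L^2}^2$ from that test leaves the commutator $\int_\Omega r_t|\nabla\Delta p|^2\,\textup{d}x$, which would require $H^4$-control of $p$ that you do not have. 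Once you add the two missing multipliers, the paper then uses your $-\Delta p_{tt}$ and $\Delta^2 p_t$ tests algebraically (with small prefactors so that $\|\sqrt\alpha\,p_{ttt}\|_{L^2}^2$ absorbs the circular term) to upgrade $\mathfrak D_0[p]$ to the full $\mathfrak D[p]$; and the extra $b\|\Delta p_t(t)\|_{L^2}^2$ in \eqref{total_energy_p} comes not from time-integrating a $\Delta p_t\,\Delta p_{tt}$ pairing but from the algebraic bound \eqref{Delta_p_t} combined with \eqref{1D_embedding}.
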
     
We carry out the proof of Proposition \ref{prop2} by means of the Faedo-Galerkin method using the smooth eigenfunctions of the Dirichlet-Laplacian as approximations of the solution in space, see \cite{evans2010partial}. Below, we present the energy analysis for the mentioned approximations, which leads to  
 uniform \textit{a priori} estimates, and we direct the reader interested in the process of taking the limit towards  \cite[chapter 7]{evans2010partial}.  \\
 Before embarking on the proof of Proposition \ref{prop2}, we first prove two lemmas.  
\begin{lemma} \label{lemma2}
Given $g \in  L^2(\Om)$ such that $g_t \in L^{2}(\Om)$. Let Assumption \ref{Assumption_1} hold. Then for all $t \geq 0$, we have the following energy estimate
\begin{equation} \label{estimate_E_1_2_p}
    \begin{aligned}
       & \ddt \Big(E_1[p](t)+E_2[p](t)\Big)+b \Vert \nabla p_t(t) \Vert^2_{L^2}\\
        &+b \Vert \nabla p_{tt}(t) \Vert^2_{L^2}+\Vert \sqrt{r(t)} \Delta p(t) \Vert_{L^2}^2+b \Vert \Delta p_t(t) \Vert^2_{L^2}\\
        \lesssim&\, (1+\Lambda_1(t))(E_1[p](t)+ E_2[p](t))+\Vert g(t) \Vert_{L^2}^2+\Vert g_t(t) \Vert^2_{L^2}.\\
    \end{aligned}
\end{equation}
where 
\begin{equation}
\begin{aligned}
\Lambda_1(t)=&\,\Vert  r_t(t) \Vert_{L^3}^2+\Vert \nabla r(t) \Vert_{L^3}^2+\Vert r_t(t) \Vert_{L^2}^{\frac{4}{4-d}}+\Vert \nabla r(t) \Vert_{L^2}^2\\
&+\Vert \alpha_t(t) \Vert_{L^2}^{\frac{4}{4-d}}+ \Vert \alpha_t(t) \Vert_{L^2}^2.
\end{aligned}
\end{equation}
\end{lemma}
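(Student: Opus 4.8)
The plan is to derive the energy identity for $E_1[p]$ and $E_2[p]$ by testing the linearized equation \eqref{linear_West} (and its time-differentiated version) with the natural multipliers, then absorb the error terms using the Sobolev embeddings and Assumption \ref{Assumption_1}. Concretely, for $E_1[p]$ I would test \eqref{linear_West} with $p_t$ and integrate over $\Om$: the terms $\alpha p_{tt}\cdot p_t$ and $-r\Delta p\cdot p_t$ assemble (after integration by parts in the latter, using $p|_{\partial\Om}=0$) into $\ddt E_1[p]$ up to the two ``coefficient-derivative'' remainders $-\tfrac12\intO \alpha_t p_t^2\dx$ and $-\tfrac12\intO r_t|\nabla p|^2\dx$; the strong-damping term $-b\Delta p_t\cdot p_t$ gives the good dissipation $b\|\nabla p_t\|_{L^2}^2$, and the right-hand side contributes $\intO g\, p_t\dx$. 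For $E_2[p]$ I would differentiate \eqref{linear_West} in $t$ to get $\alpha p_{ttt} + \alpha_t p_{tt} - r\Delta p_t - r_t\Delta p - b\Delta p_{tt} = g_t$ and test with $p_{tt}$. This produces $\ddt\big(\tfrac12\|\sqrt\alpha p_{tt}\|_{L^2}^2 + \tfrac12\|\sqrt r\nabla p_t\|_{L^2}^2\big)$ up to coefficient-derivative remainders, plus $b\|\nabla p_{tt}\|_{L^2}^2$ from the damping, plus the term $-\intO r_t\Delta p\, p_{tt}\dx$. To recover the remaining pieces of $E_2[p]$ and $\mathfrak D_0$, namely $\|\sqrt b\Delta p\|_{L^2}^2$, $\|\sqrt r\Delta p\|_{L^2}^2$ and $b\|\Delta p_t\|_{L^2}^2$, I would additionally test \eqref{linear_West} with $-\Delta p_t$: the term $b\Delta p_t\cdot\Delta p_t$ yields $b\|\Delta p_t\|_{L^2}^2$, the term $-r\Delta p\cdot(-\Delta p_t) = r\Delta p\,\Delta p_t$ gives $\tfrac12\ddt\|\sqrt r\Delta p\|_{L^2}^2$ up to an $r_t$-remainder, and $\alpha p_{tt}\cdot(-\Delta p_t) = \alpha\nabla p_{tt}\cdot\nabla p_t + \nabla\alpha\cdot\nabla p_t\, p_{tt}$ after integration by parts. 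Summing the three identities with suitable fixed weights gives the claimed left-hand side.

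The bulk of the work is then estimating the error terms on the right by $(1+\Lambda_1(t))(E_1[p]+E_2[p])$ plus $\|g\|_{L^2}^2 + \|g_t\|_{L^2}^2$. I would handle them as follows. The terms $\intO \alpha_t p_t^2\dx$ and $\intO \alpha_t p_{tt}^2\dx$: split by Hölder into $\|\alpha_t\|_{L^2}\|p_t\|_{L^4}^2$ and use Ladyzhenskaya \eqref{lady} $\|p_t\|_{L^4}\le C\|p_t\|_{L^2}^{1-d/4}\|p_t\|_{H^1}^{d/4}$ together with Young's $\varepsilon$-inequality to split off $\varepsilon b\|\nabla p_t\|_{L^2}^2$ (absorbed into the dissipation) at the cost of $\|\alpha_t\|_{L^2}^{4/(4-d)}\|p_t\|_{L^2}^2 \lesssim \|\alpha_t\|_{L^2}^{4/(4-d)} E_1[p]$; same for $p_{tt}$ against $E_2[p]$. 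The terms $\intO r_t|\nabla p|^2\dx$ and $\intO r_t|\nabla p_t|^2\dx$: use $\|r_t\|_{L^3}\|\nabla p\|_{L^2}\|\nabla p\|_{L^6}$ with $\|\nabla p\|_{L^6}\lesssim\|\nabla p\|_{H^1}\lesssim\|\Delta p\|_{L^2}$ (elliptic regularity), then Young to produce $\varepsilon\|\Delta p\|_{L^2}^2$ and $\|r_t\|_{L^3}^2 E_1[p]$; for $\nabla p_t$ use instead Ladyzhenskaya as above to generate $\|r_t\|_{L^2}^{4/(4-d)}$. The mixed term $\intO r_t\Delta p\, p_{tt}\dx$: bound by $\|r_t\|_{L^3}\|\Delta p\|_{L^2}\|p_{tt}\|_{L^6}$, use $\|p_{tt}\|_{L^6}\lesssim\|\nabla p_{tt}\|_{L^2}$ (Poincaré/Sobolev, since $p_{tt}|_{\partial\Om}=0$) and Young against the damping $b\|\nabla p_{tt}\|_{L^2}^2$. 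The terms coming from $\nabla\alpha$ and $\nabla r$: e.g. $\intO\nabla\alpha\cdot\nabla p_t\, p_{tt}\dx \le \|\nabla\alpha\|_{L^3}\|\nabla p_t\|_{L^2}\|p_{tt}\|_{L^6}$, Young against $b\|\nabla p_{tt}\|_{L^2}^2$ at the cost of $\|\nabla\alpha\|_{L^3}^2 E_2[p]$; and the $r\nabla p\cdot\nabla(\ldots)$ type correction $\intO\nabla r\cdot\nabla p\,\Delta p_t\dx$ (if it appears) bounded via $\|\nabla r\|_{L^3}\|\nabla p\|_{L^6}\|\Delta p_t\|_{L^2}\lesssim\|\nabla r\|_{L^3}^2\|\Delta p\|_{L^2}^2 + \varepsilon b\|\Delta p_t\|_{L^2}^2$, noting $\|\Delta p\|_{L^2}^2\lesssim E_2[p]$ via $r_0\le r$. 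Finally $\intO g\, p_t\dx\le \|g\|_{L^2}^2 + E_1[p]$ and $\intO g_t\, p_{tt}\dx\le\|g_t\|_{L^2}^2 + E_2[p]$. Collecting all exponents $4/(4-d)$, the $L^2$- and $L^3$-norms of $r_t$, $\alpha_t$, $\nabla r$, $\nabla\alpha$, one reads off precisely the stated $\Lambda_1(t)$, while all $\varepsilon$-pieces are absorbed into the left-hand dissipation terms $b\|\nabla p_t\|_{L^2}^2 + b\|\nabla p_{tt}\|_{L^2}^2 + \|\sqrt r\Delta p\|_{L^2}^2 + b\|\Delta p_t\|_{L^2}^2$ by choosing $\varepsilon$ small and the relative weights of the three multiplier identities appropriately.

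The main obstacle I anticipate is bookkeeping the relative weights: the term $\|\sqrt r\Delta p\|_{L^2}^2$ sits on the left both as part of the dissipation (via the $-\Delta p_t$ test) \emph{and} is needed as an upper bound to absorb the $\varepsilon\|\Delta p\|_{L^2}^2$ leftovers from the $r_t$- and $\nabla r$-terms, so one must check that after choosing $\varepsilon$ small enough a strictly positive multiple of $\|\sqrt r\Delta p\|_{L^2}^2$ survives on the left — this forces the test with $-\Delta p_t$ to enter with a sufficiently large fixed coefficient relative to the $E_2[p]$ identity. A secondary subtlety is that the low-dimension exponent $4/(4-d)$ must be produced uniformly for $d\in\{1,2,3\}$; this is exactly what the interpolation $L^4$-Ladyzhenskaya inequality plus Young with the conjugate pair $\big(\tfrac{4-d}{4-d}\cdot\text{something}\big)$ delivers, but one should double-check the Young exponents are admissible (they are, since $d<4$). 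Once the weights are fixed, the inequality \eqref{estimate_E_1_2_p} follows by summing and renaming constants.
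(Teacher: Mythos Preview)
Your overall multiplier strategy is right, but there is a genuine gap in how you recover the Laplacian pieces. You use three tests ($p_t$; $p_{tt}$ on the time-differentiated equation; $-\Delta p_t$). From the $-\Delta p_t$ test you correctly get $b\|\Delta p_t\|_{L^2}^2$ on the left and, after writing $r\Delta p\,\Delta p_t=\tfrac12\ddt\|\sqrt r\,\Delta p\|_{L^2}^2-\tfrac12\intO r_t(\Delta p)^2\dx$, an energy piece equivalent to $\tfrac12\|\sqrt b\,\Delta p\|_{L^2}^2$. But that is all it gives: there is \emph{no} $\|\sqrt r\,\Delta p\|_{L^2}^2$ dissipation coming out of this identity, contrary to what you state in the last paragraph. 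Since the lemma has $\|\sqrt{r(t)}\Delta p(t)\|_{L^2}^2$ explicitly on the left-hand side, your three identities cannot produce it. Moreover, the $r_t(\Delta p)^2$ remainder you introduce this way is problematic at the $E_1+E_2$ level: to estimate $\intO r_t(\Delta p)^2\dx$ by H\"older/Ladyzhenskaya you would need $\|\Delta p\|_{L^4}$ or $\|\Delta p\|_{L^6}$, which requires $\|\nabla\Delta p\|_{L^2}$ --- an $E_3$ quantity not available here.

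The paper resolves both issues by inserting a \emph{fourth} test, namely \eqref{linear_West} against $-\Delta p$: this yields $\tfrac{b}{2}\ddt\|\Delta p\|_{L^2}^2+\|\sqrt r\,\Delta p\|_{L^2}^2=-\intO g\,\Delta p\dx+\intO\alpha p_{tt}\,\Delta p\dx$, giving the $\sqrt b\,\Delta p$ energy piece and the $\sqrt r\,\Delta p$ dissipation cleanly, with only harmless errors. The $-\Delta p_t$ test is then used \emph{algebraically} (no time derivative): one bounds $\intO r\Delta p\,\Delta p_t\dx\le C(\varepsilon)\|r\|_{L^\infty}^2\|\sqrt b\,\Delta p\|_{L^2}^2+\varepsilon\|\Delta p_t\|_{L^2}^2$ directly, obtaining $\tfrac{b}{2}\|\Delta p_t\|_{L^2}^2\lesssim E_2[p]+\|g\|_{L^2}^2$. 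This sidesteps the $r_t(\Delta p)^2$ term entirely. A secondary point: your integration by parts of $\alpha p_{tt}(-\Delta p_t)$ creates $\intO p_{tt}\,\nabla\alpha\cdot\nabla p_t\dx$, whose natural estimate produces a $\|\nabla\alpha\|_{L^3}^2$ factor that is \emph{not} in $\Lambda_1$ (it belongs to $\Lambda_2$ in the $E_3$ lemma). The paper avoids this by not integrating by parts there, bounding $\intO\alpha p_{tt}\,\Delta p_t\dx$ directly via $\|\sqrt\alpha\,p_{tt}\|_{L^2}\|\Delta p_t\|_{L^2}$.
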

\begin{proof}
The proof is based on ideas that are drawn from  \cite{Nikolic_2022}. 
Multiplying the equation \eqref{linear_West} by $p_t$, integrating over $\Om$ and using integration by parts, we find
\begin{equation} \label{identity_1}
\begin{aligned}
 &\ddt E_1[p](t)+b\intO \vert \nabla p_t \vert^2 \dx\\
= &\, \intO  g  p_t  \dx+\frac{1}{2}\intO   \alpha_t p_t^2 \dx-\intO \nabla r \cdot \nabla p p_t \dx+\frac{1}{2}\intO r_t \vert \nabla p \vert^2 \dx.\\ 
\end{aligned}   
\end{equation}
Our goal now is to estimate the terms on the right-hand side of \eqref{identity_1}. Applying  Young and Poincar\'{e} inequalities, we  obtain 
\begin{equation}
\intO g  p_t  \dx \leq C(\varepsilon) \Vert g \Vert^2_{L^2} +\varepsilon \Vert \nabla p_t \Vert^2_{L^2}.  
\end{equation}
The next two terms on the right-hand side of \eqref{identity_1} can be estimated  using H\"{o}lder, Young and Poincar\'{e}'s inequalities as well as the embedding $H^1(\Om) \hookrightarrow L^4(\Om)$. Hence, we have  
\begin{equation}
\begin{aligned}
\intO \alpha_t p_t^2 \dx &\leq \Vert \alpha_t \Vert_{L^2} \Vert p_t \Vert_{L^4}^2\\
& \leq  C(\varepsilon) \Vert \alpha_t \Vert_{L^2}^2   \Vert \sqrt{r}\nabla p_t \Vert_{L^2}^2+ \varepsilon \Big\Vert \frac{1}{\sqrt{r}} \Big\Vert_{L^\infty}^2 \Vert \nabla p_t \Vert_{L^2}^2.
\end{aligned}
\end{equation}
Considering $\varepsilon$ suitably small and using Assumption \ref{Assumption_1}, we can absorb the $\varepsilon$--terms on the above estimates    by  the dissipation terms  on the left-hand side of $\eqref{identity_1}$.\\
Likewise, we obtain
\begin{equation}
\begin{aligned}
\intO \nabla r \cdot \nabla p p_t \dx &\leq  \Vert \nabla r \Vert_{L^2} \Vert \nabla p \Vert_{L^4} \Vert p_t \Vert_{L^4}\\
&\leq C(\varepsilon) \Vert \nabla r \Vert^2_{L^2} \Vert \sqrt{r}\nabla p_t \Vert^2_{L^2}  + \varepsilon \Big\Vert \frac{1}{r} \Big\Vert_{L^\infty}^2  \Vert \sqrt{r}\Delta p \Vert^2_{L^2} \\
\end{aligned}
\end{equation}
where we have also taken into account the elliptic estimate 
\begin{equation}\label{elliptic}
\Vert \nabla p \Vert_{H^1} \leq \Vert p \Vert_{H^2} \leq C \Vert \Delta p \Vert_{L^2}.
\end{equation}
To estimate  the last term on the right-hand side of \eqref{identity_1}, we employ the Ladyzhenskaya inequality \eqref{lady} together with \eqref{elliptic} to find 
\begin{equation}
\begin{aligned}
    \intO r_t \vert \nabla p \vert^2 \dx &\leq \Vert r_t \Vert_{L^2} \Vert \nabla p \Vert^2_{L^4}\\
    & \lesssim \Vert r_t \Vert_{L^2} \Big\Vert \frac{1}{r} \Big\Vert_{L^\infty}\Vert \sqrt{r}\nabla p \Vert_{L^2}^{2(1-\frac{d}{4})}  \Vert \sqrt{r} \Delta p \Vert_{L^2}^{\frac{d}{2}}\\
    &\leq C(\varepsilon)\Vert r_t \Vert_{L^2}^{\frac{4}{4-d}}\Vert \sqrt{r} \nabla p \Vert_{L^2}^2+\varepsilon \Big\Vert \frac{1}{r}  \Big\Vert_{L^\infty}^{\frac{4}{d}} \Vert \sqrt{r} \Delta p \Vert^2_{L^2}. \\
    \end{aligned}
\end{equation}
Altogether, the estimates above yield for all $t \geq 0$
\begin{equation}\label{estimate_E_p_1}
\begin{aligned}
    \ddt E_1[p](t)+b \Vert \nabla p_t(t) \Vert^2_{L^2}
 \lesssim &\,\Vert r_t(t) \Vert_{L^2}^{\frac{4}{4-d}}  E_1[p](t)\\
 &+ (\Vert \alpha_t(t) \Vert_{L^2}^2
 +\Vert \nabla r(t) \Vert^2_{L^2})E_2[p](t)\\
 &+\Vert g(t) \Vert_{L^2}^2
 +\varepsilon \Vert \sqrt{r(t)}\Delta p(t) \Vert^2_{L^2}.\\
    \end{aligned}
\end{equation}

In order to get an estimate for the energy $E_2[p]$, we  differentiate \eqref{linear_West} with respect to $t$ to  obtain
 \begin{equation} \label{linear_West_t}
    \alpha p_{ttt}-r \Delta p_t-b \Delta p_{tt}=-\alpha_t p_{tt} +r_t \Delta p +g_t.
\end{equation}
Next,  we multiply  \eqref{linear_West_t} by $p_{tt}$ and integrate over $\Om$. Using integration by parts, we find  
\begin{equation} \label{identity_2}
\begin{aligned}
\frac{1}{2} &\, \ddt \Big( \Vert \sqrt{\alpha}p_{t t} \Vert^2_{L^2}+ \Vert \sqrt{r} \nabla p_t \Vert^2_{L^2}\Big)+b \Vert \nabla p_{tt} \Vert^2_{L^2}\\
=-\frac{1}{2}&\intO \alpha_t p_{t t}^2 \dx+\frac{1}{2}\intO r_t \vert \nabla p_t \vert^2 \dx-\intO \nabla r \cdot \nabla p_t p_{tt} \dx\\
&+\intO r_t \Delta p p_{tt} \dx +\intO g_t p_{tt} \dx.\\
\end{aligned}
\end{equation}
First, Ladyzhenskaya and Young inequalities along with the embedding $H^1(\Om) \hookrightarrow L^4(\Om)$ allow us to get the following upper bound for the first term on the right-hand side of \eqref{identity_2}. That is,   
\begin{equation}
\begin{aligned}
-\frac{1}{2}\intO \alpha_t p_{tt}^2 \dx &\leq  \frac{1}{2}\Vert \alpha_t \Vert_{L^2} \Vert p_{tt} \Vert_{L^4}^2\\
&\lesssim \Big\Vert \frac{1}{\alpha} \Big\Vert_{L^\infty}^{1-\frac{d}{4}}\Vert \alpha_t \Vert_{L^2} \Vert \sqrt{\alpha}p_{tt} \Vert_{L^2}^{2(1-\frac{d}{4})}\Vert \nabla p_{tt} \Vert_{L^2}^{\frac{d}{2}}\\
& \leq   C(\varepsilon) \Vert \alpha_t \Vert_{L^2}^{\frac{4}{4-d}}  \Vert \sqrt{\alpha} p_{tt} \Vert_{L^2}^2+ \varepsilon \Big\Vert \frac{1}{\alpha} \Big\Vert_{L^\infty}^{\frac{4}{d}-1} \Vert \nabla p_{tt} \Vert_{L^2}^2.
\end{aligned}
\end{equation}
We can take $\varepsilon$ as small as needed in order to absorb the last term in the estimate above into the dissipation term on the left-hand side of \eqref{identity_2}.\\
Similarly, we can derive the following estimate for the second term on the right-hand side of \eqref{identity_2}
\begin{equation}
\begin{aligned}
\intO r_t \vert \nabla p_t \vert^2 \dx &\leq \Vert r_t \Vert_{L^2} \Vert \nabla p_t\Vert^2_{L^4}\\
&\lesssim \Big\Vert \frac{1}{r} \Big\Vert_{L^\infty}^{1-\frac{d}{4}}\Vert r_t \Vert_{L^2} \Vert \sqrt{r}\nabla p_t\Vert_{L^2}^{2(1-\frac{d}{4})}\Vert \Delta p_t\Vert_{L^2}^{\frac{d}{2}}\\
&\leq C(\varepsilon) \Vert r_t \Vert_{L^2}^{\frac{4}{4-d}} \Vert \sqrt{r}\nabla p_t\Vert^2_{L^2}+ \varepsilon  \Big\Vert \frac{1}{r} \Big\Vert_{L^\infty}^{\frac{4}{d}-1} \Vert \Delta p_t\Vert^2_{L^2}
\end{aligned}
\end{equation}
where we have also used elliptic regularity as in \eqref{elliptic}.  Moreover, making use of the embedding $H^1(\Om) \hookrightarrow L^6(\Om)$ and Poincar\'{e}'s inequality, it follows
\begin{equation}\label{nabla_r_Estimate}
\begin{aligned}
    \intO \nabla r \cdot \nabla p_t p_{tt} \dx &\leq  \Vert \nabla r \Vert_{L^3} \Vert \nabla p_t \Vert_{L^2} \Vert p_{tt} \Vert_{L^6}\\
    & \leq C(\varepsilon) \Vert \nabla r \Vert^2_{L^3} \Vert \sqrt{r} \nabla  p_t \Vert_{L^2}^2+\varepsilon \Big\Vert \frac{1}{\sqrt{r}} \Big\Vert_{L^\infty}^2 \Vert \nabla p_{tt} \Vert^2_{L^2}.
\end{aligned}    
\end{equation}
Again, we call on the same tools to estimate the two remaining terms on the right of \eqref{identity_2}. So we can show that 
\begin{equation}
\begin{aligned}
\intO r_t \Delta p p_{tt} \dx &\leq \Big\Vert \frac{r_t}{b} \Big\Vert_{L^3} \Vert p_{tt} \Vert_{L^6}  \Vert \sqrt{b}\Delta p \Vert_{L^2}\\
&\leq C(\varepsilon) \Vert r_t \Vert_{L^3}^2 \Vert \sqrt{b} \Delta p \Vert_{L^2}^2+\varepsilon  \Vert \nabla p_{tt} \Vert_{L^2}^2.\\ 
\end{aligned}
\end{equation} 
In addition, we have 
\begin{equation}
\intO g_t p_{tt} \dx \leq C(\varepsilon) \Vert g_t \Vert^2_{L^2}+\varepsilon \Vert \nabla p_{tt} \Vert_{L^2}^2. 
\end{equation} 
Collecting the above estimates and selecting  $\varepsilon$ small enough we obtain for   all $t\geq 0$
\begin{equation} \label{estimate_E_p_2}
\begin{aligned}
&\frac{1}{2} \ddt \Big(\Vert  \sqrt{\alpha(t)} p_{t t}(t) \Vert^2_{L^2}+  \Vert \sqrt{r(t)} \nabla p_t(t) \Vert^2_{L^2}\Big)+ \frac{b}{2}\Vert \nabla p_{tt}(t) \Vert^2_{L^2}\\
\lesssim &\, \Big(\Vert  r_t(t) \Vert_{L^3}^2+\Vert r_t(t) \Vert_{L^2}^{\frac{4}{4-d}}+\Vert \alpha_t(t) \Vert_{L^2}^{\frac{4}{4-d}}+\Vert \nabla r(t) \Vert^2_{L^3}\Big) E_2[p](t)\\ 
&+\varepsilon  \Big\Vert \frac{1}{r} \Big\Vert_{L^\infty}^{\frac{4}{d}-1} \Vert \Delta p_t(t)\Vert^2_{L^2}
+\Vert g_t(t) \Vert^2_{L^2}.\\
\end{aligned}
\end{equation}

Now, we focus on establishing estimates for $\Vert \Delta p \Vert_{L^2}$ and $\Vert \Delta p_t \Vert_{L^2}$. Testing the equation \eqref{linear_West} by $-\Delta p$ and integrating over $\Om$ gives
\begin{equation}
 \frac{b}{2} \ddt \Vert \Delta p \Vert^2_{L^2}+\intO r \vert \Delta p \vert^2 \dx=-\intO g (\Delta p)  \dx+ \intO \alpha p_{tt} \Delta p \dx. 
\end{equation}
Applying H\"{o}lder, Young and Poincar\'{e} inequalities, we get the following bound
\begin{equation}\label{eq35}
\begin{aligned}
&\frac{b}{2} \ddt \Vert \Delta p \Vert^2_{L^2}+\Vert \sqrt{r} \Delta p \Vert_{L^2}^2 \\
\leq &\, 2 \Vert g \Vert^2_{L^2} +  \frac{2}{b} \Vert \sqrt{b} \Delta p \Vert_{L^2}^2+C (\varepsilon) \Vert \sqrt{b} \Delta p \Vert_{L^2}^2+ \varepsilon \Vert \alpha  \Vert_{L^\infty}^2 \Vert  \nabla p_{tt} \Vert_{L^2}^2.
\end{aligned}
\end{equation}
Adding up \eqref{eq35} and \eqref{estimate_E_p_2}, now the last term on the right of \eqref{eq35} can be absorbed by the left-hand side of \eqref{estimate_E_p_2}. Hence, we get 
\begin{equation} \label{estimate_E_p_2*}
\begin{aligned}
& \ddt E_2[p](t)+\frac{b}{3} \Vert \nabla p_{tt}(t) \Vert^2_{L^2}+\Vert \sqrt{r(t)} \Delta p(t) \Vert_{L^2}^2\\
\lesssim &\, (1+\Vert  r_t(t) \Vert_{L^3}^2  +\Vert r_t(t) \Vert_{L^2}^{\frac{4}{4-d}}+\Vert \alpha_t(t) \Vert_{L^2}^{\frac{4}{4-d}}+\Vert \nabla r(t) \Vert^2_{L^3}) E_2[p](t)\\
& +\varepsilon   \Big\Vert \frac{1}{r} \Big\Vert_{L^\infty}^{\frac{4}{d}-1} \Vert \Delta p_t(t)\Vert^2_{L^2}
+\Vert g(t) \Vert^2_{L^2}+\Vert g_t(t) \Vert^2_{L^2}.\\
\end{aligned}
\end{equation} 
Next, we multiply the equation \eqref{linear_West} by $-\Delta p_t$ and we integrate over $\Om$ to get 
\begin{equation}
b \Vert \Delta p_t \Vert^2_{L^2}=\intO \alpha p_{tt} \Delta p_t \dx-\intO r \Delta p \Delta p_t \dx +\intO g (-\Delta p_t)  \dx. 
\end{equation}
Using H\"{o}lder and Young inequalities and recalling  Assumption \ref{Assumption_1}, we obtain
\begin{equation}
\begin{aligned}
b \Vert \Delta p_t \Vert^2_{L^2}  \leq &\, C(\varepsilon) \Big(\Vert \sqrt{\alpha} p_{tt} \Vert^2_{L^2}+ \Vert r \Vert_{L^\infty}^2 \Vert \sqrt{b} \Delta p \Vert^2_{L^2} + \Vert g \Vert^2_{L^2}\Big)+ \varepsilon \Vert \Delta p_t \Vert^2_{L^2}.
\end{aligned}
\end{equation}
Taking  $\varepsilon$ small enough, we find 
\begin{equation} \label{Delta_p_t}
\frac{b}{2} \Vert \Delta p_t(t) \Vert^2_{L^2} \lesssim  E_2[p](t)+ \Vert g (t) \Vert^2_{L^2}, \quad t \geq 0.
\end{equation} 
Thus, putting \eqref{estimate_E_p_2*} and \eqref{Delta_p_t} together,  we have
\begin{equation} \label{estimate_E_p_2**}
\begin{aligned}
&\ddt E_2[p](t)+\frac{b}{3} \Vert \nabla p_{tt}(t) \Vert^2_{L^2}+\Vert \sqrt{r(t)} \Delta p(t) \Vert_{L^2}^2+\frac{b}{3} \Vert \Delta p_t(t) \Vert^2_{L^2}\\
\lesssim &\, \Big(1+\Vert  r_t(t) \Vert_{L^3}^2  +\Vert r_t(t) \Vert_{L^2}^{\frac{4}{4-d}}+\Vert \alpha_t(t) \Vert_{L^2}^{\frac{4}{4-d}}+\Vert \nabla r(t) \Vert_{L^3}^2 \Big) E_2[p](t)\\
&+\Vert g(t) \Vert^2_{L^2}+\Vert g_t(t) \Vert^2_{L^2}.\\
\end{aligned}
\end{equation} 
Finally, summing up the estimates \eqref{estimate_E_p_1},  \eqref{estimate_E_p_2**} and selecting  $\varepsilon$ small enough in the last term on the right-hand side  of \eqref{estimate_E_p_1},  we obtain  \eqref{estimate_E_1_2_p}. This completes the proof of Lemma \ref{lemma2}.
\end{proof}
\begin{lemma}
Let $g \in H^1_0(\Om)$ and $g_t \in L^2(\Om)$. Then, under Assumption \ref{Assumption_1},   the energy $E_3$ satisfies for $t \geq 0$ the estimate
\begin{equation} \label{estimate_E_3_p}
\begin{aligned}
&\ddt E_3[p](t)+\Vert \sqrt{r} \nabla \Delta p(t) \Vert^2_{L^2} +\Vert \sqrt{\alpha} p_{ttt}(t) \Vert^2_{L^2}\\
\lesssim &\, (1+\Lambda_2(t))E_3[p](t)+ \Vert \sqrt{b} \Delta p_t(t) \Vert_{L^2}^2+\varepsilon  \Vert \nabla p_{tt}(t) \Vert_{L^2}^2\\
&+\Vert \nabla g(t)\Vert^2_{L^2}+\Vert g_t(t) \Vert_{L^2}^2\\
\end{aligned}
\end{equation}    
where
\begin{equation}
\Lambda_2(t)=\Vert r_t(t) \Vert_{L^3}^2+\Vert \alpha_t(t) \Vert_{L^3}^2+\Vert \nabla \alpha(t) \Vert_{L^3}^2+\Vert \nabla r(t) \Vert_{L^3}^2.
\end{equation}  
\end{lemma}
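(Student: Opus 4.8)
The plan is to produce the two pieces of $E_3[p]=\tfrac b2\big(\Vert\nabla p_{tt}\Vert_{L^2}^2+\Vert\nabla\Delta p\Vert_{L^2}^2\big)$ from two separate energy identities, obtained by testing the pressure equation, and its time derivative, against suitable high--order multipliers. I work throughout at the Faedo--Galerkin level with the Dirichlet--Laplacian eigenfunctions $\{w_j\}$ already used in the proof of Proposition \ref{prop2}. The key structural fact is that if $p_n=\sum_{j\le n}c_j(t)w_j$ then $\Delta p_n=-\sum_{j\le n}\lambda_j c_j w_j$ again lies in $\mathrm{span}\{w_j\}_{j\le n}\subset H^1_0(\Om)$; hence $\Delta^2 p_n$ and $p_{ttt}^{(n)}$ are admissible test functions, all integrations by parts below are boundary--term free, and Poincar\'e together with $H^1(\Om)\hookrightarrow L^6(\Om)$ gives $\Vert\Delta p\Vert_{L^6}\lesssim\Vert\nabla\Delta p\Vert_{L^2}$ and $\Vert p_{tt}\Vert_{L^6}\lesssim\Vert\nabla p_{tt}\Vert_{L^2}$. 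As in the earlier proofs, the estimate passes to the limit $p$ by weak lower semicontinuity of norms.

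\emph{First identity.} Differentiating \eqref{linear_West} in time gives \eqref{linear_West_t}; testing that equation with $p_{ttt}$, the term $-b\Delta p_{tt}$ contributes $\tfrac b2\ddt\Vert\nabla p_{tt}\Vert_{L^2}^2$ and $\alpha p_{ttt}$ contributes $\Vert\sqrt\alpha p_{ttt}\Vert_{L^2}^2$, so that
\[
\Vert\sqrt\alpha p_{ttt}\Vert_{L^2}^2+\tfrac b2\ddt\Vert\nabla p_{tt}\Vert_{L^2}^2=\intO r\Delta p_t\,p_{ttt}\dx-\intO\alpha_t p_{tt}\,p_{ttt}\dx+\intO r_t\Delta p\,p_{ttt}\dx+\intO g_t\,p_{ttt}\dx .
\]
Using H\"older and Young's $\varepsilon$--inequality and absorbing a small multiple of $\Vert p_{ttt}\Vert_{L^2}^2$ into $\Vert\sqrt\alpha p_{ttt}\Vert_{L^2}^2$ via $\alpha\ge\alpha_0$: the first term is $\lesssim\varepsilon\Vert p_{ttt}\Vert_{L^2}^2+\Vert\sqrt b\Delta p_t\Vert_{L^2}^2$; the second, via $\Vert\alpha_t\Vert_{L^3}\Vert p_{tt}\Vert_{L^6}\Vert p_{ttt}\Vert_{L^2}$, is $\lesssim\varepsilon\Vert p_{ttt}\Vert_{L^2}^2+\Vert\alpha_t\Vert_{L^3}^2E_3[p]$; the third, via $\Vert r_t\Vert_{L^3}\Vert\Delta p\Vert_{L^6}\Vert p_{ttt}\Vert_{L^2}$, is $\lesssim\varepsilon\Vert p_{ttt}\Vert_{L^2}^2+\Vert r_t\Vert_{L^3}^2E_3[p]$; the last is $\lesssim\varepsilon\Vert p_{ttt}\Vert_{L^2}^2+\Vert g_t\Vert_{L^2}^2$. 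This accounts for the summands $\Vert r_t\Vert_{L^3}^2,\Vert\alpha_t\Vert_{L^3}^2$ of $\Lambda_2$ and for the terms $\Vert\sqrt b\Delta p_t\Vert_{L^2}^2$, $\Vert g_t\Vert_{L^2}^2$.

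\emph{Second identity.} Testing \eqref{linear_West} with $\Delta^2 p$ and integrating by parts twice, the term $-b\Delta p_t$ yields $\tfrac b2\ddt\Vert\nabla\Delta p\Vert_{L^2}^2$ and $-r\Delta p$ yields $\Vert\sqrt r\nabla\Delta p\Vert_{L^2}^2$, giving
\[
\tfrac b2\ddt\Vert\nabla\Delta p\Vert_{L^2}^2+\Vert\sqrt r\nabla\Delta p\Vert_{L^2}^2=\intO\alpha\nabla p_{tt}\cdot\nabla\Delta p\dx+\intO p_{tt}\,\nabla\alpha\cdot\nabla\Delta p\dx-\intO\Delta p\,\nabla r\cdot\nabla\Delta p\dx-\intO\nabla g\cdot\nabla\Delta p\dx .
\]
Each right--hand term is treated by H\"older and Young, absorbing a small fixed fraction of $\Vert\nabla\Delta p\Vert_{L^2}^2$ into the left--hand dissipation: in the critical cross term $\intO\alpha\nabla p_{tt}\cdot\nabla\Delta p$ the $\nabla\Delta p$--part is absorbed on the left and the remaining multiple of $\Vert\nabla p_{tt}\Vert_{L^2}^2$ is kept on the right — this is the $\varepsilon\Vert\nabla p_{tt}\Vert_{L^2}^2$ of the statement, to be absorbed later by the $b\Vert\nabla p_{tt}\Vert_{L^2}^2$ dissipation of Lemma \ref{lemma2}; $\intO p_{tt}\,\nabla\alpha\cdot\nabla\Delta p\lesssim\Vert\nabla\alpha\Vert_{L^3}^2E_3[p]$ plus an absorbable piece; $\intO\Delta p\,\nabla r\cdot\nabla\Delta p\lesssim\Vert\nabla r\Vert_{L^3}\Vert\Delta p\Vert_{L^6}\Vert\nabla\Delta p\Vert_{L^2}\lesssim\Vert\nabla r\Vert_{L^3}^2E_3[p]$ plus an absorbable piece; and the $g$--term is $\lesssim\Vert\nabla g\Vert_{L^2}^2$ plus an absorbable piece. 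These supply the remaining summands $\Vert\nabla\alpha\Vert_{L^3}^2,\Vert\nabla r\Vert_{L^3}^2$ of $\Lambda_2$ and the term $\Vert\nabla g\Vert_{L^2}^2$.

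Adding the two identities and using $\Vert p_{tt}\Vert_{L^6}^2,\Vert\Delta p\Vert_{L^6}^2\lesssim E_3[p]$ yields \eqref{estimate_E_3_p}. I expect the main obstacle to be the handling of the top--order interaction terms $\intO\alpha\nabla p_{tt}\cdot\nabla\Delta p$ and $\intO r_t\Delta p\,p_{ttt}$: the Young weights must be chosen so that the two genuine dissipations $\Vert\sqrt r\nabla\Delta p\Vert_{L^2}^2$ and $\Vert\sqrt\alpha p_{ttt}\Vert_{L^2}^2$ survive on the left, while every reproduced copy of a top--order norm is attached either to the time--integrable coefficient $\Lambda_2$ or to the retainable factor in front of $\Vert\nabla p_{tt}\Vert_{L^2}^2$; the seemingly circular step of controlling $\Vert\Delta p\Vert_{L^6}$ by $\Vert\nabla\Delta p\Vert_{L^2}$ is legitimate at the Galerkin level and is harmless because that factor always appears paired with $p_{ttt}$ (absorbed by the dissipation) or with $\nabla\Delta p$ (controlled by $E_3$).
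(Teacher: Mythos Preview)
Your proposal is correct and follows the paper's proof essentially verbatim: test \eqref{linear_West_t} with $p_{ttt}$ and test \eqref{linear_West} with $\Delta^2 p$, then estimate each product via H\"older, Young, and the embedding $H^1_0(\Om)\hookrightarrow L^6(\Om)$ (using $\Delta p\in H^1_0$ at the Galerkin level). One small imprecision: in the cross term $\intO\alpha\nabla p_{tt}\cdot\nabla\Delta p\dx$ you cannot both absorb a \emph{small} multiple of $\Vert\nabla\Delta p\Vert_{L^2}^2$ into the dissipation \emph{and} retain only $\varepsilon\Vert\nabla p_{tt}\Vert_{L^2}^2$ with $\varepsilon$ arbitrarily small --- Young's inequality forces a large constant on one factor; the paper instead splits as $C(\varepsilon)\Vert\sqrt{b}\nabla\Delta p\Vert_{L^2}^2+\varepsilon\Vert\nabla p_{tt}\Vert_{L^2}^2$ and places the first piece into $E_3[p]$ on the right (this is the source of the ``$1$'' in $(1+\Lambda_2)E_3[p]$).
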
 
\begin{proof}
We test the linearized Westervelt equation \eqref{linear_West} by $\Delta^2 p$ and we integrate over $\Om$. Using integration by parts, we obtain   
\begin{equation} \label{eq18}
\begin{aligned}
&\intO (r \vert\nabla \Delta p \vert^2+b \nabla \Delta p_t \cdot \nabla \Delta p )\dx\\
=&\,\intO (-\alpha \nabla p_{tt} -p_{tt} \nabla \alpha -\Delta p \nabla r-\nabla g) \cdot \nabla \Delta p \dx.
\end{aligned}
\end{equation}
Using H\"{o}lder's inequality, we find  
\begin{equation}
\begin{aligned}
&\frac{1}{2}\ddt  \Vert \sqrt{b}\nabla \Delta p \Vert_{L^2}^2+\Vert \sqrt{r} \nabla \Delta p \Vert^2_{L^2} \\
\leq \Vert \alpha & \Vert_{L^\infty} \Vert \nabla p_{tt} \Vert_{L^2} \Vert \nabla \Delta p \Vert_{L^2}+\Vert \nabla \alpha \Vert_{L^3} \Vert p_{tt} \Vert_{L^6} \Vert \nabla \Delta p \Vert_{L^2}\\
&+\Vert \nabla r \Vert_{L^3} \Vert \Delta p \Vert_{L^6} \Vert \nabla \Delta p \Vert_{L^2}+ \Vert \nabla g \Vert_{L^2} \Vert \nabla \Delta p \Vert_{L^2}.\\
\end{aligned}
\end{equation}
Moreover, taking advantage of Assumption \ref{Assumption_1}, the embedding $H^1(\Om) \hookrightarrow L^6(\Om)$ and applying Young's inequality, we find 
\begin{equation}
\begin{aligned}
&\frac{1}{2}\ddt  \Vert \sqrt{b}\nabla \Delta p \Vert_{L^2}^2+\Vert \sqrt{r} \nabla \Delta p \Vert^2_{L^2} \\
 \leq&\,  C(\varepsilon)\Big(1+\Vert \nabla \alpha \Vert_{L^3}^2 \Big) \Vert \sqrt{b} \nabla \Delta p \Vert^2_{L^2}+ \varepsilon \Vert \nabla p_{tt} \Vert_{L^2}^2\\
 &+C(\varepsilon)\Big(\Vert \nabla r \Vert_{L^3}^2 \Vert \sqrt{b} \nabla \Delta p \Vert^2_{L^2} +\Vert \nabla g \Vert^2_{L^2}\Big)+\varepsilon \Big\Vert \frac{1}{\sqrt{r}} \Big\Vert_{L^\infty}^2 \Vert \sqrt{r}\nabla \Delta p \Vert_{L^2}^2.\\
\end{aligned}
\end{equation}
We can fix $\varepsilon$ as small as needed  to absorb the last term on the right-hand side  into the dissipative term on the left-hand side. Hence, we have    
\begin{equation} \label{estimate_nabla_Delta_p}
\begin{aligned}
&\frac{1}{2}\ddt  \Vert \sqrt{b}\nabla \Delta p \Vert_{L^2}^2+\Vert \sqrt{r} \nabla \Delta p \Vert^2_{L^2} \\
\lesssim &\,(1+\Vert \nabla \alpha \Vert_{L^3}^2+\Vert \nabla r \Vert_{L^3}^2) \Vert \sqrt{b} \nabla \Delta p \Vert^2_{L^2}+\Vert \nabla g \Vert^2_{L^2}
+\varepsilon  \Vert \nabla p_{tt} \Vert_{L^2}^2.\\
\end{aligned}
\end{equation} 
Next, we multiply the time-differentiated equation \eqref{linear_West_t} by $p_{ttt}$ and we integrate over $\Om$ to find 
\begin{equation} \label{identity_p_ttt}
    \frac{b}{2} \ddt \Vert \nabla p_{tt} \Vert^2_{L^2}+\Vert \sqrt{\alpha} p_{ttt} \Vert^2_{L^2}=\intO (-\alpha_t p_{tt} +r_t \Delta p +r \Delta p_t+g_t) p_{ttt} \dx.
\end{equation}
Our goal now is to estimate the terms of the right-hand side of \eqref{identity_p_ttt}.\\
 First, we have by using H\"{o}lder's inequality  
\begin{equation}
\begin{aligned}
&\frac{b}{2} \ddt \Vert \nabla p_{tt} \Vert^2_{L^2}+\Vert \sqrt{\alpha} p_{ttt} \Vert^2_{L^2} \\
\leq &\,\Vert \alpha_t \Vert_{L^3} \Vert p_{tt} \Vert_{L^6} \Vert p_{ttt} \Vert_{L^2}+\Vert r_t \Vert_{L^3} \Vert \Delta p \Vert_{L^6} \Vert p_{ttt} \Vert_{L^2}\\
&+\Vert r \Vert_{L^\infty} \Vert \Delta p_t \Vert_{L^2} \Vert p_{ttt} \Vert_{L^2}+\Vert g_t \Vert_{L^2} \Vert p_{ttt} \Vert_{L^2}.\\    
\end{aligned}
\end{equation}
Furthermore, making use of Young's inequality together with the embedding $H^1(\Omega)\hookrightarrow L^6(\Omega)$, we obtain 
\begin{equation}
\begin{aligned}
&\frac{b}{2} \ddt \Vert \nabla p_{tt} \Vert^2_{L^2}+\Vert \sqrt{\alpha} p_{ttt} \Vert^2_{L^2} \\
\leq C&(\varepsilon) \Big(\Vert \alpha_t \Vert_{L^3}^2 \Vert \sqrt{b} \nabla p_{tt} \Vert_{L^2}^2+\Vert r_t \Vert_{L^3}^2 \Vert \sqrt{b} \nabla \Delta p \Vert_{L^2}^2+\Vert \sqrt{b} \Delta p_t \Vert_{L^2}^2+\Vert g_t \Vert_{L^2}^2\Big)\\
&+ \varepsilon \Big\Vert \frac{1}{\sqrt{\alpha}} \Big\Vert_{L^\infty}^2 \Vert \sqrt{\alpha} p_{ttt} \Vert_{L^2}^2.  
\end{aligned}
\end{equation}
By selecting  $\varepsilon$ small enough, we find  
\begin{equation} \label{eq19}
\begin{aligned}
&\frac{1}{2} \ddt \Vert \sqrt{b}\nabla p_{tt} \Vert^2_{L^2}
 +\Vert \sqrt{\alpha} p_{ttt} \Vert^2_{L^2}\\
\lesssim&\,  \Vert \alpha_t \Vert_{L^3}^2\Vert \sqrt{b}  \nabla p_{tt} \Vert_{L^2}^2+\Vert r_t \Vert_{L^3}^2\Vert \sqrt{b} \nabla \Delta p \Vert_{L^2}^2+\Vert \sqrt{b} \Delta p_t \Vert_{L^2}^2+\Vert g_t \Vert_{L^2}^2.
\end{aligned}
\end{equation}
Consequently, collecting the estimates \eqref{estimate_nabla_Delta_p} and \eqref{eq19}, we arrive at \eqref{estimate_E_3_p}.
\end{proof}

\begin{proof}[Proof of the Proposition \ref{prop2}] Collecting  the estimates \eqref{estimate_E_1_2_p} and \eqref{estimate_E_3_p} and employing \linebreak
Poincar\'{e}'s inequality to find for all $t \geq 0$
\begin{equation}
\begin{aligned}
&\ddt \mathfrak{E}[p](t)+\mathfrak{D}_0[p](t) \\
\lesssim &\,(1+\Lambda_1(t)+\Lambda_2(t))\mathfrak{E}[p](t)+\Vert \sqrt{b}\Delta p_t(t) \Vert^2_{L^2}+\varepsilon  \Vert \nabla p_{tt}(t) \Vert_{L^2}^2\\
&+ \Vert \nabla g(t) \Vert^2_{L^2}+\Vert g_t(t) \Vert^2_{L^2},
\end{aligned}
\end{equation}
where $\mathfrak{E}[p]$ and $\mathfrak{D}_0[p]$ are  defined in \eqref{Energy_Tot} and \eqref{dissipation_D_0} respectively.  So now the terms depending on $\varepsilon$ can be absorbed into $\mathfrak{D}_0[p]$ for small values of $\varepsilon$. In addition, recalling \eqref{Delta_p_t}
and using Poincar\'{e}'s inequality, we obtain the following estimate for the total acoustic energy 
\begin{equation} \label{total_energy_p_t}
\begin{aligned}
&\ddt \mathfrak{E}[p](t)+\mathfrak{D}_0[p](t) \lesssim (1+\Lambda(t))\mathfrak{E}[p](t)+ \mathfrak{F}(t),
\end{aligned}
\end{equation}
where the functions $\Lambda$ and $\mathfrak{F}$ are given in \eqref{lambda}  and \eqref{source_term}, respectively. 

Next, we multiply the  equation \eqref{linear_West_t} by $-\Delta p_{tt}$ and integrate over $\Om$, then we have
\begin{equation}
\begin{aligned}
b\Vert \Delta p_{tt} \Vert^2_{L^2}=& \intO (\alpha p_{ttt} + \alpha_t p_{tt}-  r_t \Delta p- r\Delta p_t- g_t) \Delta p_{tt} \dx.\\
\end{aligned}
\end{equation}
Using H\"{o}lder's inequality, it follows that
\begin{equation}
\begin{aligned}
b\Vert \Delta p_{tt} \Vert^2_{L^2} \leq \Vert \sqrt{\alpha} \Vert_{L^\infty}  & \Vert \sqrt{\alpha} p_{ttt} \Vert_{L^2} \Vert \Delta p_{tt} \Vert_{L^2}+\Vert \alpha_t \Vert_{L^3} \Vert p_{tt} \Vert_{L^6} \Vert \Delta p_{tt} \Vert_{L^2}\\
+\Vert r_t \Vert_{L^3}& \Vert \Delta p \Vert_{L^6} \Vert \Delta p_{tt} \Vert_{L^2}+\Vert r\Vert_{L^\infty} \Vert \Delta p_t \Vert_{L^2}\Vert \Delta p_{tt} \Vert_{L^2}\\
&+\Vert g_t \Vert_{L^2} \Vert \Delta p_{tt} \Vert_{L^2}.\\
\end{aligned}
\end{equation}
Applying Young's inequality, making use of the  continuous embedding $H^1(\Om) \hookrightarrow L^6(\Om)$, we find
\begin{equation}
\begin{aligned}
b\Vert \Delta p_{tt} \Vert^2_{L^2} \lesssim &\,  C(\varepsilon)( \Vert \sqrt{\alpha} \Vert_{L^\infty}^2\Vert \sqrt{\alpha} p_{ttt} \Vert_{L^2}^2 + \Vert \alpha_t \Vert_{L^3}^2 \Vert \sqrt{b} \nabla p_{tt} \Vert_{L^2}^2+\Vert r_t \Vert_{L^3}^2 \Vert \sqrt{b} \nabla \Delta p \Vert_{L^2}^2\\
&+ \Vert r \Vert_{L^\infty}^2\Vert \sqrt{b} \Delta  p_{t} \Vert_{L^2}^2+\Vert g_t \Vert_{L^2}^2) +\varepsilon \Vert \Delta p_{tt} \Vert_{L^2}^2.\\
\end{aligned}
\end{equation}
Fixing  $\varepsilon >0$ small enough and  keeping in mind \eqref{Delta_p_t} and Assumption \ref{Assumption_1}, we get for all $t \geq 0$
\begin{equation}\label{Delta_p_tt}
\begin{aligned}
\frac{b}{2}\Vert \Delta p_{tt}(t) \Vert^2 \lesssim &\, \Vert \sqrt{\alpha} p_{ttt}(t) \Vert_{L^2}^2+ ( \Vert \alpha_t(t) \Vert_{L^3}^2+\Vert r_t(t) \Vert_{L^3}^2) E_3[p](t)\\
&+ E_2[p](t)+\Vert g(t) \Vert_{L^2}^2+\Vert g_t(t) \Vert_{L^2}^2.\\
\end{aligned}
\end{equation}
We sum up $\gamma\times \eqref{Delta_p_tt}$ and the estimate \eqref{total_energy_p_t}, then we take $\gamma>0$ suitably small in order to absorb the first term on the right of \eqref{Delta_p_tt} by the dissipation $\mathfrak{D}_0[p]$ in \eqref{total_energy_p_t}, then, it follows that  
\begin{equation}\label{eq23}
\begin{aligned}   
\ddt \mathfrak{E}[p](t)+\mathfrak{D}_0[p](t)+ b \Vert \Delta p_{tt}(t) \Vert^2 \lesssim (1+ \Lambda(t))\mathfrak{E}[p](t)+\mathfrak{F}(t).
\end{aligned}
\end{equation}
On the other hand, by multiplying the equation \eqref{linear_West} by $\Delta^2 p_t$ and integrating over $\Omega$, we obtain
\begin{equation}
\begin{aligned}
& b  \Vert \nabla \Delta p_t \Vert^2_{L^2}=\intO (\alpha \nabla p_{tt}+p_{tt} \nabla \alpha-\Delta p \nabla r-r\nabla \Delta p-\nabla g)\cdot \nabla \Delta p_t \dx.\\
\end{aligned}
\end{equation}
Applying H\"{o}lder's inequality, we find 
\begin{equation}
\begin{aligned}
b\Vert \nabla \Delta p_t \Vert^2_{L^2} \leq \Vert &  \alpha \Vert_{L^\infty}  \Vert \nabla p_{tt} \Vert_{L^2} \Vert \nabla \Delta p_t \Vert_{L^2}+\Vert \nabla \alpha \Vert_{L^3} \Vert p_{tt} \Vert_{L^6} \Vert \nabla \Delta p_t \Vert_{L^2}\\
&+\Vert r\Vert_{L^\infty}\Vert \nabla \Delta p \Vert_{L^2}\Vert \nabla \Delta p_t \Vert_{L^2}+\Vert \nabla r \Vert_{L^3} \Vert \Delta p \Vert_{L^6} \Vert \nabla \Delta p_t \Vert_{L^2}\\
&+ \Vert \nabla g \Vert_{L^2} \Vert \nabla \Delta p_t \Vert_{L^2}.\\
\end{aligned}
\end{equation}
Furthermore, the embedding $H^1(\Om) \hookrightarrow L^6(\Om)$ together with Young's inequality and the fact that the Dirichlet--Laplacian eigenfunctions satisfy $\Delta p|_{\partial \Om}=0$ yield
\begin{equation}
\begin{aligned}
b \Vert \nabla \Delta p_t \Vert^2_{L^2} \lesssim  &\,  C(\varepsilon)( \Vert \alpha \Vert_{L^\infty}^2\Vert \sqrt{b} \nabla p_{tt} \Vert_{L^2}^2 + \Vert \nabla \alpha \Vert_{L^3}^2 \Vert \sqrt{b} \nabla p_{tt} \Vert_{L^2}^2+ \Vert r\Vert_{L^\infty}^2\Vert \sqrt{b} \nabla \Delta p \Vert^2_{L^2}\\
&+ \Vert \nabla r \Vert_{L^3}^2 \Vert \sqrt{b} \nabla \Delta p \Vert^2_{L^2}+ \Vert \nabla g \Vert^2_{L^2})+ \varepsilon \Vert \nabla \Delta p_t \Vert^2_{L^2}.\\
\end{aligned}
\end{equation}
Hence, selectiong   $\varepsilon$ as small as needed and taking into  account Assumption \ref{Assumption_1}, we infer  that
\begin{equation}\label{eq21}
\begin{aligned}
\frac{b}{2} \Vert \nabla \Delta p_t(t) \Vert^2_{L^2} \lesssim  &\, (1 + \Vert \nabla \alpha \Vert_{L^3}^2 +\Vert \nabla r \Vert_{L^3}^2 )E_3[p](t) +\Vert \nabla g(t) \Vert^2_{L^2}, \quad t \geq 0.\\
\end{aligned}
\end{equation}
Adding up the estimates \eqref{eq23} and  \eqref{eq21} gives
\begin{equation} 
\begin{aligned}    
\ddt \mathfrak{E}[p](t)+\mathfrak{D}[p](t) \lesssim (1+ \Lambda(t))\mathfrak{E}[p](t)+\mathfrak{F}(t).
\end{aligned}
\end{equation}
Consequently, by applying Gronwall's inequality cf. Lemma \ref{gronwall}, we infer that
\begin{equation}\label{eq22} 
\begin{aligned}    
\mathfrak{E}[p](t)+\int_0^t \mathfrak{D}[p](s) \ds 
\lesssim&\, \mathfrak{E}[p](0) \exp \Big(\int_0^t (1+ \Lambda(s)) \ds \Big)\\  
&+\int_0^t \mathfrak{F}(s) \exp \Big( \int_s^t (1+\Lambda (\sigma)) \textup{d} \sigma \Big) \ds.\\
\end{aligned}
\end{equation}
Furthermore, the estimates \eqref{eq22} and \eqref{Delta_p_t} yield  
\begin{equation} \label{estimate_Delta_p_t_inf}
\begin{aligned}
b\Vert \Delta p_t(t) \Vert^2_{L^2}  \lesssim &\, \mathfrak{E}[p](t)+\Vert g(t) \Vert^2_{L^2}\\
\lesssim &\,\mathfrak{E}[p](0) \exp \Big(\int_0^t (1+ \Lambda(s)) \ds \Big)\\
&+\int_0^t \mathfrak{F}(s) \exp \Big( \int_s^t (1+\Lambda (\sigma)) \textup{d} \sigma \Big) \ds.
\end{aligned}
\end{equation}
where we employed the inequality \eqref{1D_embedding} to bound from above the term $\Vert g(t) \Vert^2_{L^2}$. Then, it suffices to sum up \eqref{eq22} and \eqref{estimate_Delta_p_t_inf} in order to reach the estimate \eqref{total_energy_p} for the smooth eigenfunctions of the Dirichlet-Laplacian. Standard compactness method  allows passing to the limit; thus proving the existence of a solution $p \in X_p$  to \eqref{linear_West}. From the weak and weak-$\star$ lower semi-continuity of norms, we get that $p$ satisfies the same energy bound \eqref{total_energy_p}. The uniqueness of the solution $p$ is guaranteed, since the only solution to the homogeneous problem
\begin{equation}
\alpha(x,t) p_{tt}-r(x,t) \Delta p-b \Delta p_t=0, \quad p(x,0)=p_t(x,0)=0, \quad p|_{\partial \Om}=0
\end{equation}
is zero. Indeed, from \eqref{total_energy_p}, we have  $\mathfrak{E}[p](t)=0$ which immediately gives $p=0$. We conclude the proof by noting that $p$ belonging to $X_p$ implies that (see \cite[Chapter  1, Lemma 2.1]{L69} and \cite[Chapter 5, Theorem 2]{evans2010partial}) 
$$p \in C(0, T; H^3(\Om) \cap H^1_0(\Om)), \qquad p_t \in C(0, T; H^2(\Om) \cap H^1_0(\Om)).$$
This concludes the proof of Proposition \ref{prop2}.     
\end{proof}    
\section{Uniform local well-posedness: Proof of Theorem \ref{wellposedness_thm}} \label{sect6}
In this section we prove Theorem \ref{wellposedness_thm}. 
The proof  is accomplished using Banach's fixed point theorem. For this purpose, we recall \eqref{Functional_Spaces} and  define the ball $B$ as 
\begin{equation}
\begin{aligned}
B=\Big\{& (p^\ast, \Theta^\ast, q^\ast) \in  \mathcal{X}:  (p^\ast(0), p^\ast_t(0), \Theta^\ast(0), q^\ast(0))=(p_0, p_1, \Theta_0, q_0) \\
& \quad\Vert p^\ast \Vert_{L^\infty L^\infty} \leq \gamma < \frac{1}{2k_1}, \quad \Vert p^\ast \Vert_{X_p} \leq R_1,\quad  \Vert (\Theta^\ast, q^\ast) \Vert_{ X_\Theta \times X_q} \leq R_2\Big\},   
\end{aligned}
\end{equation}
where $\mathcal{X}:=X_p \times X_\Theta \times X_q$. Note that we have two different radii. The reason behind this will be made clear in the proof below, as we will need to impose a smallness condition on $R_1>0$ but not on $R_2>0$.

To check that the ball $B$ is a non-empty subset of $\mathcal{X}$, one can take for instance (see \cite{Nikolic_2022}) $\alpha=r=1$ and $f=g=0$ in \eqref{linear_West} and \eqref{eq_Cattaneo}. Then, based on the estimates \eqref{total_energy_p}, \eqref{Pro_1_Estimate} and \eqref{q_estimate}, the solution $(p,\Theta, q) \in \mathcal{X}$ lies in $B$ if we choose $R_1, R_2$ and $\gamma$ such that 
\begin{equation}
C_T\mathfrak{E}[p](0) \leq R_1^2 \leq \gamma^2 < \frac{1}{(2 k_1)^2}, \quad C_T \big(\Vert q_0 \Vert_{H^1}^2+(1+\bar{\tau}+\bar{\tau}^2)E^{\bar{\tau}}[\Theta, q](0) \big) \leq R_2^2 .
\end{equation} 

The solution spaces $X_p$,  $X_\Theta$ and $X_q$ (defined in \eqref{Functional_Spaces}) are   endowed with the norms
\begin{equation} \label{norms}
\begin{aligned}
\Vert p \Vert_{X_p}:=&\,\Vert p\Vert_{L^\infty H^3}+ \Vert p_t\Vert_{L^\infty H^2}+\Vert \nabla \Delta p_t \Vert_{L^2 L^2}+\Vert\nabla  p_{tt} \Vert_{L^\infty L^2}\\
&+\Vert \Delta p_{tt} \Vert_{L^2 L^2}+\Vert p_{ttt}\Vert_{L^2 L^2},\\
\Vert \Theta \Vert_{X_\Theta}:=&\,\Vert \Theta \Vert_{L^\infty H^2}+\Vert  \Theta_t \Vert_{L^\infty H^1}+\Vert \Theta_{tt}\Vert_{L^\infty L^2},\\
\Vert q \Vert_{X_q}:=&\, \Vert q \Vert_{L^\infty H^1}+\sum_{k=1}^2  \Vert \partial_t^k q \Vert_{L^2L^2}.\\
\end{aligned}
\end{equation}  
Then, the product space $\mathcal{X}$ is equipped with the norm
$$\Vert(p, \Theta, q) \Vert_{\mathcal{X}}^2=\Vert p \Vert_{X_p}^2+\Vert \Theta \Vert_{X_\Theta}^2+\Vert q \Vert_{X_q}^2.$$
These norms are clearly equivalent to the energies $\mathcal{E}[\Theta]$ and $ \mathfrak{E}[p]$. We have for all $t \geq 0$
\begin{equation}    
\sup_{t \in (0, T)} \mathcal{E} [\Theta](t)+\Vert q \Vert_{L^\infty H^1}^2+ \sum_{k=1}^2 \int_0^T \Vert \partial_t^k q \Vert_{L^2}^2 \lesssim \Vert (\Theta,q) \Vert^2_{X_\Theta \times X_q}
\end{equation}
and 
\begin{equation}
\begin{aligned}
 \Vert ( \Theta,q) \Vert^2_{X_\Theta \times X_q} \lesssim \sup_{t \in (0, T)} \mathcal{E} [\Theta](t)+\Vert q \Vert_{L^\infty H^1}^2+\sum_{k=1}^2 \int_0^T \Vert \partial_t^k q \Vert_{L^2}^2.\\
\end{aligned}
\end{equation}
where $\Vert ( \Theta,q) \Vert^2_{X_\Theta \times X_q}=\Vert \Theta \Vert_{X_\Theta}^2+\Vert q \Vert_{X_q}^2$.
Furthermore, taking into account Assumption \ref{Assumption_1}, especially the boundedness of the functions $\alpha$ and $r$, we obtain
\begin{equation}\label{norm_energy}
\begin{aligned}
\Vert p \Vert_{X_p}^2
 \lesssim &\, \sup_{t \in (0,T)} \mathfrak{E}[p](t)+\sup_{t \in (0,T)} b\Vert \Delta p_t(t) \Vert_{L^2}^2+\int_0^T \Vert \sqrt{\alpha} p_{ttt} \Vert^2_{L^2} \ds\\
 &+\int_0^T b \Vert \nabla \Delta  p_{t} \Vert^2_{L^2} \ds+ \int_0^T b \Vert \Delta p_{tt} \Vert^2_{L^2} \ds\\
 \lesssim &\, \sup_{t \in (0,T)} \mathfrak{E}[p](t)+\sup_{t \in (0,T)} b\Vert \Delta p_t(t) \Vert_{L^2}+ \int_0^T \mathfrak{D}[p](s) \ds. 
\end{aligned}
\end{equation}
The inverse inequality also holds
\begin{equation}
\begin{aligned}
 \sup_{t \in (0,T)} \mathfrak{E}[p](t)+\sup_{t \in (0,T)} b\Vert \Delta p_t(t) \Vert_{L^2}^2+\int_0^T \Vert \sqrt{\alpha} p_{ttt} \Vert^2_{L^2} &\,\ds\\
+\int_0^T b \Vert \nabla \Delta  p_{t} \Vert^2_{L^2} \ds+ \int_0^T b \Vert \Delta p_{tt} \Vert^2_{L^2} \ds &\,\lesssim \Vert p \Vert_{X_p}^2.
\end{aligned} 
\end{equation}
Notice that the norm $\Vert(p, \Theta, q) \Vert_{\mathcal{X}}$ is independent of $\tau$. This plays a crucial role in the study of the limit $\tau\rightarrow 0$.

We consider the operator $\mathcal{T}$ that maps $(p^\ast, \Theta^\ast, q^\ast) \in B \subset \mathcal{X}$ to $(p, \Theta, q) \in \mathcal{X}$ the solution of the coupled problem 
\begin{equation} \label{fixed_point_sys}
\left\{ 
\begin{aligned}
&(1-2k (\Theta^\ast)p^\ast)p_{tt}-h (\Theta^\ast)\Delta p - b \Delta p_t =2 k (\Theta^\ast)(p^\ast_{t})^2,& \quad &\text{in} &\  &\Omega \times (0,T),\\
& m \Theta_t +\nabla\cdot q + \ell \Theta = \mathcal{Q}(p^\ast_t),& \quad &\text{in} &\ &\Omega \times (0,T),
\\
&\tau q_t+q+\kappaa \nabla \Theta=0, &\quad &\text{in} &\ &\Omega \times (0,T).
\end{aligned}
\right.      
\end{equation}
The existence of a unique solution of system \eqref{Main_system} is equivalent to the existence of a unique fixed point in $B$ to the mapping $\mathcal{T}$, which will be  guaranteed by Banach's fixed-point theorem. Therefore, to ensure applicability of the latter, we want to show that for $R_2$ large enough, $R_1$ small enough  and for   $\delta$ small enough, we have:
\begin{description}
\item[(i)] The mapping $\mathcal{T}:B\rightarrow B$ is well defined.

\item[(ii)] $\mathcal{T}$ is a contraction mapping.
\end{description}
This is achieved by following the lines of the analysis presented in \cite[Section 4]{Nikolic_2022}. The results are contained in the next two lemmas.


\begin{lemma} \label{lemma3} Given  $ \tau \in (0, \bar{\tau}]$. Then, for small enough $R_1$ and $\delta$, the operator $\mathcal{T}$ is self-mapping; namely $\mathcal{T}(B) \subset B$.
\end{lemma}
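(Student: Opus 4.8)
The plan is to read \eqref{fixed_point_sys} exactly as decoupled: its second and third equations form the linear Cattaneo system \eqref{eq_Cattaneo} with source $f=\mathcal{Q}(p^\ast_t)$, and its first equation is the linearized Westervelt equation \eqref{linear_West} with coefficients $\alpha=1-2k(\Theta^\ast)p^\ast$, $r=h(\Theta^\ast)$ and source $g=2k(\Theta^\ast)(p^\ast_t)^2$. Thus $(\Theta,q)$ is produced by Proposition \ref{total-energy} and $p$ by Proposition \ref{prop2}, once their hypotheses are checked for data coming from $(p^\ast,\Theta^\ast,q^\ast)\in B$. The heart of the matter is then a careful bookkeeping of how the resulting bounds depend on $R_1$, $R_2$, $\delta$, $T$ and $\bar{\tau}$, carried out in the order: fix $R_2$, then shrink $R_1$, then shrink $\delta$.

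For the temperature–flux pair, write $\mathcal{Q}(p_t^\ast)=\tfrac{2b}{\rhoa\Ca^4}(p^\ast_t)^2$ and use $H^2(\Om)\hookrightarrow L^\infty(\Om)$, $H^1(\Om)\hookrightarrow L^6(\Om)$ and Ladyzhenskaya's inequality \eqref{lady} to see that $f\in H^2(0,T;L^2(\Om))$ with $\|f\|_{H^2 L^2}^2\lesssim (1+T)\,\|p^\ast\|_{X_p}^4$; here all needed norms of $p^\ast_t,p^\ast_{tt},p^\ast_{ttt}$ are controlled by $\|p^\ast\|_{X_p}\le R_1$, while $f(0),f_t(0)$ — which enter $E^{\bar\tau}[\Theta,q](0)$ through the recursively defined $\Theta_1,\Theta_2,q_1,q_2$ — depend only on the fixed data $p_1,p_2$. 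Proposition \ref{total-energy} together with the norm equivalence for $\|(\Theta,q)\|_{X_\Theta\times X_q}$ then gives
\[
\|(\Theta,q)\|_{X_\Theta\times X_q}^2\lesssim \|q_0\|_{H^1}^2+(1+\bar\tau+\bar\tau^2)\big(E^{\bar\tau}[\Theta,q](0)+(1+T)R_1^4\big),
\]
and $\Theta(0)=\Theta_0$, $q(0)=q_0$. Imposing once and for all $R_1\le 1$, the right-hand side is bounded by a constant $C_0=C_0(T,\bar\tau,\text{data})$, and we \emph{fix} $R_2:=\sqrt{C_0}$; note that $R_2$ need not be small, which is the reason two radii appear.

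For the pressure equation, I would first verify Assumption \ref{Assumption_1}. Non-degeneracy of $\alpha$ follows from $|k(\Theta^\ast)|\le k_1$ (cf.\ (K1)) and $\|p^\ast\|_{L^\infty L^\infty}\le\gamma<\tfrac1{2k_1}$, giving $0<1-2k_1\gamma\le\alpha\le1+2k_1\gamma$; the two-sided bound on $r=h(\Theta^\ast)$ follows from (H1) and from $\|\Theta^\ast\|_{L^\infty L^\infty}\lesssim R_2$ (via $H^2\hookrightarrow L^\infty$) since $h$ is continuous. Differentiating $\alpha$ and $r$ by the chain rule and bounding $k',k'',h',h''$ on the bounded range of $\Theta^\ast$ using (H2)–(H3), (K2), one checks the remaining regularity of $\alpha,r$ and that $g\in H^1(0,T;L^2(\Om))\cap L^2(0,T;H^1_0(\Om))$ (the trace of $g$ vanishes because $p^\ast_t\in H^1_0(\Om)$). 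The key observation is that every occurrence of $p^\ast$ in $\Lambda$ of \eqref{lambda} and in $\mathfrak{F}$ of \eqref{source_term} carries at least one factor $\|p^\ast\|_{X_p}$: one obtains $\Lambda(t)\lesssim C(R_2)$ uniformly in $t$ (so $\int_0^T(1+\Lambda)\lesssim T(1+C(R_2))$, independent of $R_1$ for $R_1\le1$) and $\int_0^T\mathfrak{F}\lesssim C(R_2)R_1^4$, while $\mathfrak{E}[p](0)+b\|\Delta p_t(0)\|_{L^2}^2\lesssim C(R_2)\big(\|p_0\|_{H^3}^2+\|p_1\|_{H^2}^2+\|p_2\|_{H^1}^2\big)\lesssim C(R_2)\delta^2$. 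Feeding these into \eqref{total_energy_p} and using \eqref{norm_energy} yields
\[
\|p\|_{X_p}^2\le C_1(T,R_2)\,\delta^2+C_2(T,R_2)\,R_1^4,
\]
with $C_1,C_2$ independent of $R_1$ and of $\tau\in(0,\bar\tau]$, and $p(0)=p_0$, $p_t(0)=p_1$.

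To close, with $R_2$ now fixed choose $R_1>0$ so small that $C_2(T,R_2)R_1^2\le\tfrac12$, $\CHtwo\,R_1\le\gamma$ and $R_1\le1$; then choose $\delta>0$ so small that $C_1(T,R_2)\delta^2\le\tfrac12 R_1^2$. Then $\|p\|_{X_p}^2\le R_1^2$, $\|p\|_{L^\infty L^\infty}\le \CHtwo\|p\|_{L^\infty H^2}\le\CHtwo R_1\le\gamma$, $\|(\Theta,q)\|_{X_\Theta\times X_q}\le R_2$, and the initial conditions hold, so $(p,\Theta,q)\in B$, i.e.\ $\mathcal{T}(B)\subset B$ (uniformly in $\tau\in(0,\bar\tau]$). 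The main obstacle is exactly the dependence tracking in the third paragraph: one must make sure the Gronwall exponential $\exp(\int_0^T(1+\Lambda))$ and the prefactor of $\mathfrak{E}[p](0)$ do not involve $R_1$ — which forces the ordering "fix $R_2$, then shrink $R_1$, then shrink $\delta$" — and that the nonlinearity contributes $O(R_1^4)$ rather than $O(R_1^2)$, so it can be absorbed by $\tfrac12 R_1^2$ after the final smallness choice.
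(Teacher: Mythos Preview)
Your proposal is correct and follows essentially the same approach as the paper: identify \eqref{fixed_point_sys} with the linear problems of Propositions \ref{total-energy} and \ref{prop2}, verify Assumption \ref{Assumption_1} and the regularity of $f,g$ from membership in $B$, and then close by tracking the $R_1,R_2,\delta$ dependence in the resulting bounds. The one cosmetic difference is the ordering of parameter choices: you fix $R_2$ first (using $R_1\le1$ to make $\|f\|_{H^2L^2}^2$ data-controlled), then shrink $R_1$, then $\delta$, and you exploit that $\int_0^T(1+\Lambda)$ is bounded by $C(T,R_2)$ independently of $R_1\le1$; the paper instead writes $\|\Lambda\|_{L^1}\le C_1(T,R_1,R_2)$ and chooses $R_2$ large at the end, which amounts to the same thing since $C_1$ is monotone in $R_1$ and the $R_1^4$ in the temperature bound is harmless once $R_1\le1$.
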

\begin{proof} Given $(p^\ast, \Theta^\ast, q^\ast) \in B$. We are looking to show that $(p, \Theta, q)=\mathcal{T}(p^\ast, \Theta^\ast, q^\ast)$ the solution of \eqref{fixed_point_sys} also lies in $B$. To do this, we write  the system \eqref{fixed_point_sys} in the context of the Propositions \ref{total-energy} and  \ref{prop2}. To this end, we set     
\begin{equation}
\begin{aligned}
    &\alpha(x,t)=1-2 k (\Theta^\ast)p^\ast, \qquad r(x,t)=h (\Theta^\ast), \\
    &g(x,t)=2k (\Theta^\ast)(p^\ast_t)^2,\qquad 
    f(x,t)=\mathcal{Q}(p^\ast_t)=\frac{2b}{\rhoa \Ca^4} (p_t^\ast)^2\\
    \end{aligned}
\end{equation} 
and our goal is to prove that these functions satisfy Assumption \ref{Assumption_1}. We begin by checking the nondegeneracy condition. Indeed, we have
\begin{equation}
\Vert 2 k (\Theta^\ast)p^\ast \Vert_{L^\infty L^\infty} \leq 2 k_1 \Vert p^\ast \Vert_{L^\infty L^\infty} \leq 2k_1 \gamma,
\end{equation}
hence
\begin{equation}
0<\alpha_0=1-2k_1\gamma \leq \alpha(x,t) \leq 1+2k_1\gamma=\alpha_1.
\end{equation}
In addition, from \eqref{bound_h}, we have 
\begin{equation}
0<r_0=h_1 \leq h(\Theta^\ast),
\end{equation}
which ensures that the functions $r, \alpha$ do not degenerate.

Next, we focus on the function $\Lambda(t)$ given in \eqref{lambda}. Using H\"{o}lder's inequality, the properties of $k $ (see the assumption \eqref{properties_k}) and the embedding $H^2(\Om) \hookrightarrow L^\infty(\Om)$, we have
\begin{equation}
\begin{aligned}
    \Vert \alpha_t \Vert_{L^2 L^2}&=\Vert \partial_t(2 k (\Theta^\ast)p^\ast) \Vert_{L^2L^2}\\
    &\leq 2 \Vert  k (\Theta^\ast ) p_t^\ast \Vert_{L^2L^2} +2 \Vert k '(\Theta^\ast) \Theta^\ast_t p^\ast \Vert_{L^2L^2}\\
    & \lesssim \Vert  k (\Theta^\ast ) \Vert_{L^\infty L^\infty} \Vert p_t^\ast \Vert_{L^2L^2}+ \Vert  k '(\Theta^\ast )\Vert_{L^\infty L^\infty} \Vert \Theta^\ast_t \Vert_{L^2L^2} \Vert  p^\ast \Vert_{L^\infty L^\infty}\\
    &\lesssim k_1 \Vert p_t^\ast \Vert_{L^2L^2}+ (1+\Vert  \Theta^\ast \Vert_{L^\infty L^\infty}^{\gamma_2+1}) \Vert \Theta^\ast_t \Vert_{L^2L^2} \Vert  p^\ast \Vert_{L^\infty L^\infty}\\
    &\leq C_T (R_1+(1+R_2^{1+\gamma_2})R_1R_2).\\
    \end{aligned}
\end{equation} 
We obtained the last inequality above thanks to the embedding $L^\infty(0, T) \hookrightarrow L^2(0, T)$ and the fact that $(p^\ast, \Theta^\ast, q^\ast)  \in B$.\\   
Let $\beta=\frac{4}{4-d}$. Then, we find
\begin{equation}
\begin{aligned}
    \Vert \alpha_t \Vert_{ L^{\beta}L^2} & \leq 2 \Vert  k (\Theta^\ast ) p_t^\ast \Vert_{L^\beta L^2} +2 \Vert k '(\Theta^\ast) \Theta^\ast_t p^\ast \Vert_{L^\beta L^2}\\
    & \lesssim \Vert  k (\Theta^\ast ) \Vert_{L^\infty L^\infty} \Vert p_t^\ast \Vert_{L^\beta L^2}+ \Vert  k '(\Theta^\ast )\Vert_{L^\infty L^\infty} \Vert \Theta^\ast_t \Vert_{L^\beta L^2} \Vert  p^\ast \Vert_{L^\infty L^\infty}\\   
    &\lesssim k_1 \Vert p_t^\ast \Vert_{L^\beta L^2}+ (1+\Vert  \Theta^\ast \Vert_{L^\infty L^\infty}^{\gamma_2+1}) \Vert \Theta^\ast_t \Vert_{L^\beta L^2} \Vert  p^\ast \Vert_{L^\infty H^3}\\
    &\leq C_T (R_1 +(1+R_2^{1+\gamma_2})R_1R_2),
    \end{aligned}
\end{equation}
 where we used  the embedding $L^\infty(0,T) \hookrightarrow L^\beta(0,T)$.
Further, using the embedding $H^1(\Om) \hookrightarrow L^3(\Om)$ we can estimate $\Vert \alpha_t \Vert_{L^2 L^3}, \Vert \nabla \alpha \Vert_{L^2 L^3}$ as follows
\begin{equation}\label{alpha_t_L^3}
    \begin{aligned}
    \Vert  \alpha_t \Vert_{L^2 L^3} &\leq 2 \Vert  k (\Theta^\ast ) p_t^\ast \Vert_{L^2L^3} +2 \Vert k '(\Theta^\ast) \Theta^\ast_t p^\ast \Vert_{L^2L^3}\\
    & \lesssim \Vert  k (\Theta^\ast ) \Vert_{L^\infty L^\infty} \Vert p_t^\ast \Vert_{L^2L^3}+ \Vert  k '(\Theta^\ast )\Vert_{L^\infty L^\infty} \Vert \Theta^\ast_t \Vert_{L^2L^3} \Vert  p^\ast \Vert_{L^\infty L^\infty}\\
    &\lesssim k_1 \Vert p_t^\ast \Vert_{L^2H^1}+ (1+\Vert  \Theta^\ast \Vert_{L^\infty L^\infty}^{\gamma_2+1}) \Vert \Theta^\ast_t \Vert_{L^2H^1} \Vert  p^\ast \Vert_{L^\infty H^3},\\
    \end{aligned}
\end{equation}
and 
\begin{equation}\label{nabla_alpha_L^3}
\begin{aligned}
 \Vert \nabla \alpha \Vert_{L^2 L^3} &\leq 2 \Vert  k (\Theta^\ast ) \nabla p^\ast \Vert_{L^2L^3} +2 \Vert k '(\Theta^\ast) \nabla \Theta^\ast p^\ast \Vert_{L^2L^3}\\
  & \lesssim \Vert  k (\Theta^\ast ) \Vert_{L^\infty L^\infty} \Vert \nabla p^\ast \Vert_{L^2L^3}+ \Vert  k '(\Theta^\ast )\Vert_{L^\infty L^\infty} \Vert \nabla \Theta^\ast \Vert_{L^2L^3} \Vert  p^\ast \Vert_{L^\infty L^\infty}\\
  &\lesssim k_1 \Vert p^\ast \Vert_{L^2H^2}+ (1+\Vert  \Theta^\ast \Vert_{L^\infty L^\infty}^{\gamma_2+1}) \Vert \Theta^\ast \Vert_{L^2H^2}  \Vert  p^\ast \Vert_{L^\infty H^3}.\\
\end{aligned}
\end{equation}
Hence, it results from \eqref{alpha_t_L^3} and \eqref{nabla_alpha_L^3}  that 
\begin{equation}
\Vert  \alpha_t \Vert_{L^2 L^3}+\Vert \nabla \alpha \Vert_{L^2 L^3}\leq C_T( R_1+(1+R_2^{1+\gamma_2})R_1R_2). 
\end{equation}
Similarly, we can derive estimates for the terms of $\Lambda$ involving the function $r$. On account of the properties of the function $h $ (see \eqref{h'_assump}) and the embedding $L^\infty(0, T) \hookrightarrow L^\beta(0, T)$, we find
\begin{equation}
\begin{aligned}
    \Vert r_t\Vert_{L^\beta L^2}&=\Vert h '(\Theta^\ast) \Theta^\ast_t\Vert_{L^\beta L^2}\leq \Vert h '(\Theta^\ast) \Vert_{L^\infty L^\infty} \Vert \Theta^\ast_t\Vert_{L^\beta L^2}\\
    & \leq  C_T (1+\Vert  \Theta^\ast \Vert_{L^\infty L^\infty}^{\gamma_1+1})\Vert \Theta^\ast_t\Vert_{L^\infty L^2} \leq C_T(1+R_2^{\gamma_1+1})R_2,
    \end{aligned}
\end{equation}
and 
\begin{equation} \label{eq25} 
    \begin{aligned}
\Vert \nabla r \Vert_{L^2L^2}&=\Vert h '(\Theta^\ast) \nabla  \Theta^\ast \Vert_{L^2 L^2} \leq \Vert h '(\Theta^\ast) \Vert_{L^\infty L^\infty} \Vert \nabla \Theta^\ast \Vert_{L^2 L^2}\\
& \lesssim (1+\Vert  \Theta^\ast \Vert_{L^\infty L^\infty}^{\gamma_1+1})\Vert \Theta^\ast \Vert_{L^2 H^2} \leq C_T(1+R_2^{\gamma_1+1})R_2.
    \end{aligned}
\end{equation}
Again, using the embedding $H^1(\Om) \hookrightarrow L^3(\Om)$, we have
\begin{equation}
    \begin{aligned}
    \Vert  r_t \Vert_{L^2 L^3}&=\Vert h '(\Theta^\ast) \Theta^\ast_t\Vert_{L^2 L^3} \lesssim (1+\Vert  \Theta^\ast \Vert_{L^\infty L^\infty}^{\gamma_1+1})\Vert \Theta^\ast_t\Vert_{L^2 L^3}\\
    & \lesssim (1+\Vert  \Theta^\ast \Vert_{L^\infty L^\infty}^{\gamma_1+1})\Vert \Theta^\ast_t\Vert_{L^2 H^1}.\\
    \end{aligned}
\end{equation}
Moreover, elliptic regularity allows one to get
\begin{equation}
    \begin{aligned}
        \Vert \nabla r \Vert_{L^2 L^3} &=\Vert h '(\Theta^\ast) \nabla  \Theta^\ast \Vert_{L^2 L^3} \leq \Vert h '(\Theta^\ast) \Vert_{L^\infty L^\infty} \Vert \nabla \Theta^\ast \Vert_{L^2 L^3}\\
& \lesssim (1+\Vert  \Theta^\ast \Vert_{L^\infty L^\infty}^{\gamma_1+1})\Vert  \Theta^\ast \Vert_{L^2 H^2}.\\
    \end{aligned}
\end{equation}
Thus, it follows that
\begin{equation} 
\Vert  r_t \Vert_{L^2 L^3}+\Vert \nabla r \Vert_{L^2 L^3}\leq C_T(1+R_2^{\gamma_1+1})R_2.
\end{equation}
For convenience, we recall the definition of $\Lambda$
\begin{equation} 
\begin{aligned}
\Lambda(t)=\Vert \alpha_t(t) \Vert_{L^2}^2 &+\Vert \alpha_t(t) \Vert_{L^2}^{\frac{4}{4-d}}+\Vert r_t(t) \Vert_{L^2}^{\frac{4}{4-d}}+\Vert \nabla r(t) \Vert_{L^2}^2+\Vert  r_t(t) \Vert_{L^3}^2 \\
&+\Vert  \alpha_t(t) \Vert_{L^3}^2+ \Vert \nabla r(t) \Vert_{L^3}^2+\Vert \nabla \alpha(t) \Vert_{L^3}^2,\\
\end{aligned}
\end{equation}
so altogether the above estimates imply that
\begin{equation} \label{eq31}
\Vert \Lambda \Vert_{L^1(0,t)} \leq C_1(T, R_1, R_2).
\end{equation}
Now, we turn our attention to the source term $g$. Using H\"{o}lder's inequality, the embeddings $H^1(\Om) \hookrightarrow L^4(\Om)$, $H^1(\Om) \hookrightarrow L^6(\Om)$ and the fact that $\Vert (p^\ast_t)^2 \Vert_{ L^3}=\Vert p^\ast_t \Vert_{L^6}^2$, we have 
\begin{equation} 
\begin{aligned}
    \Vert g \Vert_{L^2 H^1} +\Vert g_t \Vert_{L^2 L^2}=&\Vert 4k (\Theta^\ast)\nabla p^\ast_t p^\ast_t +2k '(\Theta^\ast) \nabla \Theta^\ast (p^\ast_t)^2 \Vert_{L^2 L^2}\\
    &+\Vert 4 k (\Theta^\ast) p^\ast_t p^\ast_{tt}+2k '(\Theta^\ast) \Theta^\ast_t (p^\ast_t)^2 \Vert_{L^2 L^2}\\
    \lesssim &\,  \Vert  k (\Theta^\ast ) \Vert_{L^\infty L^\infty} \Vert \nabla p^\ast_t  \Vert_{L^2 L^4} \Vert p^\ast_t \Vert_{L^\infty L^4}\\
     &+\Vert k '(\Theta^\ast ) \Vert_{L^\infty L^\infty}\Vert \nabla \Theta^\ast  \Vert_{L^2 L^6} \Vert (p^\ast_t)^2 \Vert_{L^\infty L^3}\\
    &+\Vert  k (\Theta^\ast ) \Vert_{L^\infty L^\infty}  \Vert p^\ast_t \Vert_{L^\infty L^4} \Vert p^\ast_{tt} \Vert_{L^2 L^4}\\
    &+\Vert  k'(\Theta^\ast ) \Vert_{L^\infty L^\infty}\Vert  \Theta^\ast_t  \Vert_{L^2 L^6} \Vert (p^\ast_t)^2 \Vert_{L^\infty L^3}\\
     \lesssim &\, \Vert p^\ast_t  \Vert_{L^2 H^2} \Vert p^\ast_t \Vert_{L^\infty H^1} +(1+\Vert \Theta^\ast \Vert_{L^\infty L^\infty}^{\gamma_2+1}) \Vert \Theta^\ast  \Vert_{L^2 H^2} \Vert p^\ast_t \Vert_{L^\infty H^1}^2\\
    &+ \Vert p^\ast_t \Vert_{L^\infty H^1} \Vert p^\ast_{tt} \Vert_{L^2 H^1}+(1+\Vert \Theta^\ast \Vert_{L^\infty L^\infty}^{\gamma_2+1}) \Vert  \Theta^\ast_t  \Vert_{L^2 H^1} \Vert p^\ast_t \Vert_{L^\infty H^1}^2, \\
    \end{aligned}
\end{equation}
 which gives 
\begin{equation} \label{eq24}
\Vert g \Vert_{L^2 H^1} +\Vert g_t \Vert_{L^2 L^2} \leq C_TR_1^2(1+R_2+R_2^{2+\gamma_2}).
\end{equation}
Thus, we can bound from above the function $\mathfrak{F}$ defined in  \eqref{source_term} as 
\begin{equation}
    \begin{aligned}
\Vert \mathfrak{F} \Vert_{L^1(0,t)}= \int_0^t ( \Vert \nabla g \Vert_{L^2}^2+ \Vert g_t \Vert_{L^2}^2) \ds  \leq &\, \Vert g \Vert_{L^2 H^1}^2 +\Vert g_t \Vert_{L^2 L^2}^2\\
 \leq &\, C_T R_1^4(1+R_2^2(1+ R_2^{2+2\gamma_2})) . \\
 \end{aligned}
\end{equation}
Hence,  it results that
\begin{equation}
\Vert \mathfrak{F} \Vert_{L^1(0,t)} \leq R_1^4 C_2(T, R_2).
\end{equation}
Consequently, from Proposition \ref{prop2} we have the existence of a unique solution $p \in X_p$ to the first equation in \eqref{fixed_point_sys}. Moreover, since $(p^\ast, \Theta^\ast, q^\ast) \in B$, we have $f= \mathcal{Q}(p^\ast) \in H^2(0, T; L^2(\Om))$. Then, according to Proposition \ref{total-energy}, there exists a unique solution $(\Theta, q) \in X_\Theta \times X_q$ of the second and third equations in \eqref{fixed_point_sys}. That is to say, the mapping $\mathcal{T}$ is well-defined.

On account of \eqref{total_energy_p} and the fact that $\mathfrak{E}[p](0)\leq \delta$,  we obtain
\begin{equation}
\begin{aligned}
\Vert p \Vert_{X_p}^2  \lesssim &\, \sup_{t \in (0,T)} \mathfrak{E}[p](t)+\sup_{t \in (0,T)} b\Vert \Delta p_t(t) \Vert_{L^2}^2+ \int_0^T \mathfrak{D}[p](s) \ds\\
\lesssim &\,  \delta \exp(T (1+C_1(T, R_1, R_2)))+ T R_1^4\exp(T (1+C_1(T, R_1, R_2)))C_2(T, R_2).\\
\end{aligned}
\end{equation}
Thus, by choosing $\delta$ and $R_1$ small enough, we get
\begin{equation}
\Vert p \Vert_{X_p}  \leq R_1.
\end{equation}
Also, observing that  
\begin{equation}
\Vert p \Vert_{L^\infty L^\infty} \lesssim \Vert \Delta p \Vert_{L^\infty L^2} \lesssim \Vert p \Vert_{X_p},
\end{equation}
we obtain  an upper bound $\gamma < \dfrac{1}{2k_1}$ for $\Vert p \Vert_{L^\infty L^\infty}$ by possibly reducing $R_1$.\\
Therefore, it remains to verify that $\Vert (\Theta, q) \Vert_{X_\Theta\times X_q}\leq R_2$. First, we have by employing H\"{o}lder's inequality 
\begin{equation}
\begin{aligned}
 \Vert f \Vert_{H^2L^2}^2
\lesssim &\, \Vert (p_t^\ast)^2 \Vert_{L^2L^2}^2+\Vert  2 p_t^\ast p_{tt}^\ast \Vert_{L^2L^2}^2+\Vert 2 (p_{tt}^\ast)^2+2p_t^\ast p_{ttt}^\ast \Vert_{L^2L^2}^2\\
\lesssim &\, \Vert p_t^\ast \Vert_{L^\infty L^4}^2\Vert p_t^\ast \Vert_{L^2L^4}^2+\Vert   p_t^\ast \Vert_{L^\infty L^4}^2 \Vert p_{tt}^\ast\Vert_{L^2L^4}^2\\
&+\Vert p_{tt}^\ast \Vert_{L^\infty L^4}^2 \Vert p_{tt}^\ast \Vert_{L^2L^4}^2+ \Vert p_t^\ast \Vert_{L^\infty L^\infty}^2 \Vert p_{ttt}^\ast \Vert_{L^2L^2}^2.
\end{aligned}
\end{equation}
Thanks to the embeddings $H^1(\Om) \hookrightarrow L^4(\Om), L^\infty(0, T) \hookrightarrow L^2(0, T)$, we find
\begin{equation}
\Vert f \Vert_{H^2L^2}^2 \leq C_T\Vert p \Vert_{X_p}^4\leq   C_T R_1^4.
\end{equation}   
Then, according to Proposition \ref{total-energy} and particularly the estimates \eqref{Pro_1_Estimate} and \eqref{q_estimate}, we get 
\begin{equation} 
\begin{aligned}
\Vert (\Theta, q) \Vert_{X_\Theta \times X_q}^2 \lesssim C_T \Big(\Vert q_0 \Vert^2_{H^1}+ (1+ \bar{\tau} +\bar{\tau}^2) (E^{\bar{\tau}}[\Theta, q](0)+\Vert f \Vert_{H^2L^2}^2) \Big).\\
\end{aligned}
\end{equation}
We emphasize that the constant $C_T>0$ in this  inequality does not depend on the parameter $\tau$. So it suffices to take $R_2$ large enough such that
\begin{equation}
 C_T\Big(\Vert q_0 \Vert^2_{H^1}+(1+\bar{\tau}+\bar{\tau}^2)( E^{\bar{\tau}}[\Theta, q](0)+ R_1^4)\Big) \leq R_2^2,
\end{equation}
in order to conclude that the solution $(p, \Theta, q )$ of the system \eqref{fixed_point_sys} remains in $B$.    
\end{proof}

\begin{lemma} \label{lemma4}
Let  $ \tau \in (0, \bar{\tau}]$. If $R_1$ and $\delta$ are sufficiently small, then the mapping $\mathcal{T}$ is a contraction on $B$.
\end{lemma}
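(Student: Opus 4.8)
The plan is to establish the contraction estimate directly in the norm of $\mathcal{X}$; this is feasible because every term generated by differencing the nonlinear system \eqref{fixed_point_sys} carries an overall factor of $R_1$, and the smallness of $R_1$ has not yet been used up. Fix two points $(p_i^\ast,\Theta_i^\ast,q_i^\ast)\in B$, $i=1,2$, let $(p_i,\Theta_i,q_i)=\mathcal{T}(p_i^\ast,\Theta_i^\ast,q_i^\ast)$, and write $\bar p=p_1-p_2$, $\bar\Theta=\Theta_1-\Theta_2$, $\bar q=q_1-q_2$, $\bar p^\ast=p_1^\ast-p_2^\ast$, $\bar\Theta^\ast=\Theta_1^\ast-\Theta_2^\ast$. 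Subtracting the two copies of \eqref{fixed_point_sys}, the pair $(\bar\Theta,\bar q)$ solves the linear Cattaneo system \eqref{eq_Cattaneo} with right-hand side
\[
\bar f=\mathcal{Q}(p_{1,t}^\ast)-\mathcal{Q}(p_{2,t}^\ast)=\frac{2b}{\rhoa\Ca^4}\,(p_{1,t}^\ast+p_{2,t}^\ast)\,\bar p_t^\ast
\]
and zero initial data ($\bar q_0=0$), while $\bar p$ solves the linearized Westervelt equation \eqref{linear_West} with coefficients $\alpha=1-2k(\Theta_1^\ast)p_1^\ast$ and $r=h(\Theta_1^\ast)$ — precisely the coefficients shown in the proof of Lemma \ref{lemma3} to satisfy Assumption \ref{Assumption_1}, with $\|\Lambda\|_{L^1(0,t)}\le C_1(T,R_1,R_2)$ — and with source
\[
\begin{aligned}
\bar g ={}& 2\big(k(\Theta_1^\ast)p_1^\ast-k(\Theta_2^\ast)p_2^\ast\big)p_{2,tt}+\big(h(\Theta_1^\ast)-h(\Theta_2^\ast)\big)\Delta p_2\\
&{}+2k(\Theta_1^\ast)\big((p_{1,t}^\ast)^2-(p_{2,t}^\ast)^2\big)+2\big(k(\Theta_1^\ast)-k(\Theta_2^\ast)\big)(p_{2,t}^\ast)^2 .
\end{aligned}
\]
The data of $\bar p$ also vanish: $\bar p(0)=\bar p_t(0)=0$ because the two images share $(p_0,p_1)$, and $\bar p_{tt}(0)=0$ since $\bar g(0)=0$ while $\alpha(0)=1-2k(\Theta_0)p_0$ is non-degenerate; hence $\mathfrak{E}[\bar p](0)=0$, $b\|\Delta\bar p_t(0)\|_{L^2}^2=0$ and $E^{\bar{\tau}}[\bar\Theta,\bar q](0)=0$.

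The core of the argument is to bound $\bar g$ in $L^2(0,T;H^1_0(\Om))\cap H^1(0,T;L^2(\Om))$ and $\bar f$ in $H^2(0,T;L^2(\Om))$. Telescoping each difference of products, e.g.
\[
k(\Theta_1^\ast)p_1^\ast-k(\Theta_2^\ast)p_2^\ast=k(\Theta_1^\ast)\bar p^\ast+\big(k(\Theta_1^\ast)-k(\Theta_2^\ast)\big)p_2^\ast,\qquad (p_{1,t}^\ast)^2-(p_{2,t}^\ast)^2=(p_{1,t}^\ast+p_{2,t}^\ast)\bar p_t^\ast,
\]
and similarly after one space or time derivative, and using the mean value theorem together with the growth bounds \eqref{h''_assump}, \eqref{h'_assump}, \eqref{properties_k} to control $k(\Theta_1^\ast)-k(\Theta_2^\ast)$, $h(\Theta_1^\ast)-h(\Theta_2^\ast)$, $k'(\Theta_1^\ast)-k'(\Theta_2^\ast)$ and $h'(\Theta_1^\ast)-h'(\Theta_2^\ast)$ pointwise by $|\bar\Theta^\ast|$ times a polynomial in $\|\Theta_i^\ast\|_{L^\infty L^\infty}\le CR_2$, one checks that \emph{every} resulting summand is a product of a ``difference'' factor ($\bar p^\ast$, $\nabla\bar p^\ast$, $\bar p_t^\ast$, $\bar p_{tt}^\ast$, $\bar p_{ttt}^\ast$, $\bar\Theta^\ast$, $\nabla\bar\Theta^\ast$, $\bar\Theta_t^\ast$) with a factor belonging to the pressure component of $B$ ($p_{2,tt}$, $\nabla\Delta p_2$, $p_{1,t}^\ast+p_{2,t}^\ast$, $(p_{2,t}^\ast)^2$, $p_{1,ttt}^\ast$, \dots) that is consequently $O(R_1)$ in the norm in play. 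Running the same Hölder, Ladyzhenskaya and Sobolev ($H^1\hookrightarrow L^{3},L^{4},L^{6}$, $H^2\hookrightarrow L^\infty$) estimates as in the proof of Lemma \ref{lemma3}, one obtains, for $R_1\le 1$,
\[
\|\bar g\|_{L^2 H^1}+\|\bar g_t\|_{L^2 L^2}\le R_1\,K_1(T,R_2)\,\|(\bar p^\ast,\bar\Theta^\ast,\bar q^\ast)\|_{\mathcal{X}},\qquad \|\bar f\|_{H^2 L^2}\le R_1\,K_2(T,R_2)\,\|\bar p^\ast\|_{X_p},
\]
with $K_1,K_2$ independent of $R_1\in(0,1]$ (the $R_1$-dependence hidden inside $C_1$ is absorbed by $R_1\le 1$). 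I expect this bookkeeping — extracting the $R_1$ prefactor from \emph{each} term, not merely a bounded constant — to be the main obstacle, because the Gronwall exponential $\exp\!\big(T(1+C_1(T,R_1,R_2))\big)$ arising below does not shrink as $R_1\to 0$, so the contraction must come entirely from this prefactor.

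Finally, since the data of $\bar p$ and of $(\bar\Theta,\bar q)$ vanish, Proposition \ref{prop2} (via \eqref{total_energy_p}) and Proposition \ref{total-energy} (via \eqref{Pro_1_Estimate}, \eqref{q_estimate}), together with the equivalence of the $X_p$- and $(X_\Theta\times X_q)$-norms with the corresponding energies (cf. \eqref{norm_energy}), give
\[
\|\bar p\|_{X_p}^2\lesssim \exp\!\big(T(1+C_1(T,R_1,R_2))\big)\big(\|\bar g\|_{L^2 H^1}^2+\|\bar g_t\|_{L^2 L^2}^2\big),\qquad \|(\bar\Theta,\bar q)\|_{X_\Theta\times X_q}^2\lesssim\|\bar f\|_{H^2 L^2}^2 .
\]
Substituting the source bounds of the previous paragraph yields
\[
\|\mathcal{T}(p_1^\ast,\Theta_1^\ast,q_1^\ast)-\mathcal{T}(p_2^\ast,\Theta_2^\ast,q_2^\ast)\|_{\mathcal{X}}^2\le R_1^2\,C(T,R_2)\,\|(p_1^\ast,\Theta_1^\ast,q_1^\ast)-(p_2^\ast,\Theta_2^\ast,q_2^\ast)\|_{\mathcal{X}}^2,
\]
with $C(T,R_2)$ independent of $R_1\in(0,1]$ and of $\tau\in(0,\bar{\tau}]$. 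Keeping $R_2$ and $\delta$ as fixed in Lemma \ref{lemma3} and then shrinking $R_1$ so that $R_1^2 C(T,R_2)\le\tfrac14$, we conclude that $\mathcal{T}$ is a contraction on $B$. Combined with Lemma \ref{lemma3} and the completeness of $B$ in the $\mathcal{X}$-norm (all defining constraints, including the prescribed traces at $t=0$, are closed in $\mathcal{X}$ since $X_p\hookrightarrow C([0,T];H^3\cap H^1_0(\Om))$ and $p_t\in C([0,T];H^2\cap H^1_0(\Om))$), Banach's fixed-point theorem produces a unique fixed point of $\mathcal{T}$ in $B$, i.e. the unique solution $(p,\Theta,q)\in\mathcal{X}$ of \eqref{Main_system}; since $\|\cdot\|_{\mathcal{X}}$ does not involve $\tau$, all constants are uniform in $\tau\in(0,\bar{\tau}]$, completing the proof of Theorem \ref{wellposedness_thm}.
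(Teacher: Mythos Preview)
Your proposal is correct and follows essentially the same route as the paper: you form the difference system with coefficients $\alpha=1-2k(\Theta_1^\ast)p_1^\ast$, $r=h(\Theta_1^\ast)$, telescope the source terms, control $k(\Theta_1^\ast)-k(\Theta_2^\ast)$, $h(\Theta_1^\ast)-h(\Theta_2^\ast)$ and their derivatives via the mean value theorem and \eqref{h''_assump}--\eqref{properties_k}, estimate $\bar g$ in $L^2H^1\cap H^1L^2$ and $\bar f$ in $H^2L^2$ with an explicit $R_1$ prefactor, and then feed these into Propositions~\ref{total-energy} and~\ref{prop2} with zero initial data to close the contraction. Your observation that the Gronwall constant can be made independent of $R_1\in(0,1]$ is a slight sharpening of the paper's bookkeeping, where the final constant is written as $C(T,R_1,R_2,\bar\tau)$.
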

\begin{proof}
Let $(p^\ast_1, \Theta^\ast_1,q_1^\ast), (p^\ast_2, \Theta^\ast_2, q_2^\ast) \in B$ and let $(p_1, \Theta_1, q_1), (p_2, \Theta_2, q_2)$ stand for their corresponding images by the operator $\mathcal{T}$; that is,  
\begin{equation}   
\mathcal{T}(p^\ast_1, \Theta^\ast_1,q_1^\ast)=(p_1, \Theta_1,q_1)\quad \text{and}\quad  \mathcal{T}(p^\ast_2, \Theta^\ast_2, q_2^\ast)=(p_2, \Theta_2, q_2).
\end{equation}
Since $(p_1, \Theta_1, q_1), (p_2, \Theta_2, q_2)$ are both solutions to system \eqref{fixed_point_sys}, clearly the differences
\begin{equation}
\begin{aligned}
&\hat{p}=p_1-p_2, \quad \hat{\Theta}=\Theta_1-\Theta_2, \quad  \hat{q}=q_1-q_2,\\
&\hat{p}^\ast=p_1^\ast-p_2^\ast, \quad \hat{\Theta}^\ast=\Theta_1^\ast-\Theta_2^\ast, \quad \hat{q}^\ast=q_1^\ast-q_2^\ast\\
\end{aligned}
\end{equation}
solve the following system
\begin{equation} \label{contraction_sys}
\left\{ 
\begin{aligned}
&(1-2k (\Theta^\ast_1)p_1^\ast)\hat{p}_{tt}-h (\Theta_1^\ast)\Delta \hat{p} - b \Delta \hat{p}_t = g_1, & \qquad &\\
& m \hat{\Theta}_t +\nabla\cdot \hat{q} + \ell \hat{\Theta} = f_1,
\\    
&\tau \hat{q}_t+\hat{q}+\kappaa \nabla \hat{\Theta}=0, \\ 
\end{aligned}
\text{in}\quad  \Omega \times (0,T),
\right.
\end{equation}
with the initial and boundary conditions
\begin{equation}
\begin{aligned}
&\hat{p}(x,0)=\hat{p}_t(x,0)=\hat{\Theta}(x,0)=0, \quad &\hat{q}(x,0)=0,  &\quad \text{in} \quad \ \Omega. \\
&\hat{p}=\hat{\Theta}=0, &\qquad &\text{on}\quad   \partial\Omega \times (0,T). 
\end{aligned}
\end{equation}
The forcing terms $f_1$ and $g_1$ are given by
\begin{equation}\label{g_1}
\begin{aligned}
f_1=&\,\mathcal{Q}(p_{1t}^\ast)-\mathcal{Q}(p_{2t}^\ast),\\
g_1=&\,2(k (\Theta_1^\ast)p_1^\ast-k (\Theta_2^\ast)p_2^\ast)p_{2tt}+(h (\Theta_1^\ast)-h (\Theta_2^\ast))\Delta p_2+2k (\Theta_1^\ast)(p^\ast_{1t})^2-2k (\Theta_2^\ast)(p^\ast_{2t})^2\\
=& \,2(k (\Theta_1^\ast)-k (\Theta_2^\ast))(p_2^\ast p_{2tt}+(p^\ast_{2t})^2)+(h (\Theta_1^\ast)-h (\Theta_2^\ast))\Delta p_2\\
&+2k (\Theta_1^\ast)\big((p^\ast_{1t}+p^\ast_{2t})\hat{p}^\ast_t+\hat{p}^\ast p_{2tt}\big)\\
:=&\,g_{11}+g_{12}+g_{13}.
\end{aligned}
\end{equation} 
   
We start by recalling some estimates derived in \cite{Nikolic_2022} on the functions $k , h $ and their derivatives. We have
\begin{equation}
k (\Theta_1^\ast)-k (\Theta_2^\ast)=(\Theta_1^\ast-\Theta_2^\ast) \int_0^1 k '(\Theta_2^\ast+\sigma(\Theta_1^\ast-\Theta_2^\ast) ) \textup{d} \sigma;
\end{equation}
hence using \eqref{properties_k},  together with the Sobolev embedding $H^2\hookrightarrow L^\infty$,  we get
\begin{subequations}   
\begin{equation}\label{k}
\begin{aligned}
\Vert k (\Theta_1^\ast)-k (\Theta_2^\ast) \Vert_{L^\infty L^\infty}&=\Big\Vert (\Theta_1^\ast-\Theta_2^\ast) \int_0^1 k '(\Theta_2^\ast+\sigma(\Theta_1^\ast-\Theta_2^\ast) ) \textup{d} \sigma \Big\Vert_{L^\infty L^\infty}\\
& \lesssim \Vert \Theta_1^\ast-\Theta_2^\ast \Vert_{L^\infty L^\infty} \Big(1+\Vert \Theta_2^\ast+\sigma(\Theta_1^\ast-\Theta_2^\ast) \Vert_{L^\infty L^\infty}^{1+\gamma_2} \Big)\\
& \lesssim \Vert (\hat{p}^\ast, \hat{\Theta}^\ast, \hat{q}^\ast) \Vert_{\mathcal{X}} \Big( 1+ \Vert \Theta_1^\ast \Vert_{L^\infty L^\infty}^{1+\gamma_2}+\Vert \Theta_2^\ast \Vert_{L^\infty L^\infty}^{1+\gamma_2}\Big).\\
\end{aligned}
\end{equation}
In a similar fashion, on account of \eqref{h''_assump}, \eqref{h'_assump} and \eqref{properties_k}, we can show that 
\begin{equation}\label{k'}
\Vert k '(\Theta_1^\ast)-k '(\Theta_2^\ast) \Vert_{L^\infty L^\infty}\lesssim \Vert (\hat{p}^\ast, \hat{\Theta}^\ast, \hat{q}^\ast) \Vert_{\mathcal{X}} \Big( 1+ \Vert \Theta_1^\ast \Vert_{L^\infty L^\infty}^{\gamma_2}+\Vert \Theta_2^\ast \Vert_{L^\infty L^\infty}^{\gamma_2}\Big);
\end{equation}
\end{subequations}
\begin{subequations}
\begin{equation}\label{h}
\begin{aligned}
\Vert h (\Theta_1^\ast)-h (\Theta_2^\ast) \Vert_{L^\infty L^\infty} \lesssim  \Vert (\hat{p}^\ast, \hat{\Theta}^\ast, \hat{q}^\ast) \Vert_{\mathcal{X}} \Big( 1+ \Vert \Theta_1^\ast \Vert_{L^\infty L^\infty}^{1+\gamma_1}+\Vert \Theta_2^\ast \Vert_{L^\infty L^\infty}^{1+\gamma_1}\Big);\\
\end{aligned}
\end{equation}
\begin{equation}\label{h'}
\begin{aligned}
\Vert h '(\Theta_1^\ast)-h '(\Theta_2^\ast) \Vert_{L^\infty L^\infty} \lesssim \Vert (\hat{p}^\ast, \hat{\Theta}^\ast, \hat{q}^\ast) \Vert_{\mathcal{X}} \Big( 1+ \Vert \Theta_1^\ast \Vert_{L^\infty L^\infty}^{\gamma_1}+\Vert \Theta_2^\ast \Vert_{L^\infty L^\infty}^{\gamma_1}\Big).\\
\end{aligned}
\end{equation}
\end{subequations}
These estimates will be repeatedly used in the course of this proof.\\
First, we focus on estimating the $L^2$-norm of the gradient of $g_1$. Recall that 
\begin{equation}
g_{11}=2(k (\Theta_1^\ast)-k (\Theta_2^\ast))(p_2^\ast p_{2tt}+(p^\ast_{2t})^2).
\end{equation}
This entails that  
\begin{equation}
\begin{aligned}
  \nabla g_{11} =&\, 2  \nabla (k (\Theta_1^\ast)-k (\Theta_2^\ast))(p_2^\ast p_{2tt}+(p^\ast_{2t})^2)\\
  & +2 (k (\Theta_1^\ast)-k (\Theta_2^\ast))\nabla (p_2^\ast p_{2tt}+(p^\ast_{2t})^2) . 
\end{aligned}
\end{equation}
Since the heat flux $q$ is not present in the expression of the source term $g_1$; we can proceed as in \cite[Section 4]{Nikolic_2022} to estimate $\Vert \nabla g_{11} \Vert_{L^2 L^2}$. The fact that the operator $\mathcal{T}$ is self-mapping along with the estimate 
\begin{equation}
\Vert \Theta_1^\ast-\Theta_2^\ast \Vert_{X_\Theta}^2+\Vert p_1^\ast-p_2^\ast \Vert_{X_p}^2 \leq \Vert (\hat{p}^\ast, \hat{\Theta}^\ast, \hat{q}^\ast) \Vert_{\mathcal{X}}^2,
\end{equation}
allows us to obtain the following bound
\begin{equation}\label{g_11_Est}
\begin{aligned}\Vert \nabla g_{11} \Vert_{L^2 L^2} \leq C_T R_1^2(1+R_2+R_2^{1+\gamma_2}) \Vert (\hat{p}^\ast, \hat{\Theta}^\ast, \hat{q}^\ast) \Vert_{\mathcal{X}}.
\end{aligned}
\end{equation}
Analogously, we can follow the arguments in \cite{Nikolic_2022} in order to deal with the gradient of the second contribution 
\begin{equation}
\nabla g_{12}=\nabla (h (\Theta_1^\ast)-h (\Theta_2^\ast))\Delta p_2+h (\Theta_1^\ast)-h (\Theta_2^\ast))\nabla \Delta p_2.
\end{equation}
In fact, using the assumption \eqref{h'_assump} on the function $h $ together with  \eqref{h} and \eqref{h'}, we find
\begin{equation}
\Vert \nabla g_{12} \Vert_{L^2 L^2} \leq C_T R_1 (1+R_2+R_2^{1+\gamma_1}) \Vert (\hat{p}^\ast, \hat{\Theta}^\ast, \hat{q}^\ast) \Vert_{\mathcal{X}}.
\end{equation}
Thus, it remains to bound the gradient of the last component of $g_1$
\begin{equation}
\nabla g_{13}=2\big( \nabla ( k (\Theta_1^\ast))\big((p^\ast_{1t}+p^\ast_{2t}) \hat{p}^\ast_t+ \hat{p}^\ast p_{2tt}\big)+k (\Theta_1^\ast)\nabla \big((p^\ast_{1t}+p^\ast_{2t}) \hat{p}^\ast_t+ \hat{p}^\ast p_{2tt}\big) \big).
\end{equation}
Again, we can do as in \cite{Nikolic_2022} to get
\begin{equation}
\Vert \nabla g_{13} \Vert_{L^2 L^2} \leq C_T R_1(1+ R_2+R_2^{\gamma_2+2}) \Vert (\hat{p}^\ast, \hat{\Theta}^\ast, \hat{q}^\ast) \Vert_{\mathcal{X}}.
\end{equation}
Collecting the estimates of the components of $g_1$, it follows that
\begin{equation} \label{eq30}
\Vert \nabla g_{1} \Vert_{L^2 L^2} \leq R_1 C(T, R_1, R_2) \Vert (\hat{p}^\ast, \hat{\Theta}^\ast, \hat{q}^\ast) \Vert_{\mathcal{X}}.
\end{equation}

Next, we seek a similar estimate for the derivative in time of $g_1$. Differentiating $g_{11}$ with respect to $t$, we have
\begin{equation}
\begin{aligned}
  \partial_t g_{11} =& 2  \Big(\partial_t (k (\Theta_1^\ast)-k (\Theta_2^\ast))(p_2^\ast p_{2tt}+(p^\ast_{2t})^2) + (k (\Theta_1^\ast)-k (\Theta_2^\ast))\partial_t (p_2^\ast p_{2tt}+(p^\ast_{2t})^2)\Big)\\
  =&  2\Big(\big(k' (\Theta_1^\ast)\hat{\Theta}^\ast_t+(k' (\Theta_1^\ast)-k' (\Theta_2^\ast))\Theta_{2t}^\ast\big)(p_2^\ast p_{2tt}+(p^\ast_{2t})^2)\\
  &+(k (\Theta_1^\ast)-k (\Theta_2^\ast))(p_{2t}^\ast p_{2tt}+p_2^\ast p_{2ttt}+2p^\ast_{2t} p^\ast_{2tt}) \Big). 
\end{aligned}
\end{equation}
Using H\"older's inequality, we obtain
\begin{equation}
\begin{aligned}
\Vert \partial_t g_{11} \Vert_{L^2 L^2} \lesssim &\, \big(\Vert k' (\Theta_1^\ast) \Vert_{L^\infty L^\infty}  \Vert  \hat{\Theta}^\ast_t \Vert_{L^2 L^4}+\Vert k' (\Theta_1^\ast)-k' (\Theta_2^\ast) \Vert_{L^\infty L^\infty} \Vert \Theta_{2t}^\ast \Vert_{L^2 L^4} \big)\\
& \times \big(\Vert p_2^\ast \Vert_{L^\infty L^\infty} \Vert p_{2tt} \Vert_{L^\infty L^4}+\Vert p_{2t}^\ast \Vert_{L^\infty L^\infty} \Vert p_{2t}^\ast \Vert_{L^\infty L^4}\big)\\
&+ \Vert k (\Theta_1^\ast)-k (\Theta_2^\ast)\Vert_{L^\infty L^\infty} \big( \Vert p_{2t}^\ast \Vert_{L^\infty L^4} \Vert p_{2tt}\Vert_{L^2 L^4} +\Vert p_2^\ast\Vert_{L^\infty L^\infty} \Vert p_{2ttt} \Vert_{L^2 L^2}\\
&+ \Vert p^\ast_{2t}\Vert_{L^\infty L^4} \Vert  p^\ast_{2tt} \Vert_{L^2 L^4}\big)
\end{aligned}
\end{equation}
Then taking into account \eqref{properties_k}, \eqref{k}, \eqref{k'} and using the embedding $H^1(\Om) \hookrightarrow L^4(\Om)$, it follows
\begin{equation}
\Vert \partial_t g_{11} \Vert_{L^2 L^2}  \leq C_T R_1^2(1+ R_2+R_2^{1+\gamma_2}) \Vert (\hat{p}^\ast, \hat{\Theta}^\ast, \hat{q}^\ast) \Vert_{\mathcal{X}}.
\end{equation}
The time derivative of $g_{12}$ is given by
\begin{equation}
\begin{aligned}
\partial_t g_{12} =& \partial_t (h (\Theta_1^\ast)-h (\Theta_2^\ast))\Delta p_2+(h (\Theta_1^\ast)-h (\Theta_2^\ast)) \Delta p_{2t},\\
=& \big(h '(\Theta_1^\ast) \hat{\Theta}^\ast_t+(h' (\Theta_1^\ast)-h' (\Theta_2^\ast))\Theta_{2t}^\ast \big)\Delta p_2+(h (\Theta_1^\ast)-h (\Theta_2^\ast)) \Delta p_{2t}.
\end{aligned}
\end{equation}
Hence, it holds that
\begin{equation}
\begin{aligned}
\Vert \partial_t g_{11} \Vert_{L^2 L^2}  \lesssim &\, \big(\Vert h '(\Theta_1^\ast)\Vert_{L^\infty L^\infty}  \Vert  \hat{\Theta}^\ast_t \Vert_{L^2 L^4} +\Vert h' (\Theta_1^\ast)-h' (\Theta_2^\ast)\Vert_{L^\infty L^\infty}  \Vert \Theta_{2t}^\ast \Vert_{L^2 L^4}   \big)\Vert \Delta p_2 \Vert_{L^\infty L^4} \\
&+\Vert h (\Theta_1^\ast)-h (\Theta_2^\ast)\Vert_{L^\infty L^\infty}  \Vert \Delta p_{2t} \Vert_{L^2 L^2}. 
\end{aligned}
\end{equation}
Thanks to \eqref{h'_assump}, \eqref{h} and \eqref{h'}, the above estimate implies that
\begin{equation}
\Vert \partial_t g_{12} \Vert_{L^2 L^2}  \leq C_T R_1(1+ R_2+R_2^{1+\gamma_1}) \Vert (\hat{p}^\ast, \hat{\Theta}^\ast, \hat{q}^\ast) \Vert_{\mathcal{X}}.
\end{equation}
Differentiating in time the last contribution $g_{13}$, we find
\begin{equation}
\begin{aligned}
 \partial_t g_{13}=&2\big(   k' (\Theta_1^\ast)\Theta_{1t}^\ast\big((p^\ast_{1t}+p^\ast_{2t}) \hat{p}^\ast_t+ \hat{p}^\ast p_{2tt}\big)\\
 &+k (\Theta_1^\ast) \big((p^\ast_{1tt}+p^\ast_{2tt}) \hat{p}^\ast_t+(p^\ast_{1t}+p^\ast_{2t}) \hat{p}^\ast_{tt}+ \hat{p}^\ast_t p_{2tt}+\hat{p}^\ast p_{2ttt}\big) \big).\\
\end{aligned}
\end{equation}
Applying H\"older's inequality yields
\begin{equation}
\begin{aligned}     
&\Vert \partial_t g_{13} \Vert_{L^2 L^2}\\
 \lesssim &\,  \Vert  k' (\Theta_1^\ast)\Vert_{L^\infty L^\infty} \Vert \Theta_{1t}^\ast \Vert_{L^2 L^4} \big(\Vert p^\ast_{1t}+p^\ast_{2t}\Vert_{L^\infty L^4} \Vert \hat{p}^\ast_t \Vert_{L^\infty L^\infty}+ \Vert \hat{p}^\ast \Vert_{L^\infty L^\infty} \Vert p_{2tt} \Vert_{L^\infty L^4} \big)\\
&+\Vert k (\Theta_1^\ast)\Vert_{L^\infty L^\infty} \big(\Vert p^\ast_{1tt}+p^\ast_{2tt}\Vert_{L^2 L^4} \Vert \hat{p}^\ast_t \Vert_{L^\infty L^4}+ \Vert p^\ast_{1t}+p^\ast_{2t} \Vert_{L^\infty L^4} \Vert \hat{p}^\ast_{tt} \Vert_{L^2 L^4}\\
&+ \Vert\hat{p}^\ast_t \Vert_{L^\infty L^4} \Vert p_{2tt} \Vert_{L^2 L^4} + \Vert \hat{p}^\ast \Vert_{L^\infty L^\infty} \Vert  p_{2ttt} \Vert_{L^2 L^2} \big).
\end{aligned}
\end{equation}
Then, from the assumption \eqref{properties_k} and the fact that  the mapping $\mathcal{T}$ is self-mapping, we arrive at
\begin{equation}
\Vert \partial_t g_{13} \Vert_{L^2 L^2} \leq C_T R_1( 1+ R_2+R_2^{2+\gamma_2}) \Vert (\hat{p}^\ast, \hat{\Theta}^\ast, \hat{q}^\ast) \Vert_{\mathcal{X}}.
\end{equation}
Putting together the above estimates results in 
\begin{equation}\label{eq29}
\Vert \partial_t g_{1} \Vert_{L^2 L^2} \leq R_1 C(T, R_1, R_2)\Vert (\hat{p}^\ast, \hat{\Theta}^\ast, \hat{q}^\ast) \Vert_{\mathcal{X}}.
\end{equation}
As for the source term $f_1$ in the temperature equation, we can handle it as follows
\begin{equation}
\begin{aligned}
\Vert f_1 \Vert_{H^2 L^2}=&\,\Vert f_1 \Vert_{L^2L^2}+\Vert f_{1t} \Vert_{L^2L^2}+\Vert f_{1tt} \Vert_{L^2L^2}\\
\lesssim &\,\Vert (p_{1t}^\ast)^2-(p_{2t}^\ast)^2 \Vert_{L^2L^2}+ \Vert (p_{1tt}^\ast-p_{2tt}^\ast)(p_{1t}^\ast) \Vert_{L^2L^2} +\Vert(p_{1t}^\ast-p_{2t}^\ast)(p_{2tt}^\ast) \Vert_{L^2L^2}\\    
&+\Vert (p_{1ttt}^\ast-p_{2ttt}^\ast)p_{1t}^\ast  \Vert_{L^2L^2}+\Vert (p_{1tt}^\ast-p_{2tt}^\ast)p_{1tt}^\ast  \Vert_{L^2L^2}\\
&+\Vert (p_{1tt}^\ast-p_{2tt}^\ast)p_{2tt}^\ast \Vert_{L^2L^2}
+\Vert(p_{1t}^\ast-p_{2t}^\ast)p_{2ttt}^\ast \Vert_{L^2L^2}.
\end{aligned}     
\end{equation}
Thus, we get
\begin{equation}
\begin{aligned}
\Vert f_1 \Vert_{H^2 L^2} \lesssim &\, \Vert p_{1t}^\ast-p_{2t}^\ast \Vert_{L^\infty L^\infty }\Vert p_{1t}^\ast+p_{2t}^\ast \Vert_{L^2L^2}+ \Vert p_{1tt}^\ast-p_{2tt}^\ast \Vert_{L^2 L^4} \Vert p_{1t}^\ast \Vert_{L^\infty L^4}\\
&+\Vert p_{1t}^\ast-p_{2t}^\ast \Vert_{L^\infty L^4} \Vert p_{2tt}^\ast\Vert_{L^2 L^4} +\Vert p_{1ttt}^\ast-p_{2ttt}^\ast \Vert_{L^2L^2} \Vert p_{1t}^\ast \Vert_{L^\infty L^\infty}\\
&+\Vert p_{1tt}^\ast-p_{2tt}^\ast \Vert_{L^\infty L^4} \Vert p_{1tt}^\ast \Vert_{L^2L^4}+\Vert p_{1tt}^\ast-p_{2tt}^\ast \Vert_{L^\infty L^4} \Vert p_{2tt}^\ast \Vert_{L^2L^4}\\
&+\Vert p_{1t}^\ast-p_{2t}^\ast \Vert_{L^\infty L^\infty} \Vert p_{2ttt}^\ast \Vert_{L^2L^2}.\\
\end{aligned}
\end{equation}
Using the embeddings $H^1(\Om) \hookrightarrow L^4(\Om)$,  $H^2(\Om) \hookrightarrow L^\infty(\Om)$ and elliptic regularity, we deduce that 
\begin{equation} \label{eq28} 
\Vert f_1 \Vert_{H^2 L^2} \leq R_1 C_T \Vert (\hat{p}^\ast, \hat{\Theta}^\ast, \hat{q}^\ast) \Vert_{\mathcal{X}}.
\end{equation}
From the previous Lemma \ref{lemma3}, we know that the coefficients of the first equation in the system \eqref{contraction_sys} 
$$\alpha_1=1-2 k (\Theta_1^\ast)p_1^\ast, \qquad r_1=h (\Theta_1^\ast)$$
satisfy the requirements of Proposition \ref{prop2}. Further, setting $g=g_1$ and $f=f_1$ and taking into account the above estimates, we deduce that the results of the Propositions \ref{total-energy},  \ref{prop2} hold. In particular, the energy estimates \eqref{Pro_1_Estimate}, \eqref{q_estimate} and \eqref{total_energy_p} hold for the solution $(\hat{p}, \hat{\Theta}, \hat{q})$; that is
\begin{equation}
\begin{aligned}
\Vert (\hat{p}, \hat{\Theta}, \hat{q}) \Vert_{\mathcal{X}}^2=&\Vert \mathcal{T}(p^\ast_1, \Theta^\ast_1, q^\ast_1)-\mathcal{T}(p^\ast_2, \Theta^\ast_2, q^\ast_2) \Vert_{ \mathcal{X}}^2\\
\lesssim &\, \sup_{t \in (0,T)} \mathfrak{E}[\hat{p}](t)+\sup_{t \in (0,T)} b \Vert \Delta \hat{p}_t(t) \Vert_{L^2}^2+\int_0^T \mathfrak{D}[\hat{p}](s) \ds\\
&+ \sup_{t \in (0,T)} \mathcal{E}[\hat{\Theta}](t) + \Vert \hat{q} \Vert_{L^\infty H^1}^2+\sum_{k=1}^2 \int_0^T \Vert \partial_t^k \hat{q}(s) \Vert_{L^2}^2 \ds\\
\lesssim &\, \exp \big(\int_0^T(1+\Lambda(s)) \ds\big) \int_0^T (\Vert \nabla g_1\Vert_{L^2}^2 + \Vert g_{1t} \Vert_{L^2}^2) \ds \\
&+(1+ \bar{\tau} +\bar{\tau}^2)\Vert f_1 \Vert_{H^2L^2}^2.\\
\end{aligned}
\end{equation}
Hence, due to estimates \eqref{eq31}, \eqref{eq30}, \eqref{eq29} and \eqref{eq28}, we find    
\begin{equation}
\begin{aligned}  
\Vert (\hat{p}, \hat{\Theta}, \hat{q}) \Vert_{\mathcal{X}}^2 \lesssim &\, R_1^2 C(T, R_1, R_2)\exp (T +T C_1(T, R_1, R_2))    \big\Vert (\hat{p}^\ast, \hat{\Theta}^\ast, \hat{q}^\ast) \big\Vert_{\mathcal{X}}^2 \\
&+(1+ \bar{\tau} +\bar{\tau}^2)R_1^2 C_T^2\Vert (\hat{p}^\ast, \hat{\Theta}^\ast, \hat{q}^\ast) \Vert_{\mathcal{X}}^2,\\
\end{aligned}   
\end{equation}    
where $C_1(T, R_1, R_2)$ is the constant in \eqref{eq31}. From here, we infer that
\begin{equation}
\Vert \mathcal{T}(p^\ast_1, \Theta^\ast_1, q^\ast_1)-\mathcal{T}(p^\ast_2, \Theta^\ast_2, q^\ast_2) \Vert_{ \mathcal{X}} \lesssim R_1 C(T, R_1, R_2,\bar{\tau}) \Vert (\hat{p}^\ast, \hat{\Theta}^\ast, \hat{q}^\ast) \Vert_{\mathcal{X}}. 
\end{equation}
Consequently, it suffices to select $R_1$ as small as needed to ensure that the mapping $\mathcal{T}$ is a strict contraction. 
\end{proof}

\section{Limiting behaviour  as $\tau \searrow 0$: Proof of Theorem \ref{limit_thm}} \label{sect7}
We aim in this section to prove Theorem \ref{limit_thm}.   More precisely, we prove  that the solution of the Westervelt--Pennes--Cattaneo  system  \eqref{Main_system} converges as $\tau$ goes to zero, to the solution of the limiting system, namely the  Westervelt--Pennes--Fourier system  corresponding to $\tau=0$. For the sake of simplicity and to be able to relate it to the  Westervelt--Pennes--Fourier  problem analyzed in \cite{Nikolic_2022}, we replace the system \eqref{eq_Cattaneo} by the equivalent telegraph equation derived in \eqref{telegraph_eq}. Let $(p^\tau, \Theta^\tau)$ be the solution  of  the following $\tau$-dependent problem
\begin{equation} \label{tau_system}
\left\{
\begin{aligned}
&(1-2k(\Theta^\tau)p^\tau) p_{tt}^\tau-h(\Theta^\tau) \Delta p^\tau -b \Delta p_t^\tau=2k(\Theta^\tau) (p_t^\tau)^2, &\ &\text{in} \ \Om \times (0, T), \\
& \tau m \Theta_{tt}^\tau+(m+\tau \ell)\Theta_t^\tau+\ell \Theta^\tau-\kappaa \Delta \Theta^\tau=\mathcal{Q}(p_t^\tau)+\tau \partial_t\big(\mathcal{Q}(p_t^\tau)\big), &\ &\text{in} \ \Om \times (0, T),
\end{aligned}
\right.
\end{equation} 
subject to homogeneous Dirichlet boundary conditions \eqref{homog_dirichlet} and initial conditions
\begin{equation}
(p^\tau, p_t^\tau)|_{t=0}=(p_0^\tau, p_1^\tau), \quad (\Theta^\tau, \Theta_t^\tau)|_{t=0}=(\Theta_0^\tau, \Theta_1^\tau)
\end{equation}
with $\Theta_1^\tau=\frac{1}{m}\big(-\nabla \cdot q^\tau_0- \ell \Theta_0^\tau+ \mathcal{Q}(p_1^\tau) \big)$.

The proof presented below is based on the method employed in \cite{Kaltenbacher2019TheJE,nikolic2023nonlinear} to examine the weak limit of the Jordan–Moore–Gibson–Thompson-type equations.   

\subsection{Proof of Theorem \ref{limit_thm}}

According to Theorem \ref{wellposedness_thm}, we know that if the initial data $p_0^\tau, p_1^\tau$, $\Theta_0^\tau, \Theta_1^\tau$ satisfies
\begin{equation}
\begin{aligned}
&\mathfrak{E}[p](0) \leq \delta, \\
& \mathcal{E}[\Theta](0) \leq 
C\Big(\Vert q_0 \Vert^2_{H^1}+(1+ \bar{\tau} +\bar{\tau}^2)\big(E^{\bar{\tau}}[\Theta, q](0)+R_1^2 \big) \Big) \leq R_2^2,
\end{aligned} 
\end{equation}
then the system \eqref{tau_system} admits a unique solution verifying
\begin{equation}
\Vert p^\tau \Vert_{X_p} \leq R_1, \quad  \Vert \Theta^\tau \Vert_{X_\Theta} \leq R_2.
\end{equation}
Since the radii $R_1, R_2$ and $\delta$ are independent of $\tau$, we can find subsequences such that
\begin{equation} \label{initial-weak-conv}
\begin{aligned}
 &(p_0^\tau, p_1^\tau) \rightharpoonup (p_0, p_1) & \quad & \text{weakly in} \ & [H^3(\Om)& \cap H^1_0(\Om)] \times [H^3(\Om) \cap H^1_0(\Om)],\\  
 &(\Theta_0^\tau, \Theta_1^\tau) \rightharpoonup (\Theta_0, \Theta_1) & \quad & \text{weakly in} \ & [H^2(\Om)& \cap H^1_0(\Om)] \times H^1_0(\Om).
\end{aligned}
\end{equation}    
as $\tau$ tends to zero. Further, owing to the compactness of the embedding $H^{s+1} \hookrightarrow H^s, s \in  \mathbb{N}$,   it follows that there exists  strongly convergent subsequences still denoted by $(p_0^\tau, p_1^\tau), (\Theta_0^\tau, \Theta_1^\tau)$,  such that 
\begin{equation} \label{initial-str-conv_1}
\begin{aligned}
&(p_0^\tau, p_1^\tau) \rightarrow (p_0, p_1) & \quad & \text{in} \ & [H^2(&\Om) \cap H^1_0(\Om)] \times  [H^2(\Om) \cap H^1_0(\Om)],\\  
 &(\Theta_0^\tau, \Theta_1^\tau) \rightarrow (\Theta_0, \Theta_1) & \quad & \text{in} \ & H^1_0(&\Om) \times L^2(\Om).
\end{aligned}
\end{equation} Similarly, since the solution $(p^\tau, \Theta^\tau)$ is bounded uniformly in   $\tau$, there exists subsequences (which we keep on denoting by the index $\tau$) converging in the weak-$\star$ topology
\begin{equation} \label{weak_star_conv}
\begin{aligned}
&p^\tau \rightharpoonup p &\quad &\text{weakly-$\star$} \ &\text{in} \quad &L^\infty (0, T; H^3(\Om) \cap H^1_0(\Om));\\
&p^\tau_t \rightharpoonup p_t & \quad &\text{weakly-$\star$} \ &\text{in} \quad  &L^\infty  (0, T; H^2(\Om) \cap H^1_0(\Om));\\ 
&p^\tau_{tt} \rightharpoonup p_{tt} & \quad & \text{weakly-$\star$} \ &\text{in} \quad  &L^\infty  (0, T;H^1_0(\Om));\\
&p^\tau_{ttt} \rightharpoonup p_{ttt} & \quad & \text{weakly} \ &\text{in} \quad  &L^2  (0, T;L^2(\Om));\\  
&\Theta^\tau \rightharpoonup \Theta & \quad & \text{weakly-$\star$} \ &\text{in} \quad  &L^\infty  (0, T; H^2(\Om) \cap H^1_0(\Om));\\ 
&\Theta^\tau_t \rightharpoonup \Theta_t & \quad & \text{weakly-$\star$} \ &\text{in} \quad  &L^\infty  (0, T; H^1_0(\Om));\\    
&\Theta^\tau_{tt} \rightharpoonup \Theta_{tt} & \quad & \text{weakly-$\star$} \quad  &\text{in} \quad  &L^\infty  (0, T;  L^2(\Om)). 
\end{aligned}
\end{equation}
Next, we seek to connect the limit of the initial data sequence found in \eqref{initial-str-conv_1} to the initial state of the limit functions in \eqref{weak_star_conv}. For this purpose, we invoke Aubin--Lions lemma \cite[Lemma 1.2]{L69}, which we can apply here thanks to the compactness of the embeddings $H^{s+1} \hookrightarrow H^s, s \in  \mathbb{N}$. Then, we get (up to subsequences)
\begin{equation} \label{cont_strong_conv}
\begin{aligned}
&p^\tau \rightarrow p & \quad &\text{strongly} \ \text{in} &\ &C(0, T; H^2(\Om) \cap H^1_0(\Om));\\
&p^\tau_t \rightarrow p_t &\quad &\text{strongly} \ \text{in} &\ &C(0, T; H^1_0(\Om));\\ 
&p^\tau_{tt} \rightarrow p_{tt} &\quad &\text{strongly} \ \text{in} &\ &L^2 (0, T;L^2(\Om));\\ 
&\Theta^\tau \rightarrow \Theta &\quad &\text{strongly} \ \text{in} &\ &C(0, T; H^1_0(\Om));\\ 
&\Theta^\tau_t \rightarrow \Theta_t &\quad &\text{strongly} \ \text{in} &\ &C(0, T; L^2(\Om)).\\ 
\end{aligned}
\end{equation}
This also leads to strong convergence of the initial data as follows 
\begin{equation} \label{initial-str-conv_2}
\begin{aligned}
&(p^\tau,p_t^\tau)(0) \rightarrow (p,p_t)(0) &\quad &\text{in} &\ &[H^2(\Om) \cap H^1_0(\Om)] \times H^1_0(\Om);\\
&(\Theta^\tau,\Theta_t^\tau)(0) \rightarrow (\Theta,\Theta_t)(0) &\quad &\text{in} &\ &H^1_0(\Om) \times L^2(\Om).
\end{aligned} 
\end{equation}
Hence, the uniqueness of the limit along with \eqref{initial-str-conv_1}, \eqref{initial-str-conv_2} and \eqref{initial-weak-conv} gives
\begin{equation}
\begin{aligned}
& (p,p_t)(0)=(p_0, p_1) \in \, [H^3(\Om) \cap H^1_0(\Om)] \times [H^3(\Om) \cap H^1_0(\Om)],\\
&(\Theta,\Theta_t)(0)=(\Theta_0, \Theta_1) \in \, [H^2(\Om) \cap H^1_0(\Om)] \times H^1_0(\Om).
\end{aligned}
\end{equation} 
This means that the initial states of the limit functions in \eqref{weak_star_conv} are well-defined  and have the necessary smoothness. Therefore, we can now focus on the task of passing to the limit in system \eqref{tau_system} when $\tau$ goes to zero. \\
Let $v \in L^1(0, T; L^2(\Om))$. We denote by $(\hat{p}, \hat{\Theta})$ the differences $\hat{p}=p-p^\tau, \hat{\Theta}=\Theta-\Theta^\tau$. Then, we have
\begin{equation} \label{eq_theta_limit}
\begin{aligned}
&\int^T_0 \intO \big(m \Theta_t -\kappaa \Delta \Theta +\ell \Theta -\mathcal{Q}(p_t)\big)v \dx \dt\\
=&\int^T_0 \intO \big( m \hat{\Theta}_t- \kappaa \Delta \hat{\Theta} + \ell \hat{\Theta} - \tau m \Theta_{tt}^\tau -\tau  \ell \Theta^\tau_t+\mathcal{Q}(p_t^\tau)-\mathcal{Q}(p_t)+\tau \partial_t \mathcal{Q}(p_t^\tau) \big)v \dx \dt.
\end{aligned}
\end{equation}
From the weak-$\star$ convergence \eqref{weak_star_conv}, we get
\begin{equation}
\int^T_0 \intO \big( m \hat{\Theta}_t- \kappaa \Delta \hat{\Theta} + \ell \hat{\Theta}\big)v \dx \dt \longrightarrow 0 \quad \text{as} \ \tau \rightarrow 0.
\end{equation}
Moreover, recalling that $\mathcal{Q}(p_t)=\frac{2b}{\rhoa \Ca^4}(p_t)^2$ and keeping in mind the regularity provided by the fact that $(p^\tau, \Theta^\tau) \in X_p \times X_\Theta$, we obtain 
\begin{equation}
\begin{aligned}
&\int^T_0 \intO \big(- \tau m \Theta_{tt}^\tau -\tau  \ell \Theta^\tau_t+\frac{4b\tau}{\rhoa \Ca^4}p_t^\tau p_{tt}^\tau \big)v \dx \dt \\
\lesssim &\, \tau \big( \Vert \Theta_{tt}^\tau \Vert_{L^\infty L^2}+\Vert \Theta_{t}^\tau \Vert_{L^\infty L^2}+\Vert p_t^\tau \Vert_{L^\infty L^4} \Vert p_{tt}^\tau \Vert_{L^\infty L^4} \big) \Vert v \Vert_{L^1 L^2} \longrightarrow 0
\end{aligned}
\end{equation}
as $\tau$ tends to zero. The two remaining terms on the right-hand side of \eqref{eq_theta_limit} can be treated as follows
\begin{equation}
\begin{aligned}
\int^T_0 \intO \big(\mathcal{Q}(p_t^\tau)-\mathcal{Q}(p_t)\big)v \dx \dt&=-\int^T_0 \intO \frac{2b}{\rhoa \Ca^4} (p_t^\tau+p_t)\hat{p}_t v \dx \dt\\
& \lesssim \Vert (p_t^\tau+p_t) \Vert_{L^\infty L^4} \Vert \hat{p}_t \Vert_{L^\infty L^4} \Vert v \Vert_{L^1 L^2}.
\end{aligned}
\end{equation}
Then, the embedding $H^1(\Om) \hookrightarrow L^4(\Om)$ together with the boundedness of $p^\tau_t, p_t $ and the convergence \eqref{cont_strong_conv} yields
\begin{equation}
\int^T_0 \intO \big(\mathcal{Q}(p_t^\tau)-\mathcal{Q}(p_t)\big)v \dx \dt \longrightarrow 0 \quad \text{as} \ \tau \rightarrow 0.
\end{equation}
Consequently, we infer that $(p,\Theta)$ satisfies in $L^\infty(0, T; L^2(\Om))$ the equation 
\begin{equation}
m \Theta_t -\kappaa \Delta \Theta +\ell \Theta =\mathcal{Q}(p_t).
\end{equation}

Next, we claim that for all $v \in L^1(0, T; L^2(\Om))$, it holds that
\begin{equation}
\int^T_0 \intO \Big((1-2k(\Theta)p) p_{tt}-h(\Theta) \Delta p -b \Delta p_t-2k(\Theta) (p_t)^2 \Big) v \dx \dt \longrightarrow 0 \quad \text{as} \ \tau \rightarrow 0.
\end{equation}
To see this, we first observe that
\begin{equation} \label{eq_p_limit}
\begin{aligned}
&\int^T_0 \intO \Big((1-2k(\Theta)p) p_{tt}-h(\Theta) \Delta p -b \Delta p_t-2k(\Theta) (p_t)^2\Big) v \dx \dt\\
=& \int^T_0 \intO \Big((1-2k(\Theta)p) \hat{p}_{tt}+(1-2k(\Theta)p) p_{tt}^\tau-h(\Theta) \Delta \hat{p}-h(\Theta) \Delta p^\tau\\
&-b \Delta \hat{p}_t-b \Delta p_t^\tau-2k(\Theta) ((p_t)^2-(p_t^\tau)^2)-2k(\Theta) (p_t^\tau)^2\Big) v \dx \dt\\
=& \int^T_0 \intO \Big((1-2k(\Theta)p) \hat{p}_{tt}-h(\Theta) \Delta \hat{p}-b \Delta \hat{p}_t+(1-2k(\Theta^\tau)p^\tau) p_{tt}^\tau+2\big(k(\Theta^\tau)p^\tau-k(\Theta)p\big)p_{tt}^\tau\\
&-(h(\Theta)-h(\Theta^\tau)) \Delta p^\tau-h(\Theta^\tau) \Delta p^\tau-b \Delta p_t^\tau-2k(\Theta) (p_t+p_t^\tau)\hat{p}_t-2k(\Theta) (p_t^\tau)^2\Big) v \dx \dt\\
=&\int^T_0 \intO  \Big((1-2k(\Theta)p) \hat{p}_{tt}-h(\Theta) \Delta \hat{p}-b \Delta \hat{p}_t-2k(\Theta^\tau)\hat{p}p_{tt}^\tau+2\big(k(\Theta^\tau)-k(\Theta)\big)pp_{tt}^\tau\\
&-(h(\Theta)-h(\Theta^\tau)) \Delta p^\tau-2k(\Theta) (p_t+p_t^\tau)\hat{p}_t+2\big(k(\Theta^\tau)-k(\Theta)\big) (p_t^\tau)^2\Big) v \dx \dt.
\end{aligned}
\end{equation}
We want to prove that the integral on the right-hand side goes to zero as the relaxation time vanishes. Again, the weak-$\star$ convergence of $p^\tau$ to $p$ in \eqref{weak_star_conv} along with the boundedness of $(p,\Theta) \in X_p \times X_\Theta$ implies that
\begin{equation}
\begin{aligned}
&h(\Theta ) \Delta p^\tau \rightharpoonup h(\Theta) \Delta p &\quad &\text{weakly-$\star$} \ &\text{in} \ &L^\infty (0, T; H^1_0(\Om));\\
&((1-2k(\Theta)p) p^\tau_{tt} \rightharpoonup ((1-2k(\Theta)p) p_{tt} & \quad & \text{weakly-$\star$} \ &\text{in} \ &L^\infty  (0, T;H^1_0(\Om)).\\ 
\end{aligned} 
\end{equation}
Then, we can conclude that
\begin{equation}
\int^T_0 \intO  \Big((1-2k(\Theta)p) \hat{p}_{tt}-h(\Theta) \Delta \hat{p}-b \Delta \hat{p}_t\Big) v \dx \dt \longrightarrow 0 \quad \text{as} \ \tau \rightarrow 0.
\end{equation} 
Further, relying on \eqref{cont_strong_conv}, we have 
\begin{equation}
\begin{aligned}
&\int^T_0 \intO  \Big(-2k(\Theta^\tau)\hat{p}p_{tt}^\tau-2k(\Theta) (p_t+p_t^\tau)\hat{p}_t \Big) v \dx\dt\\
\lesssim &\, \Big( \Vert k(\Theta^\tau) \Vert_{L^\infty L^\infty}  \Vert \hat{p} \Vert_{L^\infty L^4}  \Vert p_{tt}^\tau \Vert_{L^\infty L^4}+ \Vert k(\Theta) \Vert_{L^\infty  L^\infty} \Vert p_t+p_t^\tau \Vert_{L^\infty L^4} \Vert \hat{p}_t \Vert_{L^\infty L^4}  \Big) \Vert v  \Vert_{L^1L^2},
\end{aligned}
\end{equation}
which yields that
\begin{equation}
\int^T_0 \intO  \Big(-2k(\Theta^\tau)\hat{p}p_{tt}^\tau-2k(\Theta) (p_t+p_t^\tau)\hat{p}_t \Big) v \dx\dt \longrightarrow 0 \quad \text{as} \ \tau \rightarrow 0.
\end{equation}
In order to handle the terms involving the difference $k(\Theta^\tau)-k(\Theta)$ or $h(\Theta^\tau)-h(\Theta)$, we call upon analogous estimates to \eqref{k}, \eqref{h}. Since we can show as in \eqref{k} that 
\begin{equation}
\Vert k(\Theta^\tau)-k(\Theta) \Vert_{L^\infty L^4} \lesssim \Vert \Theta^\tau-\Theta \Vert_{L^\infty L^4} \Big(1+\Vert \Theta^\tau \Vert^{1+\gamma_2}_{L^\infty L^4}+\Vert \Theta \Vert^{1+\gamma_2}_{L^\infty L^4}\Big),
\end{equation}
the boundedness of $\Theta, \Theta^\tau$ in $X_\Theta$ allows to obtain that
\begin{equation}
\begin{aligned}
&\int^T_0 \intO  2 \Big(\big(k(\Theta^\tau)-k(\Theta)\big)pp_{tt}^\tau+\big(k(\Theta^\tau)-k(\Theta)\big) (p_t^\tau)^2\Big) v \dx \dt\\
\leq &\, 2 \Big(\Vert p \Vert_{L^\infty L^\infty} \Vert p_{tt}^\tau \Vert_{L^\infty L^4} +\Vert p^\tau_t \Vert_{L^\infty L^4}^2 \Big) \Vert k(\Theta^\tau)-k(\Theta) \Vert_{L^\infty L^4} \Vert v \Vert_{L^1 L^2}  \rightarrow 0 \quad \text{as} \ \tau \rightarrow 0.
\end{aligned}
\end{equation}
Likewise, we have
\begin{equation} \label{Ineq}
\int^T_0 \intO   \big(h(\Theta^\tau)-h(\Theta)\big) \Delta p^\tau v \dx \dt \lesssim \Vert h(\Theta^\tau)-h(\Theta) \Vert_{L^\infty L^4} \Vert \Delta p^\tau \Vert_{L^\infty L^4} \Vert v \Vert_{L^1 L^2}. 
\end{equation}
Then the following estimate
\begin{equation}
\Vert h(\Theta^\tau)-h(\Theta) \Vert_{L^\infty L^4} \lesssim \Vert \Theta^\tau-\Theta \Vert_{L^\infty L^4} \Big(1+\Vert \Theta^\tau \Vert^{1+\gamma_1}_{L^\infty L^4}+\Vert \Theta \Vert^{1+\gamma_1}_{L^\infty L^4}\Big),
\end{equation}
leads to the convergence of the integral on the left of \eqref{Ineq} to zero as $\tau$ goes to zero. Thus, all the terms on the right-hand side of \eqref{eq_p_limit} vanish as $\tau$ tends to zero, then we infer that the limit $p$ satisfies the equation
\begin{equation}
(1-2k(\Theta)p) p_{tt}-h(\Theta) \Delta p -b \Delta p_t=2k(\Theta) (p_t)^2,
\end{equation}
which is understood in $L^\infty(0, T; L^2(\Om))$.

We conclude that there exists a subsequence converging in the sense of \eqref{initial-weak-conv}, \eqref{weak_star_conv} whose limit $(p, \Theta)$ is a solution to the limiting system \eqref{limiting_system}. In addition, since we have more regularity than that provided by the well-posedness result in \cite{Nikolic_2022} especially in terms of the derivatives in time $p_t, p_{tt}, \Theta_t, \Theta_{tt}$, the uniqueness of the solution obtained in \cite{Nikolic_2022} extends to the functional setting at hand. Thus, we can confirm that the whole sequence $(p^\tau, \Theta^\tau)_{\tau \in (0,\bar{\tau}]}$ converges to the solution of the parabolic system \eqref{limiting_system} based on \cite[Proposition 10.13]{Zeidler1986}.


\end{document}